\definecolor{red}{RGB}{255,0,0}
\definecolor{green}{RGB}{0,100,0}
\definecolor{blue}{RGB}{0,0,255}
\newtheorem{theorem}{Theorem}[section]
\newtheorem{thmx}{Theorem}
\newtheorem{lemma}[theorem]{Lemma}
\newtheorem{corollary}[theorem]{Corollary}
\newtheorem{proposition}[theorem]{Proposition}
\theoremstyle{remark}
\newtheorem{remark}[theorem]{Remark}
\newtheorem{definition}[theorem]{Definition}
\newtheorem{example}[theorem]{Example}
\renewcommand{\Re}{\mathop{\rm Re}}
\renewcommand{\Im}{\mathop{\rm Im}}
\newcommand{\dil}[1]{\text{Dil}_{#1}}
\DeclareMathOperator{\supp}{supp}
\renewcommand{\P}{\mathbb{P}}
\newcommand{\R}{\mathbb{R}}
\newcommand{\C}{\mathbb{C}}
\newcommand{\Z}{\mathbb{Z}}
\newcommand{\N}{\mathbb{N}}
\newcommand{\isdef}{\overset{\mathrm{def}}{=\joinrel=}}
\newcommand{\monicpolreal}{\mathbb P_n^*(\mathbb R)}
\newcommand{\cc}{\mathbb{C}}
\newcommand{\pp}{\mathbb{P}}
\newcommand{\rr}{\mathbb{R}}
\newcommand{\MM}{\mathcal{M}}
\newcommand{\RR}{\mathcal{R}}
\renewcommand{\SS}{\mathcal{S}}
\newcommand{\raising}[2]{\left(#1\right)^{\overline{#2}}}
\newcommand{\falling}[2]{\left(#1\right)^{\underline{#2}}}
\newcommand*\pFqN[6][8]{%
  \begingroup % only local assignments
  \pFqmuskip=#1mu\relax
  % make the comma math active
  \mathcode`\,=\string"8000
  % and define it to be \pFqcomma
  \begingroup\lccode`\~=`\,
  \lowercase{\endgroup\let~}\pFqcomma
  % typeset the formula
  {}_{#2}F_{#3}{\left(\genfrac..{0pt}{}{#4}{#5};#6\right)}%
  \endgroup
}
\newcommand{\pFqcomma}{\mskip\pFqmuskip}
\newcommand*\HGF[5]{%
   % typeset the formula
 {\ }_{#1} F_{#2}{\left(\genfrac..{0pt}{}{#3}{#4};#5\right)}%
}
\title[Zeros via finite free convolution and MOP]{Zeros of generalized hypergeometric polynomials via finite free convolution. Applications to multiple orthogonality} 
\author[A. Mart\'{\i}nez-Finkelshtein]{Andrei Mart\'{\i}nez-Finkelshtein}
\address[AMF]{Department of Mathematics, Baylor University, TX, USA, and Department of Mathematics, University of Almer\'{\i}a, Spain}
\email{a\_martinez-finkelshtein@baylor.edu}
\author[R.~Morales]{Rafael Morales}
\address[RM]{Department of Mathematics, Baylor University, TX, USA}
\email{rafael\_morales2@baylor.edu}
\author[D.~Perales]{Daniel Perales}
\address[DP]{Department of Mathematics, Texas A\&M University, TX, USA}
\email{daniel.perales@tamu.edu}
\date{\today}
\keywords{Hypergeometric polynomials; Finite free convolution; Free probability; Multiple ortogonal polynomials; Zero asymptotics}
\subjclass[2020]{Primary:  33C45; Secondary: 33C20, 42C05, 46L54}
\begin{document}

\begin{abstract}
We address the problem of the weak asymptotic behavior of zeros of families of generalized hypergeometric polynomials as their degree tends to infinity. The main tool is the representation of such polynomials as a finite free convolution of simpler elements; this representation is preserved in the asymptotic regime, so we can formally write the limit zero distribution of these polynomials as a free convolution of explicitly computable measures. We derive a simple expression for the $S$-transform of the limit distribution, which turns out to be a rational function, and a representation of the Kampé de Fériet polynomials in terms of finite free convolutions. 

We apply these tools, as well as those from \cite{martinez2023real}, to the study of some well-known families of multiple orthogonal polynomials (Jacobi-Piñeiro and multiple Laguerre of the first and second kinds), obtaining results on their zeros, such as interlacing, monotonicity, and asymptotics.
\end{abstract}

\maketitle

\tableofcontents

\section{Introduction}

The definition of the generalized hypergeometric function ${ }_{i} F_j$ with $i$ numerator and $j$ denominator parameters is well known. If one of the numerator parameters is equal to a negative integer, say $ -n$, with $n \in \mathbb{N}$, then the series terminates and is a polynomial of degree $\le n$. If all its zeros are real, we usually want to know their properties, such as positivity/negativity, interlacing, and monotonicity with respect to the parameters. In the case of a sequence of such polynomials, enumerated by their degree $n$, we also want to investigate their asymptotic behavior as $n\to \infty$.

For small values of $i$ and $j$, the answer to these questions can usually be obtained by exploiting their connection to some classical families of polynomials, in many cases orthogonal, or using their other properties, such as the differential equation or integral representation. But when $i, j\ge 2$, the problem becomes more difficult due to the limited number of tools that allow us to investigate their behavior.  

The finite free convolutions (that in this paper come in two flavors, multiplicative $\boxtimes_n$ and additive $\boxplus_n$), are binary operations on polynomials, studied already by Szeg\H{o}, Schur, Walsh, and others, although under different names. They behave especially well when applied to real-rooted polynomials, preserving zero interlacing and monotonicity. Recently, such convolutions have been rediscovered as expected characteristic polynomials of a multiplication (or addition) of random matrices \cite{MR4408504}, and were also interpreted as finite analogs of the free probability \cite{marcus} (thus, named generically as finite free convolution of polynomials). 

The connection between these polynomial convolutions and free probability is revealed in the asymptotic regime, when we consider the zero-counting measure (also known in this context as the empirical root distribution) of a sequence of polynomials whose degree tends to infinity. Then the finite free convolution of polynomials turns into a free convolution of limiting distributions of their zeros \cite{arizmendi2021finite, arizmendi2018cumulants}.

In a recent paper \cite{martinez2023real}, the authors illustrated the power of finite free convolution of polynomials to prove the real-rootedness, interlacing, or monotonicity of zeros with respect to the parameters. The key tool was the representation of generalized hypergeometric polynomials as finite free convolutions of simpler ``building blocks'', much easier to study. We also briefly explained the potential of this approach in the study of asymptotic behavior.

This work is, in a certain sense, a natural continuation of \cite{martinez2023real}. Here, the main focus is precisely on the weak asymptotic behavior of zeros of families of generalized hypergeometric polynomials as their degree tends to infinity. Using the representation of a generalized hypergeometric polynomial as a finite free convolution of some simpler elements, we can formally write the limit zero distribution of these polynomials as a free convolution of simpler measures. In order to convert this observation into an effective computational tool, we derive in Section~\ref{sec:Stransform} an expression for the so-called $S$-transform of the limit zero-counting measures of the original polynomials. Unlike the case of the Cauchy transform of such a measure, which is normally an algebraic function, the $S$-transform turns out to be a rational function, easily expressible in terms of the main parameters of the problem.

In the second part of the article, we apply these tools, as well as those from \cite{martinez2023real}, to the study of some well-known families of multiple orthogonal polynomials. While writing the paper, we became aware that the recent contribution \cite{wolfs} mentions some connections of multiple orthogonal polynomials and finite free convolution using the results from \cite{martinez2023real}.

\textbf{Multiple}, also known as Hermite-Pad\'e, \textbf{orthogonal polynomials} (MOPs),  are polynomials of one variable that satisfy the orthogonality conditions with respect to several measures. They are a very useful extension of orthogonal polynomials and have recently received renewed interest because tools have become available to investigate their asymptotic behavior, and they do appear in a number of fascinating applications, see \cite{MR3525716}. 

In this paper, we consider only the case of absolutely continuous orthogonality measures on the real line. Thus, let $r$ be a positive integer and $w_1, w_2,\ldots, w_r$ be non-negative integrable functions (``weights'') on $\R$ for which all the moments
are finite. Let $\bm n = (n_1,n_2,\ldots,n_r) \in \mathbb{N}^r$ be a multi-index of size $|\bm{n}| = n_1+n_2+\cdots+n_r$.
There are two types of multiple orthogonal polynomials. \textbf{Type I multiple orthogonal polynomials} are given as a vector
$(A_{\bm{n},1},A_{\bm{n},2},\ldots,A_{\bm{n},r})$ of $r$ polynomials, where $A_{\bm{n},j}$ has degree $\leq n_j-1$, for
which the function
\begin{equation}
    \label{def:typeIfunction}
    Q_{\bm{n}}(x) \isdef \sum_{j=1}^r A_{\bm{n},j}(x)w_j(x) 
\end{equation}
is orthogonal to all polynomials of degree $\leq |\bm{n}|-2$:
\begin{equation}  \label{typeI}
  \int x^{k}  Q_{\bm{n}}(x)\, dx = \sum_{j=1}^r \int x^k A_{\bm{n},j}(x) w_j(x) \, dx = 0, \qquad 0 \leq k \leq |\bm{n}|-2.
\end{equation}
One usually adds the normalization
\begin{equation}  \label{typeInorm}
    \int x^{|\bm{n}|-1}  Q_{\bm{n}}(x)\, dx = 1 .   
\end{equation}

The \textbf{Type II multiple orthogonal polynomial} $P_{\bm{n}}$ is the monic polynomial of degree $|\bm{n}|$ that satisfies
the orthogonality conditions
\begin{equation}  \label{typeII}    
\int P_{\bm{n}}(x) x^k  w_j(x)\, dx = 0, \qquad  0 \leq k \leq n_j-1, 
\end{equation}
for $1 \leq j \leq r$. 
Both conditions \eqref{typeI}--\eqref{typeInorm} and condition \eqref{typeII} yield a corresponding linear system of $|\bm{n}|$ equations in the $|\bm{n}|$ unknowns, either
coefficients of polynomials $(A_{\bm{n},1},\dots,A_{\bm{n},r})$ or coefficients of the monic polynomial $P_{\bm{n}}$.
The matrices of these linear systems are each other's transpose and contain moments of the $r$ weights $(w_1,\ldots,w_r)$.
A solution of these linear systems may not exist or may not be unique. One needs extra assumptions on the weights
$(w_1,\ldots,w_r)$ for a solution to exist and to be unique. If a unique solution exists for a multi-index $\bm{n}$ then
the multi-index is said to be \textbf{normal}. If all multi-indices are normal, then the system $(w_1,\ldots,w_r)$ is said to be a \textbf{perfect system}.

The weights $(w_1,\ldots,w_r)$ form an \textbf{AT-system} on an interval $\Delta\subset \R$ for a multi-index $\bm n \in \N^r$ if for any polynomials $A_{\bm{n},j}$, $j=1, \ldots, r$, in \eqref{def:typeIfunction}, satisfying the mentioned degree constraints and not all equal to $0$, the function $Q_{\bm{n}}$ has at most $|\bm{n}|-1$ zeros on $\Delta$, see, e.g.~\cite[Chapter 23]{MR2542683}. It is known that if $(w_1,\ldots,w_r)$ is an AT-system for every multi-index $\bm n \in \N^r$, the system is perfect. An example of such systems are the \textbf{Nikishin systems}, fact proved in \cite{MR2852293, MR2832123}. Since this is not central to our discussion, for the definition of such systems we refer the reader to \cite{MR3311759, MR3319505, MR4238535}.

For every AT-system and for any $\bm{n} \in \mathbb{N}^r$, the Type I function for $Q_{\bm{n}}$ defined by \eqref{def:typeIfunction} and satisfying \eqref{typeI}, has exactly $|\bm{n}|-1$ sign changes in $\Delta$, while the Type II multiple orthogonal polynomial $P_{\bm n}$, satisfying \eqref{typeII}, has $|\bm{n}|$ simple zeros on $\Delta$. These zeros exhibit an interlacing property, meaning that there is always a zero of $P_{\bm{n}}$ between two consecutive zeros of $P_{\bm{n}+\bm{e}_k}$, for each $0 \leq k \leq r-1$, where $\bm{e}_k \in \mathbb{N}^r$ is the multi-index that has all entries equal to 0 except the entry of index $k$ which is equal to 1.  
 
Various families of special MOPs have been found, extending the classical orthogonal polynomials, but also related to completely new special functions \cite{MR1662713}, \cite[Ch.~23]{MR2542683}. 

One of the well-known AT-systems is given by the weights 
\begin{equation}
    \label{JPweights}
    w_j(x)=x^{\alpha_j}(1-x)^\beta, \quad j=1, 2,\dots, r,
\end{equation}
on the interval $\Delta=[0,1]$. 
Here $\alpha_1, \ldots, \alpha_r, \beta>-1$ and $\alpha_i-\alpha_j \notin \mathbb{Z}$ for $i \neq j$. The corresponding polynomials are known as the \textbf{Jacobi-Piñeiro polynomials}, and are studied in Sections \ref{subsec:JPTypeI} and \ref{subsec:JPTypeII}.

Two different AT-systems correspond to multiple Laguerre weights on $[0,+\infty)$, see, e.g.~\cite[\S 23.4]{MR2542683}. First, we can take
\begin{equation}
    \label{Lkind1weights}
    w_j(x)=x^{\alpha_j}e^{-x}, \quad j=1, 2,\dots, r,
\end{equation}
where again $\alpha_1, \ldots, \alpha_r >-1$ and  $\alpha_i-\alpha_j \notin \mathbb{Z}$ for $i \neq j$. The corresponding polynomials are known as \textbf{ multiple Laguerre polynomials of the first kind}, and their behavior is discussed in Sections \ref{subsec:MLTypeI} and \ref{subsec:MLTypeII}. 

Another option is to define the weights
\begin{equation}
    \label{Lkind2weights}
    w_j(x)=x^{\alpha}e^{-c_jx}, \quad j=1, 2,\dots, r,
\end{equation}
where $\alpha>-1$, with all $c_i>0$, and such that $c_i\neq c_j$ for $i\neq j$. The corresponding polynomials are known as \textbf{multiple Laguerre polynomials of the second kind}, and their properties are discussed in Sections \ref{sec:TIML2k} and \ref{sec:T2ML2k}. For Type I, they are a special case of the Kampé de Fériet polynomials, so, in Section~\ref{sec:KdF}, we prove the more general fact that they can be represented as a finite free additive convolution of hypergeometric polynomials.

The asymptotic behavior of MOPs is a highly non-trivial subject. Although the study of the Hermite-Padé polynomials goes back to the original works of Hermite, see~\cite{Hermite1, Hermite2}, as well as \cite{MR1130396}, the first important asymptotic result appeared in the work of Kalyagin \cite{MR0562212}. In the 1980s, the ground-breaking works of Aptekarev, Gonchar, Rakhmanov, and Stahl, made clear that the asymptotics of the Type I form \eqref{def:typeIfunction} (but not of the individual entries $A_{\bm n, i}$) and of the Type II polynomials $P_{\bm n}$ can be described in terms of a vector equilibrium problem for logarithmic potentials, \cite{MR1240781, MR0651757, MR0807734}. However, solving such a problem is usually a formidable task.

Another approach is via the higher-order nearest-neighbor recurrence relations by MOPs, established in the work of Van Assche \cite{MR2832734} (see also \cite[Ch.~23]{MR2542683}) that allow to derive an algebraic equation on a weighted distribution of zeros. An important ingredient of this method is the expression (or, at least, the behavior) of the recurrence coefficients. A systematic study of a large number of classical and semi-classical families of orthogonality weights for MOP, and in particular, of their recurrence coefficients, was carried out in a number of contributions by Van Assche and his collaborators in recent years; see, e.g.~\cite{MR1990569}. A limitation of this approach is that it allows one to address only the quasi-diagonal case (step line) for the multi-indices $\bm n$.

Finally, another recent and formidable tool for asymptotic analysis is the non-linear steepest descent method of Deift and Zhou, applied to the Riemann-Hilbert characterization of MOPs \cite{MR2006283}. This technique renders extremely precise asymptotic information
(see, e.g.~\cite{MR2103904, MR2849479, MR3525716, MR3939592}, to mention a few), but at a very high cost of being technically challenging. 

In this paper, we address the problem of the asymptotic zero distribution of the three families of MOP mentioned above, when the degree $|\bm n| \to \infty$, also allowing a linear dependence of the parameters $\alpha$'s and $\beta$'s on $\bm n$. As recent investigations show, these polynomials are hypergeometric, so we can use the free convolution approach at a relatively low cost. We stress that one of the appeals of this technique is its simplicity. Alternatives such as the general methods described above, or using the differential equation or the integral representation \cite{zhou2024asymptotic} of these MOPs are usually much more involved. It is worth mentioning also that the zero asymptotics of the Type II multiple Laguerre polynomials of the second kind was obtained in terms of free convolution of measure in \cite{hardy2015} using arguments from random matrix theory.

Since a representation of a real-rooted polynomial in terms of free convolution can involve polynomials with complex zeros, the results on the asymptotic regime from \cite{arizmendi2021finite,arizmendi2018cumulants} cannot be directly applied. Taking advantage of the fact the proofs in those references are essentially algebraic, based on the behavior of moments and cumulants, we adapt these arguments to analyze the finite free convolutions in the asymptotic regime for measures compactly supported on the complex plane and not necessarily on the real line; see Section~\ref{sec:finiteinAsymptotics} and Appendix~\ref{sec:appendix}.

As a by-product of the representation of the hypergeometric polynomials in terms of finite free convolutions, we also derive some zero monotonicity and interlacing properties for the mentioned families of MOP. They appear to be new in the majority of cases; they are especially interesting for the polynomials $A_{\bm n,i}$ in \eqref{def:typeIfunction}, where not much is known.

\section{Preliminaries} \label{sec:prelim}

\subsection{Notation}\ 

In what follows, $\P_n$ stands for all algebraic polynomials of degree $\le n$, and $\P \isdef \cup_{n\ge 0} \P_n$.
Also, for $K\subset \C$, we denote by $\P_n(K) $  the subset of polynomials of degree $\le n$  with all zeros in $K$. In particular, $\P_n(\R)$ denotes the family of real-rooted polynomials of degree $\le n$, and $\P_n(\R_{\ge 0})$ is the subset of $\P_n(\R)$ of polynomials having only roots in $\R_{\ge 0} \isdef [0,+\infty)$, e.g real and non-negative. 

The \textbf{rising factorial} (also, \textbf{Pochhammer's symbol\footnote{\, Another standard notation for the raising factorial is $(a)_j$. We prefer to use the notation defined here.}})  for $a\ne 0$ and $j\in \mathbb Z_{\ge 0}\isdef \N \cup \{0\}$ is  
$$
\raising{a}{j} \isdef a(a+1)\dots(a+j-1)= \frac{\Gamma(a+j)}{\Gamma(a)}, \quad \raising{a}{0} \isdef  1,$$
while the \textbf{falling factorial} is defined as
$$
\falling{a}{j} \isdef  a(a-1)\dots(a-j+1)= \raising{a-j+1}{j}, \quad \falling{a}{0}\isdef  1.
$$
If $\bm a =(a_1, \dots, a_i)\in \R^i$ is a vector (tuple), we understand by
$$
\raising{\bm a}{k} =\prod_{s=1}^i \raising{a_s}{k}.
$$

For $n\in \N$, we will use the notation
\begin{equation}
    \label{notationZn}
    \Z_n \isdef  \{0,  1, 2,\dots,  n-1  \}, \quad (-\Z_n) \isdef \{0, -1,-2,\dots, -n+1  \}.
\end{equation}

Finally, we will write
$$
p \simeq q
$$
to indicate that polynomials $p$ and $q$ coincide up to a non-zero multiplicative constant.
We will also use the standard notation of the theory of orthogonal polynomials,
\begin{equation}
    \label{defReversed}
p^*(z)\isdef z^n\, \overline{p(1/\overline{z})}
\end{equation}
for the \textbf{reversed} polynomial of $p$. Clearly, for polynomials $p$ with real coefficients, $p^*(x)= x ^n\, p(1/x)$, in which case $p^*$ is usually called the \textbf{reciprocal} of $p$.

\subsection{Hypergeometric polynomials}\

For $\bm a =(a_1, \dots, a_i)\in \R^i$ and $\bm b =(b_1, \dots, b_j)\in \R^j$, a generalized \textbf{hypergeometric series} \cite{MR2656096, MR2723248} is an expression 
\begin{equation*}  
\HGF{i+1}{j}{a_0,\ \bm a}{\bm b}{x}=\sum_{k=0}^\infty  \frac{\raising{a_0}{k} \raising{\bm a }{k} }{\raising{\bm b }{k}  } \frac{x^k}{k !}.
\end{equation*}
In the particular case when $a_0$ is a negative integer, the series is terminating:
\begin{equation}
    \label{consequenceconnectionRandL2}
 \HGF{i+1}{j}{-n,\ \bm a  }{\bm b }{x}=\sum_{k=0}^n  \frac{\raising{-n}{k} \raising{\bm a }{k} }{\raising{\bm b }{k}  } \frac{x^k}{k !}
\end{equation}
is a (generalized) \textbf{hypergeometric polynomial} of degree $ \le n$, as long as 
\begin{equation}  \label{assumptionsHGFB}
    b_1,\dots b_j \notin \left(- \Z_{n+1}\right).
\end{equation}
Additionally, the polynomial in \eqref{consequenceconnectionRandL2} is of degree exactly $n$ if and only if
\begin{equation}\label{assumptionsHGFNew}
  a_1,\dots, a_i \notin \left(- \Z_n\right).
\end{equation} 
In what follows, we always assume that conditions \eqref{assumptionsHGFB}--\eqref{assumptionsHGFNew} hold. From a direct computation, we get the following equation for the derivative of a hypergeometric polynomial,
 \begin{equation}\label{derivativeHyper}
     \frac{d}{dx}\left( \HGF{i+1}{j}{-n,\ \bm a  }{\bm b }{x}\right)\simeq\HGF{i+1}{j}{-n+1,\ \bm a+1  }{\bm b+1 }{x}.
 \end{equation}

We will also make use of a multivariable generalization of the Kampé de Fériet series, e.g.~\cite[Section 1.4, Eq.~(24)--(25)]{MR0834385}: for $k=0,1,2,\dots, r$, consider $i_k,j_k\in \N$ and tuples of parameters $\bm a_k\in \rr^{i_k}$,  $\bm b_k\in \rr^{j_k}$; then 
$$
F_{j_0: j_1 ;\dots; j_r}^{i_0: i_1 ;\dots; i_r}\left[\left.\begin{array}{c}
\bm a_0:\bm a_1 ; \dots ; \bm a_r \\
\bm b_0:\bm b_1 ;\dots ; \bm b_r
\end{array} \right\rvert\, x_1,\dots,x_r \right] \isdef \sum_{l_1=0}^{\infty}\cdots  \sum_{l_r=0}^{\infty} \frac{\raising{\bm a_0}{l_1+\dots +l_r}  \raising{\bm a_1}{l_1} \cdots \raising{\bm a_r}{ l_r}  }{\raising{\bm b_0}{l_1+\dots +l_r}  \raising{\bm b_1}{l_1} \cdots \raising{\bm b_r}{ l_r}}   \frac{x_1^{l_1}}{l_1!} \cdots \frac{x_r^{l_r}}{l_r!} ,
$$
along with the Kampé de Fériet polynomials: for $n\in \Z_{\ge 0}$,
\begin{equation}
    \label{KdFpolyns.mult}
    \begin{split}
 F_{j_0: j_1 ;\dots; j_r}^{i_0+1: i_1 ;\dots; i_r} & \left[\left.\begin{array}{c}
-n,\bm a_0:\bm a_1 ; \dots ; \bm a_r \\
\bm b_0:\bm b_1 ;\dots ; \bm b_r
\end{array} \right\rvert\,  x_1,\dots,x_r \right] \\
& = \sum_{k=0}^{n}\frac{\raising{-n}{k}\raising{\bm a_0}{k} }{\raising{\bm b_0}{k}}\sum_{l_1+\cdots+l_r=k} \frac{\raising{\bm a_1}{l_1}x_1^{l_1}}{\raising{\bm b_1}{l_1} l_1!}   \cdots \frac{\raising{\bm a_r}{ l_r} x_r^{l_r}}{\raising{\bm b_r}{ l_r} l_r!} \\
& = \sum_{l_1+\cdots+l_r+m=n} \frac{\raising{-n}{n-
m}\raising{\bm a_0}{n-m} }{\raising{\bm b_0}{n-m}}\frac{\raising{\bm a_1}{l_1}x_1^{l_1}}{\raising{\bm b_1}{l_1} l_1!}   \cdots \frac{\raising{\bm a_r}{ l_r} x_r^{l_r}}{\raising{\bm b_r}{ l_r} l_r!}.
  \end{split}
\end{equation}

\subsection{Finite free convolutions}\ \label{sec:finitefreeconv}

\begin{definition}[\cite{MR4408504}] \label{def:MultiplicativeConv}
Given two polynomials, $p$ and $q$, of degree at most $n$, the 
\textbf{$n$-th multiplicative finite free convolution} of $p$ and $q$, denoted as $p\boxtimes_n q$, is a polynomial of degree at most $n$, which can be defined in terms of the coefficients of polynomials written in the form  
\begin{equation}
    \label{defFMULTCONV}
    p(x)=\sum_{j=0}^n x^{n-j}(-1)^j e_j(p) \quad \text { and } \quad q(x)=\sum_{j=0}^n x^{n-j}(-1)^j e_j(q).
\end{equation}
Namely,
$$
[p\boxtimes_n q](x) \isdef \sum_{k=0}^n x^{n-k}(-1)^k e_k(p\boxtimes_n q),
$$
with
\begin{equation}
    \label{coeffMultConv}
  e_k(p\boxtimes_n q) \isdef  \binom{n}{k}^{-1} e_k(p) e_k(q).
\end{equation}
\end{definition}
In particular, if $p, q $ are of degree $n$ and monic, then also $p\boxtimes_n q $ has the same property. 

The multiplicative finite free convolution is a bi-linear operator  from $\P_n \times \P_n$ to $\P_n$: if $p,q,r\in \P_n$,  and $\alpha\in\rr$, then 
\begin{equation*}
    (\alpha p+q)\boxtimes_n r= \alpha (p\boxtimes_n r)+q\boxtimes_n r.
\end{equation*}

From Definition \ref{def:MultiplicativeConv} it easily follows that multiplicative convolution with the polynomial $(x-\alpha)^n$ is equivalent to a  \emph{dilation} of the roots by $\alpha$:
\begin{equation}
    \label{identityMult2}
    \dil{\alpha}(p) \isdef \alpha^n p\left(\frac{x}{\alpha}\right)=p(x) \boxtimes_n (x-\alpha)^n , \quad \alpha\neq 0.
\end{equation}
In consequence,
\begin{equation}
    \label{identityMult3a}
    (\dil{\alpha} p) \boxtimes_n q = p \boxtimes_n (\dil{\alpha} q)= \dil{\alpha} \left[ p \boxtimes_n q\right], \quad \alpha\neq 0,
\end{equation}
and
\begin{equation}
    \label{identityMult3b}
\dil{\alpha}\dil{\beta} (p)= \dil{\alpha\beta} (p), \quad \alpha,\beta\neq 0.
\end{equation}

The following result was proved in \cite[Theorem 3.1]{martinez2023real}:
\begin{thmx} 
\label{thm:multiplicativeConvHG}
If $n\in \mathbb Z_{\ge 0}$, and  
$$p(x)=\HGF{i_1+1}{j_1}{-n, \bm a_1}{\bm b_1}{x}, \qquad q(x)=\HGF{i_2+1}{j_2}{-n, \bm a_2}{\bm b_2}{x},$$
where the parameters $\bm a_1,\bm a_2,\bm b_1,\bm b_2 $ are tuples (of sizes $i_1,i_2,j_i,j_2$, respectively), then their $n$-th free multiplicative convolution is given by
$$[p\boxtimes_n q](x)=\HGF{i_1+i_2+1}{j_1+j_2}{-n, \bm a_1, \bm a_2}{\bm b_1,\bm b_2}{x}.$$
\end{thmx}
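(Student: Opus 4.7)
The plan is a direct coefficient-level verification using Definition~\ref{def:MultiplicativeConv}. First I would write each hypergeometric polynomial in the canonical form \eqref{defFMULTCONV} and read off its elementary symmetric coefficients. Starting from
\[
p(x) = \sum_{k=0}^n \frac{\raising{-n}{k}\raising{\bm a_1}{k}}{\raising{\bm b_1}{k}\, k!}\, x^k,
\]
substituting $k=n-j$ and applying the elementary identity $\raising{-n}{n-j}=(-1)^{n-j}\, n!/j!$, I obtain the clean formula
\[
e_j(p)= (-1)^n \binom{n}{j}\, \frac{\raising{\bm a_1}{n-j}}{\raising{\bm b_1}{n-j}},
\]
and an analogous one for $e_j(q)$. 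The crux of this step is that the factor $1/(n-j)!$ from the hypergeometric denominator combines with $\raising{-n}{n-j}$ to produce a binomial coefficient $\binom{n}{j}$, which is the feature that makes the subsequent multiplicative convolution close nicely on hypergeometric polynomials.

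Next, I would apply \eqref{coeffMultConv}: the factor $\binom{n}{j}^{-1}$ in the definition of $\boxtimes_n$ exactly cancels one of the two $\binom{n}{j}$ factors produced by multiplying $e_j(p)\, e_j(q)$, leaving
\[
e_j(p \boxtimes_n q) = \binom{n}{j}\, \frac{\raising{\bm a_1}{n-j}\raising{\bm a_2}{n-j}}{\raising{\bm b_1}{n-j}\raising{\bm b_2}{n-j}}.
\]
This is exactly the shape required for the coefficients of a hypergeometric polynomial whose numerator parameters are $-n,\bm a_1,\bm a_2$ and whose denominator parameters are $\bm b_1,\bm b_2$.

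To finish, I would reassemble $[p\boxtimes_n q](x)=\sum_{j=0}^n x^{n-j}(-1)^j e_j(p\boxtimes_n q)$ and reverse the change of index $k=n-j$, running the identities of the first step backwards, so that the resulting series is recognized as $\HGF{i_1+i_2+1}{j_1+j_2}{-n,\bm a_1,\bm a_2}{\bm b_1,\bm b_2}{x}$ (up to the overall scalar inherited from the non-monic normalization of the hypergeometric form, which is already encoded on both sides and hence irrelevant under the identification $\simeq$). No step is a genuine obstacle: the entire argument rides on the algebraic fact that the $\binom{n}{j}^{-1}$ hard-wired into Definition~\ref{def:MultiplicativeConv} is exactly tuned to the binomial $\binom{n}{j}$ that the negative-integer parameter $-n$ pulls out of the hypergeometric series. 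Everything else is careful sign and factorial bookkeeping.
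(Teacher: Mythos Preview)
Your direct coefficient verification is the natural argument and is essentially correct. Note that the present paper does not include a proof of this result at all; it is quoted from \cite{martinez2023real}, so there is no in-paper proof to compare against. The approach in that reference is exactly the one you outline: read off $e_j(p)$ and $e_j(q)$ from the hypergeometric series, multiply, and cancel the $\binom{n}{j}^{-1}$ against one of the two binomials.

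One caveat on the bookkeeping. Your computation gives $e_j(p)=(-1)^n\binom{n}{j}\raising{\bm a_1}{n-j}/\raising{\bm b_1}{n-j}$, and hence
\[
e_j(p\boxtimes_n q)=\binom{n}{j}\,\frac{\raising{\bm a_1}{n-j}\raising{\bm a_2}{n-j}}{\raising{\bm b_1}{n-j}\raising{\bm b_2}{n-j}},
\]
whereas the target polynomial $r(x)=\HGF{i_1+i_2+1}{j_1+j_2}{-n,\bm a_1,\bm a_2}{\bm b_1,\bm b_2}{x}$ has $e_j(r)=(-1)^n\binom{n}{j}\,\raising{\bm a_1}{n-j}\raising{\bm a_2}{n-j}/(\raising{\bm b_1}{n-j}\raising{\bm b_2}{n-j})$. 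So strictly one obtains $p\boxtimes_n q=(-1)^n\,r$, not literal equality (check $n=1$: $p\boxtimes_1 q$ has constant term $-1$, while $r$ has constant term $1$). Your phrase ``already encoded on both sides'' is therefore not quite accurate; the honest conclusion is the identity up to the global factor $(-1)^n$, i.e.\ $p\boxtimes_n q\simeq r$, which is all that is ever used downstream.
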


\begin{definition}[\cite{MR4408504}]
Given two polynomials, $p$ and $q$, of degree at most $n$, the \textbf{$n$-th additive finite free convolution} of $p$ and $q$, denoted as $p\boxplus_n q$, is a polynomial of degree at most $n$, defined in terms of the coefficients of polynomials written in the form 
\begin{equation}
    \label{defFADDCONV}
    p(x)=\sum_{j=0}^n x^{n-j}(-1)^j e_j(p) \quad \text { and } \quad q(x)=\sum_{j=0}^n x^{n-j}(-1)^j e_j(q).
\end{equation}
Namely,
$$
[p\boxplus_n q](x) \isdef \sum_{k=0}^n x^{n-k}(-1)^k e_k(p\boxplus_n q),
$$
with
\begin{equation}
    \label{coeffAdditiveConv}
e_k(p\boxplus_n q) \isdef \falling{n}{k} \sum_{i+j=k} \frac{e_i(p)}{\falling{n}{i}}\frac{ e_j(q)}{\falling{n}{j}} 
\end{equation}
(and thus, $e_0(p\boxplus_n q)=e_0(p)e_0(  q)$).
\end{definition}
 
The additive finite free convolution is a bi-linear operator  from $\P_n \times \P_n$ to $\P_n$: if $p,q,r\in \P_n$,  and $\alpha\in\rr$, then 
\begin{equation*}
    (\alpha p+q)\boxplus_n r= \alpha (p\boxplus_n r)+q\boxplus_n r.
\end{equation*}
Moreover,
    \begin{equation}
        \label{shift}
        p(x) \boxplus_n (x-\alpha)^n = p(x-\alpha), \quad p\in \P_n.
    \end{equation}
and $p \boxplus_n q =0$ if and only if $\deg(p)+\deg(q) < n$, or if $\deg p=n$ then $q\equiv 0$.
This also shows that the inverse of any $p\in \P_n$ under the additive (finite free) convolution $\boxplus_n$ is unique. 

In analogy to \eqref{identityMult3a}, we have
\begin{equation}
    \label{identityAdd3a}
    (\dil{\alpha} p) \boxplus_n (\dil{\alpha}q) \simeq \dil{\alpha} \left(  p \boxplus_n q\right), \quad \alpha\neq 0.
\end{equation}

The following result appears, although in a slightly different form, in \cite[Theorem 3.4]{martinez2023real}:
\begin{thmx}
\label{thm.additive.conv.HG}
Let $p$ and $q$ be hypergeometric polynomials of the following form:
$$
p(x)= \HGF{i_1+1}{j_1}{-n,\ \bm a_1}{\bm b_1}{(-1)^{l_1}x}, \qquad q(x)=\HGF{i_2+1}{j_2}{-n,\ \bm a_2}{\bm b_2}{(-1)^{l_2}x},
$$
where $l_1,l_2\in\{0, 1\}$ the parameters $\bm a_1,\bm a_2,\bm b_1,\bm b_2 $ are tuples (of sizes $i_1,i_2,j_i,j_2$, respectively). 
Then, with the notation $\mathfrak d  =d/dx$, 
\begin{equation*}
    \begin{split}
 [p\boxplus_n q](x)  \simeq   
 \HGF{j_1}{i_1}{-\bm b_1-n+1}{-\bm a_1-n+1}{(-1)^{i_1+j_1+l_1+1}\mathfrak d  }  \HGF{j_2}{i_2}{-\bm b_2-n+1}{-\bm a_2-n+1}{(-1)^{i_2+j_2+l_2+1}\mathfrak d  } [x^n].
    \end{split}
\end{equation*} 
\end{thmx}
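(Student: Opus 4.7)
The plan is to realize $\boxplus_n$ as an action of a differential operator built from one of its arguments, identify that operator for a hypergeometric $p$ with the factor appearing in the theorem, and then compose. Concretely, from (\ref{coeffAdditiveConv}) one checks
$$
p\boxplus_n q = \sum_{k=0}^n \frac{(-1)^k e_k(q)}{\falling{n}{k}}\,p^{(k)}(x) =: D_q[p],
$$
by expanding $p^{(k)}=\sum_j (-1)^j e_j(p)\falling{n-j}{k}x^{n-j-k}$ and using $\falling{n-j}{k}\falling{n}{j}=\falling{n}{j+k}$. Since $x^n$ is the $\boxplus_n$-identity (take $\alpha=0$ in (\ref{shift})), $D_q[x^n]=q$, and by commutativity of $\boxplus_n$
$$
p\boxplus_n q = D_p[q] = D_pD_q[x^n].
$$
Hence it suffices to identify each $D_\bullet$, up to a nonzero scalar, with the corresponding hypergeometric differential-operator factor on the right-hand side of the theorem.

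For $D_p$, expanding $p(x)$ via $\raising{-n}{m}=(-1)^m\falling{n}{m}$ gives
$$
\frac{(-1)^m e_m(p)}{\falling{n}{m}} = \frac{(-1)^{(l_1+1)(n+m)}}{m!}\cdot\frac{\raising{\bm a_1}{n-m}}{\raising{\bm b_1}{n-m}}.
$$
The main algebraic move is the Pochhammer reflection
$$
\frac{\raising{a}{n-m}}{\raising{a}{n}}=\frac{(-1)^m}{\raising{-a-n+1}{m}},
$$
which follows from $\raising{a}{n}=\raising{a}{n-m}\raising{a+n-m}{m}$ together with $\raising{a+n-m}{m}=(-1)^m\raising{-a-n+1}{m}$. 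Applied componentwise to the tuples $\bm a_1$ and $\bm b_1$, it inverts the ratio at the cost of a sign $(-1)^{(i_1-j_1)m}$ and an overall $m$-independent factor $\raising{\bm a_1}{n}/\raising{\bm b_1}{n}$. Collecting all $m$-dependent signs, the exponent becomes $m(l_1+1+i_1-j_1)\equiv m(i_1+j_1+l_1+1)\pmod 2$, which I absorb into $\bigl((-1)^{i_1+j_1+l_1+1}\mathfrak d\bigr)^m$, yielding
$$
D_p = C_p\cdot \HGF{j_1}{i_1}{-\bm b_1-n+1}{-\bm a_1-n+1}{(-1)^{i_1+j_1+l_1+1}\mathfrak d}, \qquad C_p\ne 0,
$$
as operators on $\P_n$. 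The formally infinite hypergeometric series on the right truncates when applied to any polynomial of degree $\le n$ because $\mathfrak d^m$ kills it for $m>n$, and assumption (\ref{assumptionsHGFNew}) prevents the denominator Pochhammer symbols from vanishing for $m\le n$. The identical formula holds for $D_q$; composing the two operators and applying to $x^n$ yields the claim, with the $\simeq$ absorbing $C_pC_q$.

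The main obstacle is the sign arithmetic in the middle step, which has to collate four contributions: the $(-1)^{l_1}$ inside the argument of $p$, the leading $(-1)^m$ in the definition of $D_p$, and the $i_1$ resp.\ $j_1$ applications of the Pochhammer reflection to the $\bm a_1$ and $\bm b_1$ tuples. The clean exponent $i_1+j_1+l_1+1$ in the statement emerges only after using $-x\equiv x\pmod 2$ on the sign contributed by reflecting the $\bm b_1$-Pochhammers; everything else is bookkeeping.
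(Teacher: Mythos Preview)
Your proof is correct. The paper itself does not prove this statement---it is quoted (as Theorem~B) with the remark that it ``appears, although in a slightly different form, in \cite[Theorem 3.4]{martinez2023real}''---so there is no proof in the present paper to compare against. Your argument is self-contained and sound: the identity $p\boxplus_n q = D_pD_q[x^n]$ with $D_q=\sum_k(-1)^k e_k(q)\falling{n}{k}^{-1}\mathfrak d^k$ is exactly the differential-operator realization of $\boxplus_n$ used in this line of work, and your coefficient computation via the reflection $\raising{a}{n-m}/\raising{a}{n}=(-1)^m/\raising{-a-n+1}{m}$ together with the sign bookkeeping $(n-m)(l_1+1)+(i_1-j_1)m\equiv n(l_1+1)+m(i_1+j_1+l_1+1)\pmod 2$ is accurate. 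The truncation remark and the appeal to \eqref{assumptionsHGFNew} for nonvanishing of the denominator Pochhammers are also correct.
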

From here, an immediate consequence is the following auxiliary result:
\begin{lemma} \label{lemma:trick}
Assume that a polynomial $p$ of degree $n$  can be represented as a product of hypergeometric functions,
$$
 p(x)= \HGF{i_1}{j_1}{-\bm b_1-n+1}{-\bm a_1-n+1}{(-1)^{i_1+j_1+l_1+1}x} \HGF{i_2}{j_2}{ -\bm b_2-n+1}{-\bm a_2-n+1}{(-1)^{i_2+j_2+l_2+1}x},
$$
where $l_1,l_2\in\{0, 1\}$. Then, for the reciprocal polynomial $p^*$,
$$
p^*(x)\simeq  \HGF{2}{0}{-n,1}{\cdot}{x} \boxtimes_n\left[\HGF{i_1+1}{j_1}{-n,\ \bm a_1}{\bm b_1}{ (-1)^{l_1}x} \boxplus_n
\HGF{i_2+1}{j_2}{-n,\ \bm a_2}{\bm b_2}{ (-1)^{l_2}x}\right].$$
\end{lemma}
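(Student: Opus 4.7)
The plan is to decompose the proof into two natural steps: a direct application of Theorem~\ref{thm.additive.conv.HG} and a universal coefficient identity.

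For the first step, I would recognize the two hypergeometric factors of $p$ as the differential operators produced by Theorem~\ref{thm.additive.conv.HG}. Setting
$$q_s(x)=\HGF{i_s+1}{j_s}{-n,\bm a_s}{\bm b_s}{(-1)^{l_s}x}, \qquad s=1,2,$$
the theorem yields $q_1 \boxplus_n q_2 \simeq P(\mathfrak{d})[x^n]$, where $\mathfrak{d}=d/dx$ and $P(\mathfrak d)$ is the product of the two formal hypergeometric series appearing in the statement, now evaluated at $\mathfrak d$ instead of $x$. Because only terms of total order $\leq n$ survive under the action on $x^n$ and the product is assumed to be a polynomial of degree $n$, one may identify this operator with $p(\mathfrak{d})$ for the lemma's polynomial $p$. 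Consequently, the statement of the lemma reduces to the universal identity
$$p^*(x) \simeq \HGF{2}{0}{-n,1}{\cdot}{x} \boxtimes_n \bigl(p(\mathfrak{d})[x^n]\bigr)$$
for every polynomial $p$ of degree $\leq n$.

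For the second step I would verify this reduced identity by comparing coefficients. Writing $p(x)=\sum_{k=0}^n c_k x^k$, one has immediately $p^*(x)=\sum_k c_k x^{n-k}$ and $p(\mathfrak{d})[x^n]=\sum_k c_k\falling{n}{k} x^{n-k}$, which via the convention of \eqref{defFMULTCONV} translate into $e_k(p^*)=(-1)^k c_k$ and $e_k\bigl(p(\mathfrak{d})[x^n]\bigr) = (-1)^k c_k\falling{n}{k}$. A short expansion using $\raising{-n}{k}=(-1)^k\falling{n}{k}$ and $\raising{1}{k}=k!$ gives $\HGF{2}{0}{-n,1}{\cdot}{x}=\sum_k (-1)^k\falling{n}{k}\,x^k$, whence $e_k\bigl(\HGF{2}{0}{-n,1}{\cdot}{x}\bigr) = (-1)^n n!/k!$ after reindexing. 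Substituting all three into \eqref{coeffMultConv} and applying the elementary identity $\binom{n}{k}^{-1}\cdot\tfrac{n!}{k!}\cdot\falling{n}{k}=n!$ collapses everything to
$$e_k\bigl(\HGF{2}{0}{-n,1}{\cdot}{x}\boxtimes_n p(\mathfrak{d})[x^n]\bigr) = (-1)^n n!\cdot e_k(p^*),$$
which is the proportionality sought.

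No serious obstacle is anticipated; the entire argument is essentially mechanical. The only delicate point lies in the consistent bookkeeping of signs: the global factor $(-1)^n$ produced by $\raising{-n}{k}=(-1)^k\falling{n}{k}$, the alternating-sign convention in \eqref{defFMULTCONV}, and the signs $(-1)^{l_s}$ and $(-1)^{i_s+j_s+l_s+1}$ inherited from Theorem~\ref{thm.additive.conv.HG} must all align correctly. Once this bookkeeping is in place, what remains is a short manipulation between rising and falling factorials.
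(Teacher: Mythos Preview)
Your proposal is correct and follows essentially the same route as the paper: apply Theorem~\ref{thm.additive.conv.HG} to identify $q_1\boxplus_n q_2$ with $p(\mathfrak d)[x^n]$, then verify the universal coefficient identity $\HGF{2}{0}{-n,1}{\cdot}{x}\boxtimes_n q(\mathfrak d)[x^n]\simeq q^*$ by direct comparison of $e_k$'s. The paper carries out exactly these two steps, arriving at the same proportionality constant $(-1)^n n!$.
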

\begin{proof}
From the hypothesis viewed as a differential function we get
$$p(\mathfrak d)= \HGF{i_1}{j_1}{-\bm b_1-n+1}{-\bm a_1-n+1}{(-1)^{i_1+j_1+l_1+1}\mathfrak d } \HGF{i_2}{j_2}{ -\bm b_2-n+1}{-\bm a_2-n+1}{(-1)^{i_2+j_2+l_2+1}\mathfrak d }.
$$
By Theorem~\ref{thm.additive.conv.HG}, 
\begin{equation}\label{additive}
    p\left(\mathfrak d  \right)[x^n]\simeq\HGF{i_1+1}{j_1}{-n,\ \bm a_1}{\bm b_1}{ (-1)^{l_1}x} \boxplus_n
\HGF{i_2+1}{j_2}{-n,\ \bm a_2}{\bm b_2}{ (-1)^{l_2}x}.
\end{equation}

On the other hand, for a polynomial 
$$
q(x)=\sum_{j=0}^n x^{n-j}(-1)^j e_j(q)
$$
it holds that
\begin{equation}\label{reciprocal}
   q\left(\mathfrak d  \right)[x^n]=\sum_{k=0}^n(-1)^k\falling{n}{n-k} e_k(q)x^k=(-1)^n n!\sum_{k=0}^n(-1)^k\frac{e_k(q^*)}{(n-k)!}x^{n-k}. 
\end{equation}
Applying the multiplicative convolution by 
$$
\HGF{2}{0}{-n, \,1}{\cdot}{x} = \sum_{k=0}^n (-1)^k\, \binom{n}{k} (n-k)! \, x^{n-k}
$$  
on both sides of \eqref{reciprocal} and using the definition \eqref{coeffMultConv}, we get the identity
\begin{equation*}
  \HGF{2}{0}{-n, \,1}{\cdot}{x}  \boxtimes_n  q\left(\mathfrak d  \right)[x^n]    \simeq q^*.
\end{equation*}
Applying it to \eqref{additive} it yields 
\begin{equation*}
   p^*\simeq \HGF{2}{0}{-n,1}{\cdot}{x}
\boxtimes_n\left[\HGF{i_1+1}{j_1}{-n,\ \bm a_1}{\bm b_1}{ (-1)^{l_1}x} \boxplus_n
\HGF{i_2+1}{j_2}{-n,\ \bm a_2}{\bm b_2}{ (-1)^{l_2}x}\right],
\end{equation*}
which concludes the proof.
\end{proof}

\subsection{Real roots, interlacing, and free finite convolution} \label{sec:realrootednessfreeconv}\

A very important fact is that in many circumstances the finite free convolution of two polynomials with real roots also has all its roots real. Here, we use the notation introduced at the beginning of Section \ref{sec:prelim}. 

\begin{proposition}[Szeg\H{o} \cite{szego1922}, Walsh \cite{Walsh1922}, see also \cite{martinez2023real}]
    \label{prop:realrootedness}
    Let $p, q \in \P_n$. Then
    \begin{enumerate}[(i)]
        \item   $p,q\in \P_n(\rr) \ \Rightarrow \ p\boxplus_n q\in \P_n(\rr)$.
        \item  $p\in \P_n(\rr),\ q\in \P_n(\rr_{\geq 0}) \ \Rightarrow \ p\boxtimes_n q\in \P_n(\rr)$.
        \item  $p,q\in \P_n(\rr_{\geq 0}) \ \Rightarrow \ p\boxtimes_n q\in \P_n(\rr_{\geq 0})$.
        \item $p,q\in \P_n(\rr_{\leq 0}) \ \Rightarrow \ p\boxtimes_n q\in \P_n(\rr_{\geq 0})$
\item $p\in \P_n(\rr_{\leq 0}),\ q\in \P_n(\rr_{\geq 0}) \ \Rightarrow \ p\boxtimes_n q\in \P_n(\rr_{\leq 0})$.
    \end{enumerate}
\end{proposition}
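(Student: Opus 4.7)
The strategy is to establish (i), (ii), and (iii) directly and then deduce (iv) and (v) from (iii) via the dilation identities \eqref{identityMult3a}--\eqref{identityMult3b}.

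For (i), the additive case, I would invoke the Marcus--Spielman--Srivastava interpretation of $\boxplus_n$ \cite{MR4408504}: $p \boxplus_n q$ equals the expected characteristic polynomial of $A + UBU^{*}$, where $A,B$ are Hermitian $n\times n$ matrices whose spectra are the roots of $p$ and $q$, and $U$ is Haar-distributed on the orthogonal group. Since every realization $A+UBU^{*}$ is Hermitian, hence real-rooted, MSS's interlacing-family argument transfers real-rootedness to the expectation. An equivalent classical route is through the Hermite--Poulain theorem on differential operators with real-rooted symbols, applied to a suitable operator built from $p$ and acting on $q$. For (ii), I would appeal to Szeg\H{o}'s 1922 theorem \cite{szego1922} on coefficient-wise products in the binomial basis, which applied to \eqref{coeffMultConv} immediately yields real-rootedness of $p \boxtimes_n q$ when $p \in \P_n(\R)$ and $q \in \P_n(\R_{\ge 0})$. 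Given (ii), part (iii) follows by sign analysis: rescaling so that $p$ and $q$ have positive leading coefficients (which affects neither the hypotheses nor the conclusion, since $\P_n(\R_{\ge 0})$ is closed under sign changes), we have $e_k(p), e_k(q) \ge 0$, whence $e_k(p \boxtimes_n q) \ge 0$ by \eqref{coeffMultConv}. Then
\[ [p \boxtimes_n q](-t) = (-1)^n \sum_{k=0}^n e_k(p \boxtimes_n q)\, t^{n-k} \]
has constant sign for $t>0$, precluding negative real roots.

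Parts (iv) and (v) reduce to (iii) via the dilation $\dil{-1}$, which is an involution sending $\P_n(\R_{\le 0})$ bijectively onto $\P_n(\R_{\ge 0})$. For (iv), combining \eqref{identityMult3a} and \eqref{identityMult3b} gives
\[ \dil{-1}(p) \boxtimes_n \dil{-1}(q) \;=\; \dil{(-1)(-1)}(p \boxtimes_n q) \;=\; p \boxtimes_n q, \]
and the left-hand side lies in $\P_n(\R_{\ge 0})$ by (iii) applied to $\dil{-1}(p), \dil{-1}(q) \in \P_n(\R_{\ge 0})$. For (v), \eqref{identityMult3a} yields $\dil{-1}(p) \boxtimes_n q = \dil{-1}(p \boxtimes_n q)$; the left-hand side is in $\P_n(\R_{\ge 0})$ by (iii), so $p \boxtimes_n q \in \P_n(\R_{\le 0})$.

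The main obstacle in this plan is (ii): Szeg\H{o}'s theorem is classical but non-trivial, with its standard proof relying on Grace's apolarity theorem; the MSS route in the multiplicative case likewise needs a separate spectral-transfer step, exploiting $q \in \P_n(\R_{\ge 0}) \Leftrightarrow B \succeq 0$ to rewrite $A\cdot UBU^{*}$ as the Hermitian matrix $B^{1/2} U^{*} A U B^{1/2}$ up to similarity. Since both approaches are well-established, the plan is to invoke them rather than re-derive them, in accordance with the attributions in the statement of the proposition.
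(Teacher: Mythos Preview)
The paper does not prove this proposition: it is stated in the preliminaries with attributions to Szeg\H{o}, Walsh, and \cite{martinez2023real}, and is used thereafter as a black box. So there is no ``paper's own proof'' to compare against.

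Your outline is sound. The reduction of (iv) and (v) to (iii) via $\dil{-1}$ and \eqref{identityMult3a}--\eqref{identityMult3b} is clean and exactly how these sign variants are usually handled. The sign argument for (iii) is correct: with $e_k(p), e_k(q)\ge 0$ one gets $[p\boxtimes_n q](-t)=(-1)^n\sum_k e_k(p\boxtimes_n q)\,t^{n-k}$, which has definite sign for $t>0$, so no negative roots, and (ii) supplies real-rootedness. For (i) and (ii) you correctly identify the classical sources (Walsh's coincidence/Grace apolarity for the additive case, Szeg\H{o}'s composition theorem for the multiplicative case) and the modern MSS alternative; invoking rather than reproving them is appropriate here and matches the paper's own stance. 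One minor wording point: in (iii), ``$\P_n(\R_{\ge 0})$ is closed under sign changes'' is a slightly odd way to say that membership depends only on the zero set, not the leading coefficient---but the intended normalization to monic is clear and valid.
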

If we replace above the sets $\R_{\ge 0}$ and $\R_{\le 0}$ by strict inclusions, $\R_{>0}$ and $\R_{<0}$, respectively, the statements of Proposition~\ref{prop:realrootedness} remain valid.

\begin{definition}[Interlacing] 
Let
$$
p(x)=e_0(p)\prod_{j=1}^n \left(x-\lambda_j(p)\right) \in \P_n(\R), \quad \lambda_1(p) \leq \dots \leq \lambda_n(p),
$$
and 
$$
q(x)=e_0(q)\prod_{j=1}^m \left(x-\lambda_j(q)\right) \in \P_m(\R), \quad \lambda_1(q) \leq \dots \leq \lambda_m(q).
$$
We say that $q$ \textbf{interlaces} $p$ (or, equivalently, that \textbf{zeros of $q$ interlace zeros of $p$}, see, e.g.,~\cite{MR3051099}), and denote it $p \preccurlyeq q$, if 
\begin{equation} \label{interlacing1}
    m=n \quad \text{and} \quad \lambda_1(p) \leq \lambda_1(q) \leq \lambda_2(p) \leq \lambda_2(q) \leq \cdots \leq  \lambda_n(p) \leq \lambda_n(q),
\end{equation}
or if
\begin{equation} \label{interlacing2}
    m=n-1 \quad \text{and} \quad \lambda_1(p) \leq \lambda_1(q) \leq \lambda_2(p) \leq \lambda_2(q) \leq \cdots \leq  \lambda_{n-1}(p) \leq \lambda_{n-1}(q) \le \lambda_n(p).
\end{equation}
Furthermore, we use the notation $p \prec q$ when all inequalities in \eqref{interlacing1} or \eqref{interlacing2} are strict.
\end{definition}

From the real-root preservation and the linearity of the free finite convolution one easily obtains the following interlacing-preservation property:
\begin{proposition}[Preservation of interlacing]
\label{lem:preservinginterlacingMult}
If $p, \widetilde{p} \in \P_n(\R)$ be of degree exactly $n$ such that $p \preccurlyeq \widetilde{p}$, then for a polynomial $q$ of degree $n$,
$$
q \in \P_n(\R) \quad  \Rightarrow \quad p\boxplus_n q \preccurlyeq \widetilde{p} \boxplus_n q
$$
and 
$$
q \in \P_n(\R_{\ge 0}) \quad  \Rightarrow \quad p\boxtimes_n q \preccurlyeq \widetilde{p} \boxtimes_n q.
$$
The same statements hold if we replace all $\preccurlyeq$ by $\prec$.
\end{proposition}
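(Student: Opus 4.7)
My plan is to derive both the additive and multiplicative versions from a single principle: reduce interlacing to real-rootedness of convex combinations, then exploit the bi-linearity of $\boxplus_n$ and $\boxtimes_n$. The key classical tool is the \emph{Hermite--Kakeya--Obreschkoff (HKO) theorem}: for two polynomials $p, \widetilde p \in \P_n(\R)$ of degree exactly $n$ with leading coefficients of the same sign, one has $p \preccurlyeq \widetilde p$ or $\widetilde p \preccurlyeq p$ if and only if every convex combination $\alpha p + (1-\alpha)\widetilde p$ with $\alpha \in [0,1]$ is real-rooted; a strict version characterises strict interlacing by requiring these combinations to have only simple zeros for $\alpha \in (0,1)$.

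Applying the forward direction of HKO to the hypothesis $p \preccurlyeq \widetilde p$ gives $p_\alpha \isdef \alpha p + (1-\alpha)\widetilde p \in \P_n(\R)$ for every $\alpha \in [0,1]$. Bi-linearity of the additive convolution yields
$$
p_\alpha \boxplus_n q = \alpha\,(p \boxplus_n q) + (1-\alpha)(\widetilde p \boxplus_n q),
$$
and Proposition~\ref{prop:realrootedness}(i) shows this is real-rooted when $q \in \P_n(\R)$. Thus every convex combination of $p \boxplus_n q$ and $\widetilde p \boxplus_n q$ is real-rooted, so the converse direction of HKO delivers interlacing of these two polynomials. The multiplicative case proceeds identically, substituting $\boxtimes_n$ for $\boxplus_n$ and invoking Proposition~\ref{prop:realrootedness}(ii) with the hypothesis $q \in \P_n(\R_{\ge 0})$.

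To pin down the \emph{direction} of interlacing -- namely $p \boxplus_n q \preccurlyeq \widetilde p \boxplus_n q$ rather than the reverse -- I would run a continuity argument along the homotopy $s \mapsto \widetilde p_s \isdef (1-s)p + s\widetilde p$ for $s \in [0,1]$, starting from $\widetilde p_0 = p$, where the two convolutions agree and the interlacing is trivial. Along this path $\widetilde p_s$ is real-rooted, so its $k$-th zero moves continuously from $\lambda_k(p)$ to $\lambda_k(\widetilde p)$; the hypothesis $\lambda_k(p) \le \lambda_k(\widetilde p)$ fixes the motion in the correct direction. Applying the same continuity to $\widetilde p_s \boxplus_n q$ (real-rooted at every $s$ by the argument above) transports the weak interlacing from $s=0$ to $s=1$. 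The strict case combines the strict form of HKO with a perturbation: approximate $p, \widetilde p$ by nearby strictly interlacing polynomials, deduce the weak version, and rule out a coincidence $\lambda_k(p \boxplus_n q) = \lambda_k(\widetilde p \boxplus_n q)$ by observing that such a common zero would persist across the entire family $\{p_\alpha \boxplus_n q\}_{\alpha \in [0,1]}$, contradicting the simple-zero condition imposed by strict HKO on the perturbed inputs and passing to the limit.

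The main obstacle I anticipate is precisely this orientation step: HKO is symmetric in its two arguments and does not on its own distinguish $p \preccurlyeq \widetilde p$ from $\widetilde p \preccurlyeq p$. The homotopy argument works because real-rootedness is maintained along the entire segment, preventing zeros from leaving the real line or colliding with neighbours; but the bookkeeping must be done with care, especially in the weak case where shared zeros between $p$ and $\widetilde p$ are permitted. Fortunately, Proposition~\ref{prop:realrootedness} furnishes exactly the real-rootedness input required at every intermediate point of the homotopy, reducing the whole argument to well-known facts about real-rooted polynomials.
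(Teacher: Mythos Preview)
Your proposal is correct and follows exactly the approach the paper indicates: immediately before the proposition the authors write ``From the real-root preservation and the linearity of the free finite convolution one easily obtains the following interlacing-preservation property,'' and then cite \cite[Proposition~2.11]{martinez2023real} for the details; your reduction via Hermite--Kakeya--Obreschkoff combined with bi-linearity and Proposition~\ref{prop:realrootedness} is precisely this argument. The orientation step you flag can be shortened in the additive case by noting that $\sum_k \lambda_k(r\boxplus_n q)=\sum_k\lambda_k(r)+\sum_k\lambda_k(q)$, so the trace inequality inherited from $p\preccurlyeq\widetilde p$ forces the correct direction; your homotopy argument is also fine and is the standard device in the multiplicative case.
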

For a proof of this result, see \cite[Proposition 2.11]{martinez2023real}. 
 
\begin{remark}\label{preserveinterlacing.negativezeros}
Noticing that $p \preccurlyeq q$ if and only if $q(-x) \preccurlyeq p(-x)$ and that $p(x) \boxtimes_n q(-x)=[p\boxtimes_n q] (-x)$, we can easily extend Proposition \ref{lem:preservinginterlacingMult} to include polynomials with negative real roots. Namely, if $p, \widetilde{p}\in \monicpolreal$ and $p \preccurlyeq \widetilde{p}$, then for a polynomial $q$ of degree $n$, 
$$
q \in \P_n(\R_{\le 0}) \quad  \Rightarrow \quad \widetilde{p} \boxtimes_n q \preccurlyeq p \boxtimes_n q.
$$
\end{remark}

\subsection{Integral transforms and free convolution of measures} \

We follow the standard notation from the literature on free probability (see,  e.g.~\cite{arizmendi2021finite, MR4408504, MR3585560, MR2266879}) and define several transforms for a Borel measure $\mu$.

The \textbf{Cauchy transform} of $\mu$,
	\begin{equation} \label{def:Cauchy}
		\mathcal G_\mu(z)\isdef  \int \frac{d\mu(y)}{z-y},
	\end{equation}
is well defined and analytic on $\C \setminus \supp(\mu)$, even in the case of a signed measure. If $\mu$ is compactly supported, then $\mathcal G_\mu$ has a Laurent expansion
	\begin{equation}
	    \label{expansionCauchyTr}
     \mathcal G_\mu(z) = \sum_{k=0}^\infty \frac{m_k}{z^{k+1}} ,
	\end{equation}
convergent in a neighborhood of infinity, where 
	$$
	m_j =\int z^j \, d\mu(z), \quad j=0, 1, 2,\dots
	$$
are the \textbf{moments} of the measure $\mu$. Unless specified otherwise, we assume in what follows that $\mu$ is a compactly supported positive probability measure, so that
	\begin{equation}
	    \label{normCondition}
     m_0=1.
	\end{equation}

The \textbf{$K$-transform} of the compactly supported probability measure $\mu$ is the functional inverse of its Cauchy transform, that is,
	$$
	\mathcal K_\mu(w)\isdef   \mathcal G^{-1}_\mu(w),
	$$
analytic in a punctured neighborhood of the origin, while its \textbf{$R$-transform} 
	$$
	\mathcal R_\mu(w)\isdef    \mathcal K_\mu(w)-\frac{1}{w} =\sum_{n=0}^\infty \kappa_{n+1} w^n,
	$$
is analytic in a neighborhood of the origin\footnote{\, Clearly, the analyticity of $\mathcal R_\mu$ at the origin for a compactly supported measure is equivalent to condition \eqref{normCondition}.}. The coefficients $\kappa_{n}$ are called the \textbf{free cumulants} of $\mu$. Notice that $\mathcal R_\mu \equiv 0$ if and only if $$\mathcal G_\mu(z)=1/z$$ in a neighborhood of infinity; this is the case of $\mu=\delta_0$, the Dirac delta or unit mass point at the origin, but not only: any unit Lebesgue measure on a circle or a disk centered at the origin has this property.
	
Closely related to the Cauchy transform is the \textbf{generating function of the moments} (for short, the \textbf{$M$-transform}) of a compactly-supported probability measure $\mu$,
 \begin{equation}
     \label{defMtransform}
     	\mathcal M_\mu(z)\isdef \frac{1}{z}  \mathcal G_\mu\left(\frac{1}{z}\right)-1 = \int \frac{zy \, d\mu(y)}{1-zy},
 \end{equation}
 for which the expansion 
\begin{equation}
    \label{expansionMtransform}
    \mathcal M_\mu(z) = \sum_{k=1}^\infty  m_k \, z^{k} 
\end{equation}
converges in a neighborhood of the origin. 
We finally define the \textbf{$S$-transform} of $\mu $ as
	\begin{equation}
	    \label{defStranform}
     \mathcal S_\mu(w)\isdef   \frac{w+1}{w} \mathcal M_\mu^{-1}(w),
	\end{equation}
	where $\mathcal M_\mu^{-1}$ is the functional inverse of $\mathcal M_\mu$ (when it exists). Notice that once again $\mathcal M_\mu\equiv 0$ if and only if $\mathcal G_\mu(z)=1/z$; furthermore, if $\mu$ is compactly supported and $m_1\neq 0$ then $\mathcal S_\mu$ is analytic in a neighborhood of origin and nonvanishing at $w=0$, with $ \mathcal S_\mu(0) =1/m_1. $
We can guarantee that $m_1\ne 0$ if $\mu$ is a positive probability measure supported on the positive (or negative) semiaxis. 

All these transforms determine each other via the identities
\begin{equation}
\label{eq.moment.Rtransform}
\left(z\MM_\mu(z)+z\right)\RR_\mu \left(z\MM_\mu(z)+z\right)=\MM_\mu(z), \qquad \mathcal G_\mu \left( \frac{z+1}{z \mathcal S_\mu(z)}\right) = z \mathcal S_\mu(z),
\end{equation}
which follow directly from the definitions above. Combining \eqref{defStranform} and \eqref{eq.moment.Rtransform} we obtain also a direct relation between the $R$-transform and the $S$-transforms (see for instance \cite[Equation (16.8), Definition 18.15 and Remark 18.16]{MR2266879}):
\begin{equation}
  \label{RtransformStranform}
w \mathcal S_\mu(w)=\left( w \mathcal R_\mu(w)\right)^{-1},
\end{equation}
where $\left( w \mathcal R_\mu(w)\right)^{-1}$ is the functional inverse of $w\mathcal R_\mu(w)$ (when it exists).

We will later use the well-known fact, see for instance \cite[Proposition 3.13]{HS07}, that for a measure $\mu$ such that $\mu(\{0\})=0$ it holds that
\begin{equation}
    \label{measureReversed}
S_\mu(z) S_{\mu^{*}}(-z-1)=1,  
\end{equation}
where $\mu^{*}$ is the \textbf{reversed measure} of $\mu$ (equivalently, the push-forward measure under the mapping $t \mapsto 1/t$), so that its density is $d\mu^*(t)=  t^{-2}d\mu( t^{-1})$). The identity \eqref{measureReversed} is valid in a punctured neighborhood of origin, in the domain of analyticity of both functions on the left-hand side.

\begin{remark}
    \label{rem:momentsequences}
When $\mu$ is not compactly supported, expressions \eqref{def:Cauchy} and \eqref{defMtransform} make sense and define analytic functions in the open set $\C\setminus \supp (\mu)$ and in its image by $z\mapsto 1/z$, respectively. Formula \eqref{defStranform} is also well-posed, at least where $\mathcal M_\mu$ is locally invertible. In this case, we cannot expect the series in \eqref{expansionCauchyTr} and \eqref{expansionMtransform} to converge.

On the other hand, the formulas in \eqref{expansionCauchyTr} and \eqref{expansionMtransform} can be considered as formal power series associated with a ``moment'' sequence $ m_0, m_1, \dots,  $ of complex numbers. In that case, if we denote \textit{the sequence} by $\mu$, we can use these formulas as a \textit{definition} of $\mathcal G_{\mu}$ and $\mathcal M_{\mu}$, as well as define $\mathcal R_{\mu} $ and $\mathcal S_\mu$ through \eqref{eq.moment.Rtransform}. In other words, all the transforms can be defined for any complex sequence but only as formal power series (also called moment series), with no analyticity or convergence assumed a priori. 

As we will see in Section~\ref{sec:finiteinAsymptotics}, the advantage of this approach is that many asymptotic results can be established even for formal moment sequences, without assuming any underlying measure. 
\end{remark}

Using these definitions, we can introduce two important operations on compactly supported probability measures $\mu$ and $\nu$ (or, as we have just discussed, on their moment sequences):
\begin{enumerate}
	\item[(i)] the \textbf{free additive convolution} $\mu \boxplus \nu$ via the identity
	\begin{equation}
	    \label{def:additiveconvolutionmeasures}
     \mathcal R_{\mu \boxplus \nu}(w) = \mathcal R_{\mu  }(w) +  \mathcal R_{ \nu}(w) ;
	\end{equation}
	\item[(ii)] the \textbf{free multiplicative convolution} $\mu \boxtimes \nu$ via the identity
	\begin{equation}
	    \label{def:multiplicativeconvolutionmeasures}
	\mathcal S_{\mu \boxtimes \nu}(w) = \mathcal S_{\mu  }(w)  \mathcal S_{ \nu}(w) .
	\end{equation}
\end{enumerate} 
Clearly, identities \eqref{def:additiveconvolutionmeasures}--\eqref{def:multiplicativeconvolutionmeasures} do not define the resulting measures $\mu \boxplus \nu$ and $\mu \boxtimes \nu$ uniquely, unless they are determined by the analytic expression of their $R-$ and $S-$transforms, respectively. This is the case of probability measures compactly supported on $\R$.

%%%%%%%%%%%%%%%%%%%%%%%%%%%%%%%%%%%%%%%%%%%%%%%%%%%%%%
\section{General results} \label{sec:generalresults}
%%%%%%%%%%%%%%%%%%%%%%%%%%%%%%%%%%%%%%%%%%%%%%%%%%%%%%

\subsection{Kampé de Fériet polynomials and finite free convolution} \label{sec:KdF}\

Recall the definition of the Kampé de Fériet polynomials in \eqref{KdFpolyns.mult}. 
The following factorization holds:
\begin{theorem} \label{thm:multipleKdF}
For $k=0,1,\dots, r$, let $i_k,j_k\in \N$, $c_k\neq 0$, and $\bm a_k\in \rr^{i_k}$, $\bm b_k\in \rr^{j_k}$. Then, for $n\in \N$,
\begin{align*}
   &  F_{j_0: j_1 ;\dots; j_r}^{i_0+1: i_1 ;\dots; i_r}\left[\left.\begin{array}{c}
-n,\bm a_0:\bm a_1 ; \dots ; \bm a_r \\
\bm b_0:\bm b_1 ;\dots ; \bm b_r
\end{array} \right\rvert\, c_1 x,c_2x,\dots,c_rx \right] 
  \simeq \left[q_0\boxtimes_n \left(q_1 \boxplus_n \cdots \boxplus_n q_r \right) \right]^*(x), 
\end{align*}
where
\begin{align*}
q_0(x)& \isdef \HGF{j_0+1}{i_0}{-n, -\bm b_0-n+1}{ -\bm a_0-n+1}{(-1)^{i_0+j_0}x},\\
q_l(x)& \isdef \HGF{j_l+1}{i_l}{-n, -\bm b_l-n+1}{ -\bm a_l-n+1}{(-1)^{i_l+j_l}\frac{x}{c_l}}, \quad \text{for }l=1,\dots,r .
\end{align*}
The proportionality constant can be computed explicitly in terms of the parameters.
\end{theorem}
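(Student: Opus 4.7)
My plan is to reduce the statement to the identity
\[
\text{KdF}(x) \simeq \HGF{i_0+1}{j_0}{-n, \bm a_0}{\bm b_0}{x} \boxtimes_n \left[q_1 \boxplus_n \cdots \boxplus_n q_r\right]^*,
\]
via two elementary coefficient checks: $q_0^* \simeq \HGF{i_0+1}{j_0}{-n, \bm a_0}{\bm b_0}{x}$ (the Pochhammer reflection $\raising{-a-n+1}{k} = (-1)^k \raising{a}{n}/\raising{a}{n-k}$, as used in the proof of Lemma~\ref{lemma:trick} when applied to a single factor), and $(p \boxtimes_n q)^* \simeq p^* \boxtimes_n q^*$, which follows directly from \eqref{coeffMultConv} together with the formula $e_j(p^*) = (-1)^n e_{n-j}(p)$.

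The heart of the proof is the explicit computation of the $r$-fold additive convolution. Iterating \eqref{coeffAdditiveConv} via associativity of $\boxplus_n$ gives
\[
e_k(q_1 \boxplus_n \cdots \boxplus_n q_r) = \falling{n}{k} \sum_{l_1+\cdots+l_r=k} \prod_{s=1}^r \frac{e_{l_s}(q_s)}{\falling{n}{l_s}},
\]
and I would then expand $e_i(q_s)$ directly from the hypergeometric series for $q_s$. The Pochhammer reflection above, combined with $\falling{n}{k} = (-1)^k \raising{-n}{k}$, yields $e_i(q_s)/\falling{n}{i} = C_s\, c_s^{-n}\, \raising{\bm a_s}{i}\, c_s^i /(\raising{\bm b_s}{i}\, i!)$ for some constant $C_s$ independent of $i$. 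Consequently, the coefficient of $x^{n-k}$ in $q_1 \boxplus_n \cdots \boxplus_n q_r$ equals, up to a $k$-independent constant, $(-1)^k \falling{n}{k}\, A_k$, where
\[
A_k \isdef \sum_{l_1+\cdots+l_r=k} \prod_{s=1}^r \frac{\raising{\bm a_s}{l_s}\, c_s^{l_s}}{\raising{\bm b_s}{l_s}\, l_s!}
\]
is exactly the inner multi-sum appearing in \eqref{KdFpolyns.mult}.

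Reciprocation reverses the coefficient order, so $[q_1 \boxplus_n \cdots \boxplus_n q_r]^*(x) \simeq \sum_k (-1)^k \falling{n}{k}\, A_k\, x^k$. Convolving with $\HGF{i_0+1}{j_0}{-n, \bm a_0}{\bm b_0}{x}$ via \eqref{coeffMultConv} and simplifying with $\binom{n}{k}^{-1} \cdot n!/k! = (n-k)!$ produces a polynomial whose coefficient of $x^k$ is $(-1)^n \cdot \frac{\raising{-n}{k}\, \raising{\bm a_0}{k}}{\raising{\bm b_0}{k}}\, A_k$, matching the $x^k$-coefficient of the KdF polynomial up to the $k$-independent global sign $(-1)^n$.

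The main obstacle is the careful sign and constant bookkeeping. The signs $(-1)^{i_s+j_s}$ in the definition of $q_s$ are forced precisely by the Pochhammer reflection, and the $c_s$-dependence propagates via $e_i(q_s)/\falling{n}{i} \propto c_s^i$, so that the factors $c_s^{l_s}$ reassemble correctly inside $A_k$. Each individual step is a routine coefficient computation closely mirroring the scheme of Lemma~\ref{lemma:trick} extended from two factors to $r$; the explicit proportionality constant can be read off by tracking $\prod_s C_s c_s^{-n}$ together with the leading coefficient of $q_0$ and the global sign $(-1)^n$.
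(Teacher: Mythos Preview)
Your proposal is correct and follows essentially the same route as the paper: both proofs compute $e_k(q_l)$ via the Pochhammer reflection identity, iterate \eqref{coeffAdditiveConv} to obtain $e_k(q_1\boxplus_n\cdots\boxplus_n q_r)=\falling{n}{k}\,A_k$, apply \eqref{coeffMultConv} for the outer $\boxtimes_n$, and match coefficients with \eqref{KdFpolyns.mult}. The only difference is organizational: the paper computes the coefficients of $q_0\boxtimes_n(q_1\boxplus_n\cdots\boxplus_n q_r)$ directly and takes the reversal at the very end, whereas you first pull the reversal inside via $(p\boxtimes_n q)^*\simeq p^*\boxtimes_n q^*$ and identify $q_0^*$ with $\HGF{i_0+1}{j_0}{-n,\bm a_0}{\bm b_0}{x}$ before convolving---a cosmetic reordering of the same computation.
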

\begin{proof}
Using the notation in \eqref{defFMULTCONV}, we can write the coefficients of $q_0,q_1,\dots, q_r$ (up to a multiplicative constant) as
$$
e_k(q_0)=\binom{n}{k}\frac{ \raising{\bm a_0}{k}}{\raising{\bm b_0}{k}}, \qquad 
e_k(q_l)=\binom{n}{k}\frac{ \raising{\bm a_l}{k}} { \raising{\bm b_l}{k}} \, c_l^k  \qquad \text{for }l=1,\dots, r.
$$
By \eqref{coeffAdditiveConv}, 
$$
e_k(q_1\boxplus_n \cdots \boxplus_n q_r)= \falling{n}{k} \sum_{l_1+\dots+l_r=k} \frac{c_1^{l_1} \raising{\bm a_1}{l_1} } {l_1! \raising{\bm b_1}{l_1}} \cdots \frac{c_r^{l_r} \raising{\bm a_r}{l_r} } {l_r! \raising{\bm b_r}{l_r}} ,
$$
and by \eqref{coeffMultConv}, 
$$e_k(p\boxtimes_n(q_1\boxplus_n \cdots \boxplus_n q_r))= \frac{ \raising{\bm a_0}{k}}{\raising{\bm b_0}{k}}\falling{n}{k} \sum_{l_1+\dots+l_r=k} \frac{c_1^{l_1} \raising{\bm a_1}{l_1} } {l_1! \raising{\bm b_1}{l_1}} \cdots \frac{c_r^{l_r} \raising{\bm a_r}{l_r} } {l_r! \raising{\bm b_r}{l_r}} .
$$
Thus, 
\begin{align*}
p\boxtimes_n(q_1\boxplus_n \cdots \boxplus_n q_r) 
&\simeq  \sum_{k=0}^n x^{n-k}(-1)^k \frac{ \raising{\bm a_0}{k}}{\raising{\bm b_0}{k}} \falling{n}{k}  \sum_{l_1+\dots+l_r=k} \frac{c_1^{l_1} \raising{\bm a_1}{l_1} } {l_1! \raising{\bm b_1}{l_1}} \cdots \frac{c_r^{l_r} \raising{\bm a_r}{l_r} } {l_r! \raising{\bm b_r}{l_r}}  \\
&= \sum_{k=0}^n x^{n-k} \sum_{l_1+\dots+ l_r= k}  \frac{\raising{-n}{k} \raising{\bm a_0}{k}}{\raising{\bm b_0}{k}}\frac{c_1^{l_1} \raising{\bm a_1}{l_1} } {l_1! \raising{\bm b_1}{l_1}} \cdots \frac{c_r^{l_r} \raising{\bm a_r}{l_r} } {l_r! \raising{\bm b_r}{l_r}} .
\end{align*}
The result follows from \eqref{KdFpolyns.mult} and the definition of the reversed polynomials \eqref{defReversed}.
\end{proof}

Another representation is obtained if we fix all but one variable of a Kampé de Fériet polynomial:
\begin{theorem} \label{thm:multipleKdFfactorization}
For $k=0,1,\dots, r$, let $i_k,j_k\in \N$,  $c_k\neq 0$, and  $\bm a_k\in \rr^{i_k}$, $\bm b_k\in \rr^{j_k}$. Then, for $n\in \N$, 
\begin{align*}
   &  F_{j_0: j_1 ;\dots; j_r}^{i_0+1: i_1 ;\dots; i_r}\left[\left.\begin{array}{c}
-n,\bm a_0:\bm a_1 ; \dots ; \bm a_r \\
\bm b_0:\bm b_1 ;\dots ; \bm b_r
\end{array} \right\rvert\, c_1 x,c_2,\dots,c_r \right] 
  \simeq \left[q_1 \boxtimes_n (q_0\boxplus_n q_2\boxplus_n \cdots \boxplus_n q_r)\right](x), 
\end{align*}
where
\begin{align*}
q_0(x)& \isdef \HGF{i_0+1}{j_0}{-n, \bm a_0}{ \bm b_0}{-x}, \qquad  q_1(x) \isdef \HGF{i_1+1}{j_1}{-n, \bm a_1}{ \bm b_1}{-c_1x},\quad \text{and} \\
q_l(x)& \isdef \HGF{j_l+1}{i_l}{-n, -\bm b_l-n+1}{ -\bm a_l-n+1}{(-1)^{i_l+j_l}\frac{x}{c_l}}  \qquad \text{for }l=2,\dots, r.
\end{align*}
The proportionality constant can be computed explicitly in terms of the parameters.
\end{theorem}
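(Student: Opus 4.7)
The plan is to verify the factorization by matching, up to a constant independent of $l_1$, the coefficients of $x^{l_1}$ on both sides. Since in the Kampé de F\'eriet series only the first argument $c_1 x$ carries $x$, a direct expansion of the LHS via \eqref{KdFpolyns.mult}, followed by the re-indexing $l_0\isdef n - l_1 - l_2 - \cdots - l_r$ in the remaining multinomial sum, produces
\begin{align*}
[x^{l_1}]\,\text{LHS}= \frac{\raising{\bm a_1}{l_1}\,c_1^{l_1}}{\raising{\bm b_1}{l_1}\,l_1!}\sum_{l_0+l_2+\cdots+l_r=n-l_1} \raising{-n}{n-l_0}\,\frac{\raising{\bm a_0}{n-l_0}}{\raising{\bm b_0}{n-l_0}}\prod_{s=2}^r \frac{\raising{\bm a_s}{l_s}\,c_s^{l_s}}{\raising{\bm b_s}{l_s}\,l_s!}.
\end{align*}

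For the RHS, the plan is to compute first the coefficients $e_j(q_l)$, $l=0,1,\dots,r$, in the canonical form \eqref{defFMULTCONV}. Since $q_0$ and $q_1$ are in standard hypergeometric form with argument $-x$, respectively $-c_1 x$, a direct expansion and the substitution $k=n-j$ give $e_j(q_0)=(-1)^j\binom{n}{j}\raising{\bm a_0}{n-j}/\raising{\bm b_0}{n-j}$ and the analogous formula for $q_1$ (with an extra factor $c_1^{n-j}$). The polynomials $q_l$, $l\ge 2$, are instead in the dual form; for them the Pochhammer reflection identity
$$ \raising{-\bm a-n+1}{k}=(-1)^{ik}\,\raising{\bm a}{n}/\raising{\bm a}{n-k},\qquad \bm a\in\R^i, $$
applied to both numerator and denominator parameters, together with the substitution $k=n-j$, makes the sign factors telescope and yields $e_j(q_l)\propto \binom{n}{j}\raising{\bm a_l}{j}c_l^{j}/\raising{\bm b_l}{j}$ with explicit proportionality constant $(-1)^n\raising{\bm b_l}{n}/(c_l^n\raising{\bm a_l}{n})$, structurally identical to the $e_j$'s that appeared in the proof of Theorem~\ref{thm:multipleKdF}.

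Substituting into the iterated form of \eqref{coeffAdditiveConv},
\begin{align*}
e_k(q_0\boxplus_n q_2\boxplus_n\cdots\boxplus_n q_r)=\falling{n}{k}\sum_{l_0+l_2+\cdots+l_r=k}\frac{e_{l_0}(q_0)}{\falling{n}{l_0}}\prod_{s=2}^{r}\frac{e_{l_s}(q_s)}{\falling{n}{l_s}},
\end{align*}
and then combining with $q_1$ through \eqref{coeffMultConv}, the collapses $\binom{n}{k}^{-1}\falling{n}{k}=k!$ and $\falling{n}{n-l_1}=n!/l_1!$ clear out all binomial coefficients. Reading off the coefficient of $x^{l_1}=x^{n-k}$ (with $k=n-l_1$) and invoking the elementary identity $\raising{-n}{n-l_0}=(-1)^{n-l_0}n!/l_0!$ matches the two summands term by term, leaving an overall factor $\prod_{s=2}^r (-1)^n\raising{\bm b_s}{n}/(c_s^n\raising{\bm a_s}{n})$ that is visibly independent of $l_1$ and provides both the desired proportionality and the explicit constant. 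Conceptually the argument mirrors the proof of Theorem~\ref{thm:multipleKdF}; the real difficulty is bookkeeping---tracking the signs from the dual-form identity against the $(-1)^{i_l+j_l}$ built into $q_l$, correctly distributing the powers of $c_s$ between $e_j(q_l)$ and the overall normalization, and translating the outer summation index of the Kampé de F\'eriet series into the ``missing'' index $l_0$ that appears on the convolution side.
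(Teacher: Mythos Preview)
Your proposal is correct and follows essentially the same approach as the paper's proof: both compute the coefficients $e_k$ of each $q_l$ in the canonical form \eqref{defFMULTCONV}, feed them into the additive-then-multiplicative convolution formulas \eqref{coeffAdditiveConv}--\eqref{coeffMultConv}, and compare with the direct expansion \eqref{KdFpolyns.mult} of the Kamp\'e de F\'eriet polynomial after the same re-indexing $l_0=n-l_1-l_2-\cdots-l_r$. The only cosmetic difference is that the paper absorbs the Pochhammer reflection and sign bookkeeping into the phrase ``up to a multiplicative constant'' from the outset, whereas you spell out the constant $\prod_{s=2}^r(-1)^n\raising{\bm b_s}{n}/(c_s^n\raising{\bm a_s}{n})$ explicitly.
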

\begin{proof}
Using the notation in \eqref{defFMULTCONV}, we can write the coefficients of $q_0,q_1,\dots, q_r$, (up to a multiplicative constant) as
\begin{align*}
e_k(p)&=\binom{n}{k}(-1)^{n-k}\frac{ \raising{\bm a_0}{n-k}}{\raising{\bm b_0}{n-k}}, \qquad 
e_k(q_1) =\binom{n}{k}(-1)^k\frac{c_1^{n-k} \raising{\bm a_1}{n-k}}{\raising{\bm b_1}{n-k}}, \\
e_k(q_l)&=\binom{n}{k}\frac{ c_l^k  \raising{\bm a_l}{k}} { \raising{\bm b_l}{k}} \qquad \text{for }l=2,\dots, r.
\end{align*}
By \eqref{coeffAdditiveConv}, 
\begin{align*}
e_l(q_0\boxplus_n q_2\boxplus_n \cdots \boxplus_n q_r)&= \falling{n}{l} \sum_{k+l_2+\dots+l_r=l} \frac{(-1)^{n-k}\raising{\bm a_0}{n-k}}{k!\raising{\bm b_0}{n-k}}\frac{ c_2^{l_2}\raising{\bm a_2}{l_2}} {l_2! \raising{\bm b_2}{l_2}} \cdots \frac{ c_r^{l_r}\raising{\bm a_r}{l_r}} {l_r! \raising{\bm b_r}{l_r}} \\
&= \frac{1}{(n-l)!} \sum_{k+l_2+\dots+l_r=l} \frac{\raising{-n}{n-k}\raising{\bm a_0}{n-k}}{\raising{\bm b_0}{n-k}}\frac{ c_2^{l_2}\raising{\bm a_2}{l_2}} {l_2! \raising{\bm b_2}{l_2}} \cdots \frac{ c_r^{l_r}\raising{\bm a_r}{l_r}} {l_r! \raising{\bm b_r}{l_r}}.
\end{align*}
From \eqref{coeffMultConv} we get that 
\begin{align}
&e_l\left[q_1 \boxtimes_n (q_0\boxplus_n q_2\boxplus_n \cdots \boxplus_n q_r)\right] \nonumber \\
&=\frac{ (-1)^lc_1^{n-l}}{(n-l)!}\frac{ \raising{\bm a_1}{n-l}}{\raising{\bm b_1}{n-l}}\sum_{k+l_2+\dots+l_r=l} \frac{\raising{-n}{n-k}\raising{\bm a_0}{n-k}}{\raising{\bm b_0}{n-k}}\frac{ c_2^{l_2}\raising{\bm a_2}{l_2}} {l_2! \raising{\bm b_2}{l_2}} \cdots \frac{ c_r^{l_r}\raising{\bm a_r}{l_r}} {l_r! \raising{\bm b_r}{l_r}} .\label{eq.proof.KdeF.general}
\end{align}

On the other hand, using the definition of the multivariable Kampé de Fériet polynomials \eqref{KdFpolyns.mult} and a change in the order of the summation, we obtain
\begin{align*}
& F_{j_0: j_1 ;\dots; j_r}^{i_0+1: i_1 ;\dots; i_r}\left[\left.\begin{array}{c}
-n,\bm a_0:\bm a_1 ; \dots ; \bm a_r \\
\bm b_0:\bm b_1 ;\dots ; \bm b_r
\end{array} \right\rvert\, c_1 x,c_2,\dots,c_r \right] \\
&=\sum_{l_1+\dots+ l_r+m=n}  \frac{\raising{-n}{n-m} \raising{\bm a_0}{n-m}}{\raising{\bm b_0}{n-m}}\frac{ \raising{\bm a_1}{l_1}x^{l_1}c_1^{l_1}} { \raising{\bm b_1}{l_1}l_1!}  \frac{ c_2^{l_2}\raising{\bm a_2}{l_2}} {l_2! \raising{\bm b_2}{l_2}} \cdots \frac{ c_r^{l_r}\raising{\bm a_r}{l_r}} {l_r! \raising{\bm b_r}{l_r}}  \\
&= \sum_{l_1=0}^n\frac{x^{l_1}c_1^{l_1}}{l_1!} \frac{ \raising{\bm a_1}{l_1}} { \raising{\bm b_1}{l_1}} 
\sum_{m+l_2+\dots+ l_r= n-l_1}  \frac{\raising{-n}{n-m} \raising{\bm a_0}{n-m}}{\raising{\bm b_0}{n-m}} \frac{ c_2^{l_2}\raising{\bm a_2}{l_2}} {l_2! \raising{\bm b_2}{l_2}} \cdots \frac{ c_r^{l_r}\raising{\bm a_r}{l_r}} {l_r! \raising{\bm b_r}{l_r}}.
\end{align*}
The coefficients of this polynomial, written in the form \eqref{defFMULTCONV}, match those in \eqref{eq.proof.KdeF.general} after changing variables $l=n-l_1$. The assertion follows.
\end{proof}
\begin{remark}
A straightforward consequence of both factorizations is that if $(-1)^{i_0+j_0}=(-1)^{i_1+j_1}$, then we get the following relation:

$$\left( F_{j_0: j_1 ;\dots; j_r}^{i_0+1: i_1 ;\dots; i_r}\left[\left.\begin{array}{c}
-n,\bm a_0:\bm a_1 ; \dots ; \bm a_r \\
\bm b_0:\bm b_1 ;\dots ; \bm b_r
\end{array} \right\rvert\,(-1)^{i_1+j_1}c_1 x,c_2x,\dots,c_r x\right] \right)^*$$
$$\simeq F_{i_1: i_0 ;j_2;\dots; j_r}^{j_1+1: j_0 ;i_2;\dots; i_r}\left[\left.\begin{array}{c}
-n,-\bm b_1-n+1:-\bm b_0-n+1 ; \bm a_2 ; \dots ; \bm a_r \\
-\bm a_1-n+1:-\bm a_0-n+1  ; \bm b_2\dots ; \bm b_r
\end{array} \right\rvert\,(-1)^{i_1+j_1}\frac{x}{c_1},\frac{c_2}{c_1},\dots,\frac{c_r}{c_1} \right] .
$$
By the symmetry in the variables of the polynomial, and using the previous relation twice, we get that if $(-1)^{i_0+j_0}=(-1)^{i_1+j_1}=(-1)^{i_2+j_2}$, then
$$F_{i_1: i_0 ;j_2;\dots; j_r}^{j_1+1: j_0 ;i_2;\dots; i_r}\left[\left.\begin{array}{c}
-n,-\bm b_1-n+1:-\bm b_0-n+1 ; \bm a_2 ; \dots ; \bm a_r \\
-\bm a_1-n+1:-\bm a_0-n+1  ; \bm b_2\dots ; \bm b_r
\end{array} \right\rvert\,\frac{x}{c_1},\frac{c_2}{c_1},\dots,\frac{c_r}{c_1} \right] 
$$
$$\simeq F_{i_2: i_0; j_1; j_3;\dots; j_r}^{j_2+1: ;j_0 ; i_1 ; i_3;\dots; i_r}\left[\left.\begin{array}{c}
-n,-\bm b_2-n+1:-\bm b_0-n+1 ; \bm a_1  ; \bm a_3; \dots ; \bm a_r \\
-\bm a_2-n+1:-\bm a_0-n+1  ; \bm b_1 ; \bm b_3;\dots ; \bm b_r
\end{array} \right\rvert\,\frac{x}{c_2},\frac{c_1}{c_2},\dots,\frac{c_r}{c_2}\right] .
$$
We are not aware of this kind of relations in the literature.
\end{remark}

\subsection{Finite free convolutions in the asymptotic regime} \label{sec:finiteinAsymptotics}
\

We have already seen that free finite convolution is useful when studying the zeros of polynomials. An additional crucial advantage is that finite free convolutions tend to free convolution in the asymptotic regime when the degree $n\to \infty$.

The connection between the convolutions of polynomials and free probability (reason for the name of ``finite free'' convolutions) was first noticed by Marcus, Spielman, and Srivastava in \cite{MR4408504} when they used Voiculescu's $R$- and $S$-transform to improve the bounds on the largest root of a convolution of two real-rooted polynomials. This connection was explored further in \cite{marcus}, where Marcus defined a finite analog of the $R$- and $S$-transform that are related to Voiculescu's transforms in the limit. Using finite free cumulants, Arizmendi and Perales \cite{arizmendi2018cumulants} showed that in the asymptotic regime, a finite free additive convolution becomes a free additive convolution. This was later extended to the multiplicative convolution by Arizmendi, Garza-Vargas and Perales \cite{arizmendi2021finite}.

Since the proofs of these facts in \cite{arizmendi2018cumulants,arizmendi2021finite} are essentially algebraic, it turns out that they are applicable in a broader context of moment sequences instead of measures; see Remark~\ref{rem:momentsequences}. More precisely, we need to consider the normalized zero-counting measures for a sequence of polynomials, real-rooted or not, under the only assumptions that their moments converge.

Given a polynomial $p$ of degree $n$ and roots $\lambda_j(p)$, $j=1, \dots, n$ (not necessarily all distinct or real),  its (normalized) \textbf{zero counting measure} (or \textbf{empirical root distribution} of $p$) is
\begin{equation}
    \label{def:zerocountingmeasure}
    \chi (p)\isdef \frac{1}{n}\sum_{j=1}^n \delta_{\lambda_j(p)},
\end{equation}
where $\delta_z$ is the Dirac delta (unit mass) placed at the point $z$. The corresponding moments of $\chi(p)$ (which we also call the moments of $p$, stretching the terminology a bit) are
$$
m_k(p)\isdef \frac{1}{n}\sum_{j=1}^n \lambda_j^k(p)=\int x^k \, d\chi(p), \quad k=0, 1, 2,\dots
$$

As mentioned above, the connection between finite and standard free probability is revealed in the asymptotic regime, when we let the degree $n\to \infty$. We say that the sequence of polynomials $ \mathfrak p= \left(p_n\right)_{n=1}^{\infty}$ such that each $p_n$ is real-rooted and of degree exactly $n$ \textbf{converges in moments} if all finite limits
\begin{equation}
    \label{def:convergenceMoments}
m_k=\lim_{n\to\infty} m_k(p_n), \qquad  k= 0, 1,2,\dots
\end{equation}
exist. If the supports of $\chi(p_n)$ are contained in a compact set of the complex plane, then the sequence $\chi(p_n)$ is weakly compact and $m_0, m_1, \dots$ are the moment of any of its accumulation points. In this case, \eqref{def:convergenceMoments} is equivalent to existence of a probability measure $\nu(\mathfrak p)$, compactly supported on $\C$,  with all its moments finite such that
$$
\lim_{n\to\infty} m_k(p_n)=m_k(\nu (\mathfrak p)), \qquad  k= 0, 1,2,\dots
$$
If we know additionally that $\chi(p_n)$ are on $\R$, with their supports contained in the same compact, and if the moment problem for $\nu (\mathfrak p)$ is determined, then \eqref{def:convergenceMoments} is equivalent to the weak-* convergence of the sequence $\chi (p_n)$ to $\nu (\mathfrak p)$.

For real-rooted polynomials, the following proposition is a direct consequence of \cite[Corollary 5.5]{arizmendi2018cumulants} and \cite[Theorem 1.4]{arizmendi2021finite}:

\begin{proposition}  
    \label{prop:finiteAsymptotics}
    Let $\mathfrak p:=\left(p_n\right)_{n=1}^{\infty}$ and $\mathfrak  q:=\left(q_n\right)_{n=1}^{\infty}$ be two sequences of real-rooted polynomials as above, and let $\nu(\mathfrak p)$ and $\nu(\mathfrak q)$ be two compactly supported probability Borel measures on $\R$ such that $\mathfrak p$ (respectively, $\mathfrak q$) converges in moments to $\nu(\mathfrak p)$ (respectively, $\nu(\mathfrak q)$). Then
    \begin{enumerate}
        \item[(i)]  the sequence $\left(p_n \boxplus_n q_n\right)_{n=1}^{\infty}$ converges in moments to $\nu(\mathfrak p) \boxplus \nu(\mathfrak q)$;
        \item[(ii)] if, additionally, for all sufficiently large $n$, $p_n, q_n\subset \pp_n(\rr_{>0})$ (or if $p_n, q_n\subset \pp_n(\rr_{<0})$) then the sequence $\left(p_n \boxtimes_n q_n\right)_{n=1}^{\infty}$ converges in moments to $\nu(\mathfrak p) \boxtimes \nu(\mathfrak q)$.
    \end{enumerate}
\end{proposition}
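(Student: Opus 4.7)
The plan is to reduce the statement to the combinatorial machinery of finite free cumulants developed in \cite{arizmendi2018cumulants, arizmendi2021finite}, where the asymptotic transition from $\boxplus_n, \boxtimes_n$ to $\boxplus, \boxtimes$ is already established at the purely algebraic level of moment sequences (cf.\ Remark~\ref{rem:momentsequences}). The first step is to recall that to any $p\in\pp_n$ of exact degree $n$ one associates finite free additive cumulants $\kappa_k^{(n)}(p)$, obtained from $m_1(p),\dots,m_k(p)$ by a triangular linear transformation whose coefficients are rational in $n$ and converge, as $n\to\infty$, to the coefficients of the classical free cumulant-moment relation; an analogous statement holds for the ``finite $S$-transform'' of \cite{arizmendi2021finite}. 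The two key algebraic facts are that $\kappa_k^{(n)}$ is additive under $\boxplus_n$ and that the finite $S$-transform is multiplicative under $\boxtimes_n$.

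For part (i), I would argue as follows. The hypothesis $m_k(p_n)\to m_k(\nu(\mathfrak p))$ (and likewise for $\mathfrak q$), combined with the asymptotic matching of cumulant-moment relations, yields $\kappa_k^{(n)}(p_n)\to \kappa_k(\nu(\mathfrak p))$ for each fixed $k$. Additivity under $\boxplus_n$ and the linearization property \eqref{def:additiveconvolutionmeasures} then give
\begin{equation*}
\kappa_k^{(n)}(p_n\boxplus_n q_n)\ \longrightarrow\ \kappa_k(\nu(\mathfrak p))+\kappa_k(\nu(\mathfrak q))=\kappa_k\bigl(\nu(\mathfrak p)\boxplus \nu(\mathfrak q)\bigr).
\end{equation*}
Inverting the limiting moment-cumulant relation recovers $m_k(p_n\boxplus_n q_n)\to m_k(\nu(\mathfrak p)\boxplus \nu(\mathfrak q))$ for every $k$, which is the required convergence in moments.

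Part (ii) follows the same scheme with the finite $S$-transform in place of $\kappa_k^{(n)}$ and \eqref{def:multiplicativeconvolutionmeasures} in place of \eqref{def:additiveconvolutionmeasures}. The sign hypothesis on the roots of $p_n, q_n$ enters precisely at the point where one needs $m_1(\chi(p_n))$ and $m_1(\chi(q_n))$ to stay bounded away from $0$, with a common sign, as $n\to\infty$, so that $\MM_{\chi(p_n)}$ and $\MM_{\chi(q_n)}$ are locally invertible at the origin uniformly in $n$ and their $S$-transforms are well defined on a common neighborhood of $w=0$. I expect this to be the main technical obstacle: without a sign (or support) restriction the inversion required to define $\mathcal S_{\chi(p_n)}$ can degenerate and the clean multiplicative relation breaks down.

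Once the above is in place, the proposition is essentially a direct translation of \cite[Corollary 5.5]{arizmendi2018cumulants} and \cite[Theorem 1.4]{arizmendi2021finite} into the language of this paper, so the only verification left is that the formal moment-sequence framework of those references, rather than a uniform compact-support assumption on $\chi(p_n)$, is exactly what is needed here; this is true because only finitely many moments enter into any individual $\kappa_k^{(n)}$ or finite-$S$ coefficient, so the limit can be taken term by term without an envelope hypothesis.
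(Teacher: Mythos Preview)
Your proposal is correct and aligns with the paper's treatment: the paper does not give a proof of this proposition but simply states it as a direct consequence of \cite[Corollary~5.5]{arizmendi2018cumulants} and \cite[Theorem~1.4]{arizmendi2021finite}, and your outline accurately reconstructs the finite-free-cumulant argument behind those references (which the paper also sketches in Appendix~\ref{sec:appendix} for the more general Theorem~\ref{prop:finiteAsymptotics.updated}).

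One small correction for part~(ii): both \cite{arizmendi2021finite} and the paper's Appendix work directly with the cumulant formula
\[
\kappa^{(n)}_j(p\boxtimes_n q)=\sum_{\pi\in\mathcal{NC}(j)}\kappa^{(n)}_\pi(p)\,\kappa^{(n)}_{Kr(\pi)}(q)+O(1/n)
\]
rather than a finite $S$-transform that is exactly multiplicative under $\boxtimes_n$. This matters for your remark on the sign hypothesis: in the cumulant route no invertibility of $\mathcal M$ is required, so the moment-level convergence holds without any sign restriction (as Theorem~\ref{prop:finiteAsymptotics.updated} confirms). The positivity assumption on the roots is there only so that $\nu(\mathfrak p)\boxtimes\nu(\mathfrak q)$ is a well-defined compactly supported probability measure on $\R$, not to make the limiting argument go through.
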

In other words, in the case of real-rooted polynomials, as $n\to \infty$, we can replace the finite free convolution of polynomials by the standard free convolution of measures supported on $\R$. The motivation to consider only real-rooted polynomials is given by their application in free probability. However, the proof of Proposition~\ref{prop:finiteAsymptotics} is basically algebraic, based on the notion of convergence in moments, and can be easily extended to more general cases, such as polynomials with uniformly bounded, but not necessarily real, roots. 

Namely, we have the following result:
\begin{theorem}
\label{prop:finiteAsymptotics.updated}
Let $\mathfrak p:=\left(p_n\right)_{n=1}^{\infty}$ and $\mathfrak  q:=\left(q_n\right)_{n=1}^{\infty}$ be two sequences of polynomials, such that $\deg p_n = \deg q_n=n$, $n\in \Z_{\ge 0}$, and all zeros of both $\mathfrak p$ and $\mathfrak q$ are uniformly bounded.

If, additionally, all finite limits
$$
\lim_{n\to\infty} m_k(p_n)=\alpha_k\in \C \qquad \text{and}\qquad \lim_{n\to\infty} m_k(q_n)=\beta_k\in \C, \qquad  k= 0, 1,2,\dots,
$$
exist, then
\begin{enumerate}
\item[(i)] all moments of the sequence of polynomials $\left(p_n \boxplus_n q_n\right)_{n=1}^{\infty}$ have a finite limit,
$$
\gamma_k\isdef \lim_{n\to\infty} m_k(p_n \boxplus_n q_n)\in \cc  \qquad k= 0, 1,2,\dots,
$$
and the $R$-transform associated to the sequence $\left(\gamma_k\right)_{k=1}^{\infty}$
satisfies
$$\RR_\gamma(z)=\RR_{\alpha}(z)+\RR_{\beta}(z).$$
\item[(ii)] All moments of the sequence $\left(p_n \boxtimes_n q_n\right)_{n=1}^{\infty}$ have a limit
$$
\theta_k:=\lim_{n\to\infty} m_k(p_n \boxtimes_n q_n)\in\cc \qquad k= 0, 1,2,\dots,
$$
and the $S$-transform associated to the sequence $\left(\theta_k\right)_{k=1}^{\infty}$
satisfies
$$\SS_\theta(z)=\SS_{\alpha}(z)\SS_{\beta}(z).$$
\end{enumerate}
\end{theorem}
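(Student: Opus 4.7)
The plan is to adapt, essentially verbatim, the algebraic proofs of \cite{arizmendi2018cumulants, arizmendi2021finite} that establish the real-rooted analogue (Proposition~\ref{prop:finiteAsymptotics}) to the present complex-rooted setting, along the lines anticipated in Remark~\ref{rem:momentsequences}. The key observation is that the arguments in those references never exploit real-rootedness: they manipulate symmetric functions of the roots through the elementary symmetric polynomials $e_k(p)$, which are polynomial functions of the coefficients of $p$ and hence well-defined regardless of whether the zeros are real or complex. Every cumulant identity produced therein is therefore a polynomial identity in the coefficients of $p$ and $q$, and remains true for arbitrary $p,q\in\pp_n$.

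For part (i), I would introduce the $n$-th finite free cumulants $\kappa_k^{(n)}(p)$ of $p\in\pp_n$, defined via the moment-cumulant inversion on non-crossing partitions from \cite{arizmendi2018cumulants}. By construction $\kappa_k^{(n)}(p)$ is a polynomial in $m_1(p),\dots,m_k(p)$, and the additivity identity
\begin{equation*}
\kappa_k^{(n)}(p\boxplus_n q)=\kappa_k^{(n)}(p)+\kappa_k^{(n)}(q),\qquad 1\le k\le n,
\end{equation*}
is itself a polynomial identity in the coefficients of $p$ and $q$, so it survives the passage to complex roots. The uniform bound on the zeros furnishes a constant $C>0$ with $|m_k(p_n)|,|m_k(q_n)|\le C^k$ for all $n,k$, so for each fixed $k$ the sequences $\{\kappa_k^{(n)}(p_n)\}_n$ and $\{\kappa_k^{(n)}(q_n)\}_n$ converge termwise to limit cumulants $\widetilde\kappa_k(\alpha)$, $\widetilde\kappa_k(\beta)$ of the formal moment sequences $\alpha=(\alpha_k)_k$ and $\beta=(\beta_k)_k$. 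The additivity identity then passes to the limit, producing well-defined limits $\gamma_k$ whose $R$-transform satisfies $\RR_\gamma=\RR_\alpha+\RR_\beta$; the $\gamma_k$ themselves are recovered from $\RR_\gamma$ via \eqref{eq.moment.Rtransform}.

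For part (ii), I would run the same strategy with the multiplicative finite free cumulants of \cite{marcus, arizmendi2021finite}. The point of departure is the simple coefficient identity \eqref{coeffMultConv}, $e_k(p\boxtimes_n q)=\binom{n}{k}^{-1}e_k(p)e_k(q)$, which is algebraic and indifferent to the location of the roots. Translated into a product formula for the finite analog of the $S$-transform, and combined with the uniform bound $|e_k(p_n)|\le\binom{n}{k}C^k$ inherited from the hypothesis, this yields convergence of the moments $m_k(p_n\boxtimes_n q_n)$ to finite limits $\theta_k$ satisfying $\SS_\theta=\SS_\alpha\SS_\beta$. Working in the algebra of formal power series in the sense of Remark~\ref{rem:momentsequences} sidesteps the analytic requirement $m_1\neq 0$ normally needed to invert $\MM_\mu$. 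The main obstacle is the combinatorial bookkeeping of the finite-$n$ correction factors (the $\binom{n}{k}^{-1}$, $\falling{n}{k}$, and the weights attached to non-crossing partitions) that distinguish the finite free cumulants and the finite $S$-transform from their classical counterparts, together with the verification that each of them converges as $n\to\infty$; since this check is standard but lengthy, I would relegate it to Appendix~\ref{sec:appendix} and simply indicate at each step why real-rootedness plays no role.
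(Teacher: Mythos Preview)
Your proposal is correct and matches the paper's own approach essentially point for point: the appendix there likewise invokes the finite free cumulants of \cite{arizmendi2018cumulants}, their additivity under $\boxplus_n$, and the Kreweras-complement formula for $\boxtimes_n$ from \cite{arizmendi2021finite}, observing that these are purely combinatorial identities valid for any complex moment sequences, and then passes to the $R$- and $S$-transforms via the standard moment--cumulant dictionary in \cite{MR2266879}. The only cosmetic difference is that the paper presents the multiplicative step directly through the cumulant formula $\kappa_j^{(n)}(p\boxtimes_n q)=\sum_{\pi\in\mathcal{NC}(j)}\kappa_\pi^{(n)}(p)\,\kappa_{Kr(\pi)}^{(n)}(q)+O(1/n)$ rather than via a finite $S$-transform, but this is the same mechanism you describe.
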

\begin{remark}
    For the notion of the $S$- and $R$-transforms associated to a sequence, see Remark \ref{rem:momentsequences}.
\end{remark}

This theorem can be established following the arguments leading to Proposition \ref{prop:finiteAsymptotics}. As noted, the proofs of the results in \cite{arizmendi2018cumulants, arizmendi2021finite} rely on the fact that the combinatorial structure behind the finite convolutions of polynomials tends (as $n\to \infty$) to the combinatorial formulas that relate the coefficients of the series in the $M$-, $R$-, and $S$-transforms of the limit sequence. This fact is established regardless of whether this sequence is in fact a moment sequence of a probability measure. When this is the case, the notions of the $R$- and other transforms for a sequence and for the underlying measure match, see,  for instance, \cite[Lectures 16 and 18]{MR2266879}.

The reader interested in further details can check Appendix~\ref{sec:appendix}.

\subsection{{S}-transform for a hypergeometric polynomial} \label{sec:Stransform}

In this section, we show that although the Cauchy transform of any limit of zero-counting measures of hypergeometric polynomials are, generally speaking, algebraic functions whose explicit expressions are not available, their $S$-transforms are straightforward rational functions.

Let us first discuss the case of $_2F_1$ polynomials.
\begin{proposition}
    \label{prop:asymptotics2F1}
  Let  
$$
p_n(x) = \HGF{2}{1}{-n  , \alpha_{n} }{\beta_n}{x},
$$
be a sequence of hypergeometric polynomials such that the finite limits
$$
\lim_n \frac{\alpha_n}{n}=A, \quad \lim_n \frac{\beta_n}{n}=B
$$
exist. If additionally
\begin{equation}
    \label{mainassumptionsCase2F1}
    A  \notin [-1,0),   \quad B  \neq -1, \quad \text{and} \quad A\neq B,
\end{equation}
then any weak-* limit $\mu$ of the normalized zero-counting measures $\chi(p_n)$ is a positive probability measure compactly supported on $\C$, for which in a neighborhood of the origin,
\begin{equation}
    \label{Sfor2F1}
    \mathcal S_\mu(z)= \frac{ z+A+1}{ z+B+1  }.
\end{equation}
\end{proposition}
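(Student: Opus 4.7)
The plan is to apply the multiplicative factorization of Theorem~\ref{thm:multiplicativeConvHG} together with the asymptotic convolution result Theorem~\ref{prop:finiteAsymptotics.updated}(ii) to reduce the proof to computing the limit $S$-transforms of two simpler building blocks.

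\textbf{Factorization.} Choosing $\bm a_1=(\alpha_n),\bm b_1=\emptyset$ and $\bm a_2=\emptyset,\bm b_2=(\beta_n)$ in Theorem~\ref{thm:multiplicativeConvHG} gives
$$
p_n(x)\simeq q_n(x) \boxtimes_n r_n(x), \qquad q_n(x)\isdef \HGF{2}{0}{-n,\,\alpha_n}{\cdot}{x},\quad r_n(x)\isdef \HGF{1}{1}{-n}{\beta_n}{x}.
$$
Here $r_n$ is (up to a scalar) the Laguerre polynomial $L_n^{(\beta_n-1)}$, whose zeros grow like $n$, while $q_n$ is (up to a scalar) a generalized reverse Bessel polynomial, whose zeros decay like $1/n$; neither factor has uniformly bounded zeros on its own. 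Using \eqref{identityMult3a}--\eqref{identityMult3b} with reciprocal dilations, the factorization rewrites as
$$
p_n\simeq \tilde q_n \boxtimes_n \tilde r_n, \qquad \tilde q_n\isdef \dil{n}(q_n),\quad \tilde r_n\isdef \dil{1/n}(r_n),
$$
and the assumptions \eqref{mainassumptionsCase2F1} are precisely what guarantee that both $\tilde q_n$ and $\tilde r_n$ have exact degree $n$ and zeros lying in a common compact subset of $\cc$ for all large $n$ (the zeros of $\tilde q_n$ being in general complex).

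\textbf{Limit $S$-transforms of the building blocks.} Under $\beta_n/n\to B\neq -1$, the classical asymptotics of Laguerre zeros yields a Marchenko--Pastur-type limit for $\chi(\tilde r_n)$ with
$$
\mathcal S_{\mu_{\tilde r}}(z)=\frac{1}{z+B+1}.
$$
For $\tilde q_n$ we compute the limit moments directly from its coefficients in the form \eqref{defFMULTCONV} (which involve only ratios of Pochhammer symbols in $\alpha_n$); passing to the limit $n\to \infty$ and formally inverting the resulting $M$-transform as in Theorem~\ref{prop:finiteAsymptotics.updated} gives
$$
\mathcal S_{\mu_{\tilde q}}(z)=z+A+1.
$$
A quick sanity check via $\mathcal S(0)=1/m_1$: the $x^{n-1}$-coefficient of each polynomial yields $m_1(\tilde r_n)\to B+1$ and $m_1(\tilde q_n)\to 1/(A+1)$, consistent with both formulas and with $m_1(p_n)\to (B+1)/(A+1)$.

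\textbf{Conclusion and main obstacle.} Applying Theorem~\ref{prop:finiteAsymptotics.updated}(ii) to $\tilde q_n\boxtimes_n \tilde r_n$ then gives
$$
\mathcal S_\mu(z)=\mathcal S_{\mu_{\tilde q}}(z)\,\mathcal S_{\mu_{\tilde r}}(z)=\frac{z+A+1}{z+B+1},
$$
which is \eqref{Sfor2F1}. The key technical point is the uniform boundedness in $\cc$ of the (in general complex) zeros of $\tilde q_n$: this is exactly what forces us to use the non-real-rooted extension Theorem~\ref{prop:finiteAsymptotics.updated} rather than the original real-rooted Proposition~\ref{prop:finiteAsymptotics}. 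The excluded interval $A\in[-1,0)$ is precisely the regime where Pochhammer products in the coefficients of $q_n$ change sign and its zeros can escape to infinity, and $A\neq B$ rules out the degenerate case in which the limit $S$-transform collapses to the constant $1$ (corresponding to $\mu=\delta_1$).
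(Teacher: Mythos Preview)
Your approach is correct but takes a different route from the paper. The paper proves Proposition~\ref{prop:asymptotics2F1} \emph{directly}: it identifies $p_n$ with a Jacobi polynomial via \eqref{HPJacobiPfla1}, invokes the Jacobi transformation formulas to argue that $A\notin[-1,0)$ forces uniform boundedness of the zeros and $B\neq -1$ prevents collapse to $\delta_0$, and then passes the hypergeometric ODE $z(1-z)p_n''+(\beta_n-(\alpha_n-n+1)z)p_n'+n\alpha_n p_n=0$ to the limit (via $h_n=p_n'/p_n$) to obtain the quadratic equation \eqref{eq:algebraic2F1forG} for $G=\mathcal G_\mu$; the substitution \eqref{StransformAlt} then yields \eqref{Sfor2F1} by one line of algebra.

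Your factorization-plus-convolution strategy is precisely what the paper uses later for the \emph{general} Theorem~\ref{thm:StranfHyper2F1}, and the remark following that theorem explicitly says the $_2F_1$ case could have been done this way. The trade-off: your route replaces one ODE computation by two (you still need to \emph{establish} $\mathcal S_{\mu_{\tilde r}}(z)=1/(z+B+1)$ and $\mathcal S_{\mu_{\tilde q}}(z)=z+A+1$, which the paper does in the proof of Theorem~\ref{thm:StranfHyper2F1} via Kummer's equation and the reversal identity, not by ``computing moments and formally inverting''), and your claim that the zeros of $\tilde q_n$ stay uniformly bounded is asserted rather than argued. The paper's direct ODE route handles compactness for $p_n$ itself in one stroke via Jacobi theory; your route needs it separately for each factor and then, in principle, for their $\boxtimes_n$-product. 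None of this is wrong, but the building-block $S$-transforms and the boundedness claims are not free: they are exactly the content you are deferring.
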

\begin{proof}
    Denote by $\mu_n =\chi(p_n)$ the probability zero-counting measure of $p_n$.

We use well-known identities expressing $_2 F_1$ polynomials in standard normalization in terms of Jacobi polynomials,
\begin{align} \label{HPJacobiPfla1}
\raising{\beta}{n}\HGF{ 2}{1}{-n, \alpha}{\beta }{x} 
& =  n!  \, P^{(\beta-1,-n+\alpha-\beta)}_{n}(1-2x) =(-1)^n  n! \, P^{(-n+\alpha-\beta,\beta-1)}_{n}(2x-1) .
\end{align}
Jacobi polynomials also exhibit several transformation formulas in the cases when they can have multiple zeros (at $\pm 1$) or have a degree reduction, such as 
$$
P_n^{(-k-1, \beta)}(z)=\frac{\Gamma(n+\beta+1)}{\Gamma(n+\beta-k)} \frac{(n-k-1) !}{n !}\left(\frac{z-1}{2}\right)^{k+1} P_{n-k-1}^{(k+1, \beta)}(z), \qquad k\in \Z_n,
$$
and several more. For a more detailed discussion, see \cite[\S 4.22]{szego1975orthogonal} or \cite{MR2124460}. From these identities it follows that the assumption $A\notin [-1,0)$ is sufficient to guarantee that the zeros of $p_n$ are uniformly bounded, while with $B\neq -1$, its zero-counting measures $\mu_n$ do not collapse to $\delta_0$. 

Thus, standard arguments on weak compactness of the sequence $ \mu_n$ show that under these assumptions, there exist $\Lambda \subset \mathbb{N}$ and a unit measure $\mu\neq \delta_0$ such that
$$
\mu_n \xrightarrow{*} \mu  , \quad n \in \Lambda .
$$
As a consequence, $ \mathcal G_{\chi(p_n)} \to  \mathcal G_{\mu}$, $n \in \Lambda$, in a neighborhood of infinity.

Polynomials $p_n$ satisfy the ODE
$$
z(1-z) p_n''(z) + \left( \beta_n - (\alpha_n-n+1)z\right)p_n'(z)+n\alpha_n p_n(z)=0.
$$
Rewriting it in terms of $h_n=p_n'/p_n$ and noticing that $h_n/n \to G = \mathcal G_{\mu}$, we get an equation for $G$,
\begin{equation}
    \label{eq:algebraic2F1forG}
    z(1-z) G^2(z) + \left(  B - (A- 1)z\right)G(z)+ A=0.
\end{equation}
Observe again that for $B\neq -1$, $G(z)\ne 1/z$. 

With the notation $w=\mathcal S_{\mu}(z)$, the second identity in \eqref{eq.moment.Rtransform} takes the form
\begin{equation}
    \label{StransformAlt}
    G  \left( \frac{z+1}{z w}\right) = z w.
\end{equation}
Thus, making the change of variable $z\mapsto (z+1)/(w z)$ in \eqref{eq:algebraic2F1forG} and using the identity above, we arrive at 
$$
z (w (z+B+1)-(z+A+1))=0,
$$
or
$$
w = \frac{ z+A+1}{ z+B+1  },
$$
which proves \eqref{Sfor2F1}.
\end{proof}

\begin{remark}
    \label{rem:pathologicalcases}
It is interesting to observe that expression \eqref{Sfor2F1} is meaningful even if we drop the assumptions \eqref{mainassumptionsCase2F1}. It is worth discussing then the possible consequences of relaxing these restrictions.

As we have seen in the proof, if $B=-1$, then $\mathcal G_\mu(z)=1/z$, which happens, for instance, if $\mu=\delta_0$. If the zeros of $p_n$'s are real, it means that they asymptotically collapse to the origin.

The consequence of dropping the assumption $A\neq B$ is also clear: if $A=B$, we have $\mathcal G_\mu(z)=1/(z-1)$, which happens, for example, if $\mu=\delta_1$.

If $-1<A<0$, the explicit formula \eqref{consequenceconnectionRandL2} shows that selecting the sequence $\alpha_n$ appropriately, we can make the degree of $p_n$ be of $\mathcal O( -An)$. This would mean that non-trivial part of the measures $\mu_n$ ``escapes'' to infinity. Any possible weak-* limit could have a bounded support, but it will be of a total mass $|A|$. Notice that it would mean that for this $\mu$, $m_0=-A<1$, which is compatible with the algebraic equation \eqref{eq:algebraic2F1forG}.
  
Finally, if $A=-1$, the $S$-transform in \eqref{Sfor2F1} is still rational, but now it vanishes at $z=0$. In this case, one of the branch points of the Cauchy transform $\mathcal G_\mu$, solution of \eqref{eq:algebraic2F1forG}, is at infinity.  This shows that if the limit measure $\mu$ exists, it is a probability measure with unbounded support. 
\end{remark}

Now we formulate the general result of this section:
\begin{theorem}
    \label{thm:StranfHyper2F1}
    For $s,t \in \Z_{\ge 0}$, let $\bm a_n \in \R^s$, $\bm b_n\in \R^t$ be such that finite limits
\begin{equation}
    \label{assumptionlimits}
    \lim_n \frac{\bm a_n}{n}=\bm A=(A_1,\dots A_s)\in \R^s, \quad \lim_n \frac{\bm b_n}{n}=\bm B=(B_1,\dots, B_t)\in \R^t
\end{equation}
exist. 
Assume additionally that
\begin{equation}
    \label{mainassumptionsAB}
    A_j \notin [-1,0),  \quad B_k \neq -1, \quad A_j \neq B_k, \qquad j\in \{1, \dots, s\}, \quad k\in \{ 1, \dots, t\}.
\end{equation}
Then any weak-* limit $\mu$ of the normalized zero-counting measures $\chi(p_n)$ of the sequence
\begin{equation}
    \label{sequencePn}
      p_n(x)=\HGF{s+1}{t}{-n, \bm a_n}{\bm b_n }{ n^{t-s} x}
\end{equation}
is a positive probability measure compactly supported on $\C$, for which in a neighborhood of the origin,
\begin{equation}
    \label{SfunctionForHyper}
   \mathcal S_\mu(z)= \frac{ \prod_{i=1}^s  (z+A_i+1)}{ \prod_{j=1}^t (z+B_j+1)  }.
\end{equation}
\end{theorem}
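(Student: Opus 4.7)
The plan is to exploit the finite free multiplicative decomposition of $p_n$ via Theorem~\ref{thm:multiplicativeConvHG}, combined with Proposition~\ref{prop:asymptotics2F1} (and its natural analogs for ${}_1F_1$ and ${}_2F_0$), and then apply Theorem~\ref{prop:finiteAsymptotics.updated}(ii). First, I would pair $\min(s,t)$ of the numerator parameters $a_{n,i}$ with denominator parameters $b_{n,\sigma(i)}$ for some matching $\sigma$, and, using Theorem~\ref{thm:multiplicativeConvHG} together with the dilation identities \eqref{identityMult3a} and \eqref{identityMult3b}, write
\[
p_n \simeq q_{n,1} \boxtimes_n q_{n,2} \boxtimes_n \cdots \boxtimes_n q_{n,\max(s,t)},
\]
where the first $\min(s,t)$ factors are $\HGF{2}{1}{-n,\, a_{n,i}}{b_{n,\sigma(i)}}{x}$ and the remaining $|t-s|$ factors are either $\HGF{1}{1}{-n}{b_{n,j}}{nx}$ (when $s<t$) or $\HGF{2}{0}{-n,\, a_{n,i}}{\cdot}{x/n}$ (when $s>t$). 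The overall scaling $n^{t-s}$ in the argument of $p_n$ is distributed as a factor of $n$ (respectively $1/n$) per unmatched block, which is precisely the rescaling that keeps the zeros of each factor uniformly bounded.

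Next, I would establish that every factor converges in moments to a compactly supported probability measure with an explicit $S$-transform. For the ${}_2F_1$ factors this is exactly Proposition~\ref{prop:asymptotics2F1}, which delivers $(z+A_i+1)/(z+B_{\sigma(i)}+1)$. For the ${}_1F_1$ and ${}_2F_0$ factors I would imitate the ODE argument in the proof of that proposition: the second-order hypergeometric differential equation, rewritten in the appropriately rescaled variable, yields an algebraic equation for the limit Cauchy transform $G$, and the substitution $x\mapsto (z+1)/(z\mathcal{S}_\mu(z))$ coming from \eqref{StransformAlt} converts it into a linear equation in $\mathcal{S}_\mu$ whose solution is $1/(z+B_j+1)$ in the ${}_1F_1$ case and $(z+A_i+1)$ in the ${}_2F_0$ case. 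The non-degeneracy hypotheses \eqref{mainassumptionsAB} serve precisely to ensure both the uniform boundedness of the zeros of each block and the non-collapse of the limit to $\delta_0$.

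With uniformly bounded zeros and convergence of moments in hand for each factor, Theorem~\ref{prop:finiteAsymptotics.updated}(ii) applies iteratively: the $S$-transform of the limit zero-counting measure of the $\boxtimes_n$-product equals the product of the individual $S$-transforms. Multiplying over all $\max(s,t)$ blocks gives precisely the rational function in \eqref{SfunctionForHyper}. Since this $S$-transform is the same for every weak-$*$ accumulation point of $\chi(p_n)$, the whole sequence in fact converges to the unique measure $\mu$ so determined.

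The principal obstacle is the second step when $s\neq t$: Proposition~\ref{prop:asymptotics2F1} does not directly cover the ${}_1F_1$ and ${}_2F_0$ building blocks, so one must rerun the ODE-plus-$\mathcal S$-transform computation for each and confirm that their zeros remain uniformly bounded under the rescalings $nx$ and $x/n$ respectively. A minor additional verification is that \eqref{mainassumptionsAB} permits at least one matching $\sigma$ for which Proposition~\ref{prop:asymptotics2F1} applies to every paired block, which follows immediately from $A_i\neq B_k$ across all indices.
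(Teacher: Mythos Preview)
Your proposal is correct and follows essentially the same route as the paper: decompose $p_n$ via Theorem~\ref{thm:multiplicativeConvHG} into ${}_2F_1$ blocks plus, when $s\neq t$, rescaled ${}_1F_1$ or ${}_2F_0$ blocks, compute the $S$-transform of each block (the paper does the ${}_1F_1$ case exactly as you describe via Kummer's ODE, and obtains the ${}_2F_0$ case from the ${}_1F_1$ case through the reversed-measure identity \eqref{measureReversed} rather than a fresh ODE computation, but either works), and then multiply the $S$-transforms using Theorem~\ref{prop:finiteAsymptotics.updated}(ii). One small caution: your final sentence asserting that the whole sequence converges to a \emph{unique} $\mu$ goes beyond the theorem as stated, which only characterizes each weak-$*$ accumulation point; since the limit measures here live a priori on $\C$ rather than $\R$, the $S$-transform alone need not pin down a single measure, and the paper is careful not to claim this.
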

\begin{proof}
Consider the sequence
 $$
q_n(x) = \HGF{1}{1}{-n    }{\beta_n}{x},
$$
such that
$$
 \lim_n \frac{\beta_n}{n}=B.
$$
Polynomials $q_n$ are, in fact, Laguerre polynomials, 
$$
q_n(x) = \frac{n!}{\raising{\beta_n}{n}}\, L_n^{(\beta_n-1)}(x).
$$
We are interested in the sequence $\mu_n =\chi(\widehat q_n)$ of probability zero-counting measures for rescaled polynomials
$$
\widehat q_n(x) = \HGF{1}{1}{-n    }{\beta_n}{n x}.
$$
It is well known that for each fixed $\beta$, the sequence of monic Laguerre polynomials $\left\{\widetilde L_n^{\left(\beta\right)}\right\}, n \in \mathbb{N}$, satisfies the three-term recurrence relation
$$
\widetilde L_{n+1}^{\left(\beta\right)}(x)=\left(x-(2 n+\beta+1) \right) \widetilde L_n^{\left(\beta\right)}(x)+ n (n+\beta) \widetilde L_{n-1}^{\left(\beta\right)}(x).
$$
In particular, the monic polynomials
$$
n^{-n}\widetilde L_n^{(\beta-1)}(nx)
$$
satisfy
$$
n^{-n-1} L_{n+1}^{\left(\beta-1\right)}(nx)=\left(x-\frac{2 n+\beta}{n} \right) n^{-n} L_n^{\left(\beta-1\right)}(nx)+   \frac{n+\beta-1}{n} \left( n^{-n+1} L_{n-1}^{\left(\beta-1\right)}(nx)\right) .
$$
Writing these relations as an eigenvalue problem for a Jacobi matrix and applying  Gershgorin's theorem, it is easy to prove that under the assumption that the finite limit $B$ of $\beta_n/n$ exists, the zeros of  $\widehat q_n$  are uniformly bounded. Moreover, the well-known identity
$$
L_n^{(-k)}(x)=\frac{(n-k) !}{n !}(-x)^{k} L_{n-k}^{(k)}(x), \quad k\in \Z_n,
$$
see e.g.~\cite[Section 18.5]{MR2723248} or \cite[Section 2.3]{martinez2023real}, indicates that $\mu_n$'s do not converge to $\delta_0$, unless $B\neq -1$. This is indeed the case; see the discussion in \cite{MR1858305} or \cite{MR1931612}. 

Polynomials $q_n$ are solution of Kummer's differential 
$$
z q_n''(z)+(\beta_n-z) q_n'(z)+n q_n(z)=0,
$$
see, e.g.~\cite[Chapter 7]{MR2682403}. Hence, $\widehat q_n$ 
satisfy
\begin{equation}
    \label{odeLaguerre}
    z \widehat q_n''(z)+(\beta_n-nz) \widehat q_n'(z)+n^2 \widehat q_n(z)=0.
\end{equation}

As before, by weak compactness there exist $\Lambda \subset \mathbb{N}$ and a unit measure $\mu$ such that
$$
\mu_n \xrightarrow{*} \mu, \quad n \in \Lambda .
$$
As a consequence, $ \mathcal G_{\chi(q_n)} \to  \mathcal G_{\mu}$, $n \in \Lambda$, in a neighborhood of infinity. Rewriting \eqref{odeLaguerre} in terms of $h_n=\widehat q_n'/ \widehat q_n$ and using that $h_n/n \to G = \mathcal G_{\mu}$, we get an equation for $G$,
$$
z  G^2(z) + \left(  B -  z\right)G(z)+ 1=0
$$
(notice again that for $B\neq -1$, $G(x)\ne 1/x$). Now we proceed as in the proof of Proposition~\ref{prop:asymptotics2F1}: making the change of variable $z\mapsto (z+1)/(w z)$ in this quadratic equation on $G$ and using \eqref{StransformAlt}, we get for $w=\mathcal S_{\mu}(z)$ the equation
$$
z \left(w (z+B+1)-1 \right) =0,
$$
and we conclude that 
\begin{equation}
    \label{Sfor1F1}
  \mathcal S_\mu(z)= \frac{ 1}{ z+B+1  }.
\end{equation}

Same arguments, or using identity \eqref{measureReversed} and the fact that  
$$
\HGF{2}{0}{-n , \alpha_n   }{\cdot}{x} = (-x)^n \raising{ \alpha_n }{n}\HGF{1}{1}{-n    }{-n-\alpha_n+1}{-1/x}
$$
(see, e.g., \cite[Lemma 2.1]{martinez2023real}) allows us to derive that for the rescaled polynomials
$$
\widehat q_n(x) = \HGF{2}{0}{-n, \alpha_n    }{\cdot}{x/n}
$$
under assumption 
$$
 \lim_n \frac{\alpha_n}{n}=A\notin  [-1,0),
$$
for any accumulation point $\mu$ of the normalized zero-counting measures $\chi(\widehat q_n)$ it holds that
\begin{equation}
    \label{Sfor2F0}
  \mathcal S_\mu(z)=   z+A+1  .
\end{equation}
  
\medskip 
  
We ``assemble''  the general case \eqref{sequencePn}, using Proposition \ref{prop:asymptotics2F1} and the building blocks above, appealing to the identity \eqref{identityMult2} and Theorem~\ref{thm:multiplicativeConvHG}.  

Indeed, the polynomial in \eqref{sequencePn} can be written as
$$ 
      p_n(x)=\HGF{s+1}{t}{-n, \bm a_n}{\bm b_n }{ n^{t-s} x} \simeq \dil{n^{t-s}} \left[ \HGF{s+1}{t}{-n, \bm a_n}{\bm b_n }{ x} \right]. 
$$

Denote $\bm a_n =\left(a_1^{(n)}, \dots, a_s^{(n)}\right) $, $\bm b_n =\left(b_1^{(n)}, \dots, b_t^{(n)}\right) $. 
If $s=t$, by Theorem~\ref{thm:multiplicativeConvHG},
\begin{equation}
    \label{cases=ta}
    p_n(x)=\HGF{2}{1}{-n, a_1^{(n)}}{b_1^{(n)} }{   x} \boxtimes_n \HGF{2}{1}{-n, a_1^{(n)}}{b_1^{(n)} }{  x} \boxtimes_n\dots \boxtimes_n  \HGF{2}{1}{-n, a_s^{(n)}}{b_s^{(n)} }{  x},
\end{equation}
and in this case, \eqref{SfunctionForHyper} follows from \eqref{Sfor2F1} and Theorem~\ref{prop:finiteAsymptotics.updated}.

Assume that $s<t$. Then
\begin{align*}
     p_n(x) & \simeq q_{n,1}(x) \boxtimes_n q_{n,2}(x) ,
\end{align*}
where $q_{n,1}(x)$ is the polynomial in the right hand side of \eqref{cases=ta}, while
\begin{align*}
q_{n,2}(x) &= \dil{n^{s-t}} \left[ \HGF{1}{t-s}{-n }{b_{s+1}^{(n)} , \dots, b_{t}^{(n)} }{   x} \right]\\
& =  \dil{n^{-1}} \left[\HGF{1}{1}{-n }{b_{s+1}^{(n)}  }{   x} \right] \boxtimes_n \dots \boxtimes_n
 \dil{n^{-1}} \left[ \HGF{1}{1}{-n }{b_{t}^{(n)}  }{   x} \right] \\
& =  \HGF{1}{1}{-n }{b_{s+1}^{(n)}  }{ n  x} \boxtimes_n \dots \boxtimes_n  \HGF{1}{1}{-n }{b_{t}^{(n)}  }{ n  x}.
\end{align*}
It remains to use \eqref{Sfor2F1}, \eqref{Sfor1F1}, and Theorems~\ref{thm:multiplicativeConvHG} and \ref{prop:finiteAsymptotics.updated} to obtain \eqref{SfunctionForHyper}.

The case $s>t$ is analyzed in the same way, using \eqref{Sfor2F0}.
\end{proof}

\begin{remark}
    From the proof above it is clear that \eqref{SfunctionForHyper} follows directly from \eqref{Sfor1F1}--\eqref{Sfor2F0} and Theorem~\ref{thm:multiplicativeConvHG}. However, we found the analysis of the case of $_2F_1$ polynomials illuminating and having an independent interest. 

    We can analyze the ``pathological'' situations of Theorem \ref{thm:StranfHyper2F1} that arise if we relax the conditions \eqref{mainassumptionsAB} using the arguments from Remark \ref{rem:pathologicalcases}.  

    A result, equivalent to a particular case of the statement of Theorem~\ref{thm:StranfHyper2F1}, can be found in \cite{Neuschel}.
\end{remark}
\begin{theorem}
    \label{cor:CauchyTransf}
    Under the assumptions of Theorem~\ref{thm:StranfHyper2F1}, the Cauchy transform $\mathcal G_\mu(u)$ of the limit measure $\mu$ in a neighborhood of infinity is an algebraic function $y/u$, where $y=y(u)$ satisfies the equation
    \begin{equation}
    \label{eqCTBis}
    y \prod_{j=1}^t (y +B_j ) = u (y-1)   \prod_{i=1}^s  (y+A_i ),
\end{equation}
whose Riemann surface is a ramified covering of $\C$ of genus $0$. 
\end{theorem}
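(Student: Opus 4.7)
The plan is to derive the algebraic equation \eqref{eqCTBis} directly from the explicit formula \eqref{SfunctionForHyper} for the $S$-transform via the functional relation between $\mathcal{S}_\mu$ and $\mathcal{G}_\mu$, and then exhibit an explicit rational parametrization of the resulting curve to conclude that its normalization has genus $0$.

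First, I would recall the second identity in \eqref{eq.moment.Rtransform}, namely
$$\mathcal{G}_\mu\!\left(\frac{z+1}{z\,\mathcal{S}_\mu(z)}\right) = z\,\mathcal{S}_\mu(z),$$
valid in a punctured neighborhood of the origin. Setting $u = (z+1)/(z\,\mathcal{S}_\mu(z))$, one immediately obtains the key identity
$$u\,\mathcal{G}_\mu(u) = u\cdot z\,\mathcal{S}_\mu(z) = z+1.$$
Thus, defining $y = u\,\mathcal{G}_\mu(u)$, the substitution $z = y-1$ makes sense and yields $\mathcal{G}_\mu(u) = y/u$, justifying the form announced in the statement.

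Next, I would substitute \eqref{SfunctionForHyper} into $u = (z+1)/(z\,\mathcal{S}_\mu(z))$, which under $z = y-1$ becomes
$$u = \frac{y\,\prod_{j=1}^t (y+B_j)}{(y-1)\,\prod_{i=1}^s (y+A_i)}.$$
Clearing denominators yields exactly \eqref{eqCTBis}. This also provides the required rational parametrization of the affine curve defined by \eqref{eqCTBis}: the parameter $z \in \C$ (equivalently $y = z+1$) gives both $y$ and $u$ as rational functions of a single variable. Since an irreducible plane algebraic curve admitting a rational parametrization by $\mathbb P^1$ has geometric genus $0$, the normalization of the Riemann surface of \eqref{eqCTBis} is $\mathbb P^1$, i.e.\ a ramified covering of $\C$ of genus $0$.

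The only subtlety I anticipate is the analytic-versus-algebraic justification of the substitution $z \mapsto y - 1$: one must check that near the point $u = \infty$ (which corresponds to $z = 0$, where $\mathcal{S}_\mu(0) = \prod(A_i+1)/\prod(B_j+1)\neq 0$ by the assumptions \eqref{mainassumptionsAB}), the map $z \mapsto u$ is a well-defined local biholomorphism, so that the composition $\mathcal{G}_\mu(u) = y/u$ indeed recovers the correct branch of the Cauchy transform characterized by $\mathcal{G}_\mu(u) \sim 1/u$ as $u \to \infty$. This is the branch selected by $y(\infty) = 1$, consistent with $z = 0$ in the parametrization, and fixes uniquely the algebraic function $y(u)$ on the chosen sheet.
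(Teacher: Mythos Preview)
Your proof is correct and essentially the same as the paper's: both invert the explicit $S$-transform \eqref{SfunctionForHyper} via the relation between $\mathcal S_\mu$ and $\mathcal G_\mu$, substitute $z=y-1$ with $y=u\,\mathcal G_\mu(u)$, and read off \eqref{eqCTBis}, then note that $u$ is rational in $y$ to get genus $0$. The only cosmetic difference is that the paper routes the computation through the $M$-transform and a final change $u\mapsto 1/u$, whereas you use the second identity in \eqref{eq.moment.Rtransform} directly, which is slightly cleaner but amounts to the same substitution.
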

\begin{proof}
    By \eqref{defStranform} and \eqref{SfunctionForHyper} we can find $z=\mathcal M_\mu(u)$ by solving
$$
	u =  \frac{z}{z+1} \frac{ \prod_{i=1}^s  (z+A_i+1)}{ \prod_{j=1}^t (z+B_j+1)  }.
	$$
Recalling the definition \eqref{defMtransform} of the $M$-transform in terms of the Cauchy transform  $\mathcal G_\mu$ of $\mu$, we have that
$$
z=\frac{1}{u}  \mathcal G_\mu\left(\frac{1}{u}\right)-1,
$$
and we can rewrite the equation above as
$$
u =  \frac{u^{-1}  \mathcal G_\mu\left(u^{-1}\right)-1}{u^{-1}  \mathcal G_\mu\left(u^{-1}\right)} \frac{ \prod_{i=1}^s  (u^{-1}\mathcal G_\mu\left(u^{-1}\right)+A_i )}{ \prod_{j=1}^t (u^{-1}\mathcal G_\mu\left(u^{-1}\right)+B_j )  }.
$$
Replacing $u\mapsto 1/u$ and denoting $y= u \mathcal G_\mu(u)$ we arrive at equation \eqref{eqCTBis}. Since it defines $u$ as a rational function of $y$, the Riemann surface of $y=y(u)$ (and hence, of $w=w(u)$) has genus $0$.
\end{proof}

For future reference, it will be convenient to formulate the following immediate consequence:
\begin{corollary}
    \label{cor:shifted}
  Let $c, d\in \R$, $c\neq 0$. Then, under the assumptions of Theorem~\ref{thm:StranfHyper2F1}, any weak-* limit $\mu$ of the normalized zero-counting measures $\chi(p_n)$ of the sequence 
$$
    p_n(x)=\HGF{s+1}{t}{-n, \bm a_n}{\bm b_n }{ n^{t-s} (c x+d)}
$$
is a positive probability measure $\mu$ compactly supported on $\C$, whose $S$-transform $w=\mathcal S_\mu(z)$ satisfies the equation
\begin{equation}
    \label{SforpFqshifted}
w    \prod_{j=1}^t \left[  d  u w + c(u+1+B_j)    \right] 
 =  c^{t-s} \left[  d   w +c  \right]   \prod_{i=1}^s  \left( d  u w + c(u+1 + A_i) \right).
\end{equation}
\end{corollary}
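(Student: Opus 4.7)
The plan is to reduce to the unshifted case handled by Theorem~\ref{thm:StranfHyper2F1} via a pushforward of measures, and then translate the algebraic equation of Theorem~\ref{cor:CauchyTransf} into an equation for $\mathcal S_\mu$.

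First, write $\tilde p_n(y) \isdef \HGF{s+1}{t}{-n, \bm a_n}{\bm b_n}{n^{t-s}y}$ for the unshifted sibling of $p_n$. The identity $p_n(x)=\tilde p_n(cx+d)$ shows that the zeros of $p_n$ are exactly $(\eta-d)/c$, as $\eta$ ranges over the zeros of $\tilde p_n$, so $\chi(p_n)$ is the pushforward of $\chi(\tilde p_n)$ under the continuous affine map $\eta\mapsto(\eta-d)/c$. By weak-$*$ compactness, any accumulation point $\mu$ of $\chi(p_n)$ is therefore the pushforward under this map of some accumulation point $\nu$ of $\chi(\tilde p_n)$; Theorem~\ref{thm:StranfHyper2F1} guarantees that $\nu$ is a probability measure compactly supported on $\C$, with $\mathcal S_\nu$ given by \eqref{SfunctionForHyper}, so $\mu$ inherits the same properties.

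Next, the pushforward yields the Cauchy-transform identity $\mathcal G_\mu(\xi) = c\,\mathcal G_\nu(c\xi+d)$. By Theorem~\ref{cor:CauchyTransf}, the function $y(v)\isdef v\,\mathcal G_\nu(v)$ satisfies the algebraic equation \eqref{eqCTBis}. To extract $\mathcal S_\mu$, I parametrize by $\xi = (u+1)/(uw)$ with $w=\mathcal S_\mu(u)$, so that the defining relation \eqref{StransformAlt} gives $\mathcal G_\mu(\xi)=uw$, hence $\mathcal G_\nu(c\xi+d)=uw/c$. Setting $v = c\xi + d = [c(u+1)+duw]/(uw)$, a short computation yields
\begin{equation*}
y = v\,\mathcal G_\nu(v) = \frac{c(u+1)+duw}{c},
\end{equation*}
so that $y-1 = u(c+dw)/c$, $y+B_j=[c(u+1+B_j)+duw]/c$, and $y+A_i=[c(u+1+A_i)+duw]/c$.

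Finally, I would substitute these into \eqref{eqCTBis}, obtaining on the left-hand side $[c(u+1)+duw]\prod_j[c(u+1+B_j)+duw]/c^{t+1}$, and on the right-hand side $[c(u+1)+duw](c+dw)\prod_i[c(u+1+A_i)+duw]/(wc^{s+1})$. Canceling the common factor $c(u+1)+duw$, multiplying both sides by $c^{t+1}w$, and collecting the resulting power $c^{t-s}$ on the right produces exactly \eqref{SforpFqshifted}. The computation is entirely elementary; the only bit of care is bookkeeping the powers of $c$ pulled out of the products, so I do not anticipate any conceptual obstacle.
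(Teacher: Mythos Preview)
Your proof is correct and follows essentially the same approach as the paper: relate $\mathcal G_\mu$ to the Cauchy transform of the unshifted limit via the affine pushforward, then substitute into the algebraic equation \eqref{eqCTBis} using the parametrization \eqref{StransformAlt}. The only cosmetic difference is that the paper first writes down the intermediate algebraic equation for $G=\mathcal G_\mu(u)$ and then performs the change $u\mapsto (u+1)/(uw)$, whereas you collapse both substitutions into one step by working directly with $v=c\xi+d$ and $y=v\,\mathcal G_\nu(v)$; the arithmetic is identical.
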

\begin{proof}
Let
$$
   p_n(x)=\HGF{s+1}{t}{-n, \bm a_n}{\bm b_n }{ n^{t-s} x}
$$
and let $\widetilde \mu$ be a weak-* limit of the normalized zero-counting measures $\chi(q_n)$. It is easy to see that the Cauchy transforms of $\widetilde \mu$ and $\mu$ are related by  
$$
\mathcal G_{  \mu}(z) = c\, \mathcal G_{ \widetilde \mu}(cz+d).
$$
Hence, using \eqref{eqCTBis}, we find that $G=\mathcal G_{  \mu}$ satisfies the equation
$$  
 G \prod_{j=1}^t \left(\left(cu+d \right)  G + c B_j \right) = c^{t-s}  \left(\left(c u+d \right)  G-c\right)   \prod_{i=1}^s  \left(\left(cu+d \right)  G+c A_i \right).
$$
 The change of variable $u\mapsto (u+1)/(  w u)$, along with the identity \eqref{StransformAlt}, yield
\eqref{SforpFqshifted} with $ w=\mathcal S_{  \mu}(z)$.
\end{proof}

%%%%%%%%%%%%%%%%%%%%%%%%%%%%%%%%%%%%%%%%%%%%%%%%%%%%%%
\section{Type I Multiple orthogonal polynomials} \label{sec:MOPtype1}
%%%%%%%%%%%%%%%%%%%%%%%%%%%%%%%%%%%%%%%%%%%%%%%%%%%%%%

Recall that Type I multiple orthogonal polynomials are defined by the orthogonality and normalization conditions \eqref{def:typeIfunction}--\eqref{typeInorm}. The potential-theoretic techniques allow to describe the  asymptotics of the zeros of function $Q_{\bm n}$ in \eqref{def:typeIfunction}. The recently obtained expressions for the actual type I polynomials allow us to perform this analysis for polynomial coefficients of $Q_{\bm n}$.

%%%%%%%%%%%%%%%%
\subsection{Type I Jacobi-Piñeiro polynomials}\ \label{subsec:JPTypeI}
%%%%%%%%%%%%%%%%

The AT-system of Jacobi-Piñeiro weights was introduced in \eqref{JPweights}: for $\beta>-1$, $\bm{n}=(n_1,\dots,n_r)\in \N^r$, $\bm \alpha=(\alpha_1,\dots,\alpha_r)$ such that, without loss of generality,
\begin{equation}
    \label{assumptionsonalphas}
     \alpha_1>\alpha_2>\cdots>\alpha_r>-1  \quad \text{and} \quad   \alpha_i-\alpha_j\not\in\Z \text{ for } i\not=j,
\end{equation}
the Type I Jacobi-Piñeiro polynomials of Type I, $P^{(\bm \alpha;\beta)}_{\bm{n},i}$, $\deg P^{(\bm \alpha;\beta)}_{\bm{n},i}  \leq n_j-1$, $i=1, \dots, r$, are given by the orthogonality conditions on the function 
\begin{equation}
\label{def:typeIfunctionJPI}
     Q_{\bm{n}}^{(\bm \alpha;\beta)}(x) \isdef \sum_{j=1}^rP^{(\bm \alpha;\beta)}_{\bm{n},i}(x)w_j(x),\quad w_j(x)\isdef  x^{\alpha_j}(1-x)^\beta,
\end{equation}
namely, 
\begin{equation}  \label{JPtypeI}
 \int_0^1        Q_{\bm{n}}^{(\bm \alpha;\beta)}(x) x^{ k}  \, dx = 0, \qquad 0 \leq k \leq |\bm{n}|-2,
\end{equation}
together with the normalization
$$
 \int_0^1   Q_{\bm{n}}^{(\bm \alpha;\beta)}(x) x^{ |\bm{n}|-1}  \, dx = 1.
$$

Recently, it was shown that these are hypergeometric polynomials. In order to write the explicit expression, we borrow notation from \cite{JPOP23}: for $\bm{a}\in \mathbb{R}^r$, let $\bm{a}^{(i)}\in \mathbb{R}^{r-1}$ stand for the vector obtained from $\bm a$ by deleting its $i$-th entry; also, in a slight abuse of notation, we also understand that adding a scalar to a multi-index means adding this scalar to each one of its components. Then, see \cite{JPOP23},
\begin{equation}\label{JPTypeI.General}
 P^{(\bm \alpha;\beta)}_{\bm{n},i}(x)= c^{(\bm \alpha;\beta)}_{\bm{n},i} 
 \HGF{r+1}{r}{-n_i+1,\alpha_i+\beta+|\bm{n}|,\alpha_i+1-\bm \alpha^{(i)}-\bm{n}^{(i)}}{\alpha_i+1,\alpha_i+1-\bm \alpha^{(i)}}{x},
\end{equation}
where
$$
c^{(\bm \alpha;\beta)}_{\bm{n},i} \isdef (-1)^{\left| \bm{n}\right|-1}\frac{\Gamma(\alpha_i+\beta+|\bm{n}|) \prod_{k=1}^r 
 \raising{\alpha_k+\beta+|\bm{n}|}{n_k} }{(n_i-1)!\,\Gamma(\beta+|\bm{n}|)\Gamma(\alpha_i+1) \prod_{\substack{1\leq k \leq r\\ k\neq i}}\raising{\alpha_{k}-\alpha_i}{n_k}}
$$
is just a normalizing constant that does not affect the zeros of the polynomial. For $r=2$, this formula was obtained first in \cite{branquinho2023hahn}.

Using this expression and Theorem \ref{thm:multiplicativeConvHG}, we get the following representation for polynomials \eqref{JPtypeI} for $i=1,\dots,r$:
\begin{equation}\label{JPTypeI.General.Decomposition}
 P^{(\bm \alpha;\beta)}_{\bm{n},i}=c^{(\bm \alpha;\beta)}_{\bm{n},i} p_{i,1}\boxtimes_{n_i-1} p_{i,2}\boxtimes_{n_i-1}\cdots\boxtimes_{n_i-1} p_{i,r},
\end{equation}
where 
\begin{equation}\label{p.JP.General}
  p_{i,j}(x)=\begin{cases}
   \displaystyle    \HGF{2}{1}{-n_i+1,\alpha_i-\alpha_j-n_{j}+1}{\alpha_i-\alpha_{j}+1}{x}, &  j\in \{1,\cdots,r\}\setminus \{i\},  \\[4mm]
   \displaystyle  \HGF{2}{1}{-n_i+1,\alpha_i+\beta+|\bm{n}|}{\alpha_i+1}{x} , & j=i.
       \end{cases}
\end{equation}

\subsubsection{Real zeros: monotonicity and interlacing}\ \label{sec:zerosJPTypeI}

Since the Jacobi-Piñeiro weights with $\bm \alpha$ satisfying \eqref{assumptionsonalphas} form an AT-system, it is known that the corresponding function $Q_{\bm{n}}^{(\bm \alpha;\beta)}$ in \eqref{def:typeIfunctionJPI} has $|\bm n|-1$ zeros on $(0,1)$ for every multi-index $\bm n$, see, e.g.~\cite[Theorem 2.3]{MR2247778}. Although the potential-theoretic description of the asymptotics of $Q_{\bm{n}}^{(\bm \alpha;\beta)}$ is relatively well studied, the behavior of the zeros of the Type I Jacobi-Piñeiro polynomials $P^{(\bm \alpha;\beta)}_{\bm{n},i}$, $i=1, \dots, r$, is much less known.  

Let us denote the open intervals $\Delta_1, \dots, \Delta_r$, with
\begin{equation}
    \label{deltas}
\Delta_1=(0,1), \quad \Delta_{k}=\begin{cases}
   \R_{<0}, & \text{if $2\le k\le r$ is even,} \\
    \R_{>0}, & \text{if $2\le k\le r$ is odd.}
\end{cases}  
\end{equation}
A simple calculation shows that for $-1<\alpha <0$,
\begin{equation}
    \label{integral1}
    \int_{\Delta_k} \frac{|t|^\alpha}{t-x}dt=(-1)^k \frac{\pi}{\sin(\pi \alpha)}\, |x|^\alpha, \quad x\in \Delta_{k-1}, \quad k\ge 2.
\end{equation}
Let
$$
 s_{k}(x)\isdef 
   |x|^{\alpha_k-\alpha_{k-1}}, \quad x\in \Delta_{k-1}, \quad 2\le k \le r.
$$
If together with \eqref{assumptionsonalphas} we assume that
\begin{equation}
    \label{assumptionNikishin}
  \alpha_1  -1<\alpha_{r} =\min_{1\le k \le r} \alpha_k,
\end{equation}
then by \eqref{integral1}, each $s_k$ is equal, up to a constant factor, to its Cauchy transform on $\Delta_k$:
$$
s_k(x) = \text{const}_k \int_{\Delta_k} \frac{|t|^{\alpha_k-\alpha_{k-1}}}{t-x}\, dt, \quad x\in \Delta_{k-1}, \quad 2\le k \le r,
$$
which means that the weights $w_j$ in \eqref{def:typeIfunctionJPI} form a Nikishin system. By \cite{MR3311759}, under the additional constraint that if the sequence of multi-indices $\bm n=(n_1,\dots, n_r)$ is such that
$$
\left( \max_{1\le k \le r} n_k\right) - \left( \min_{1\le k \le r} n_k\right)
$$
is uniformly bounded, then the number of zeros of each $P^{(\bm \alpha;\beta)}_{\bm{n},i}$, $i=1,\dots, r$, outside $\overline \Delta_r$ is also uniformly bounded, and the zeros accumulate on $\overline \Delta_r\cup \{\infty\}$ as $|\bm n|\to \infty$. 

If we drop the restriction \eqref{assumptionNikishin}, the weights $w_i$ still form a generalized Nikishin system, with
$$
w_{j+1} (x)= x^{m_j} s_j(x)w_j(x), \quad j=1,\dots, r-1, 
$$
for some $m_j\in \Z$. Such a situation is considered in \cite{MR3354972}; the results therein are not immediately applicable due to the assumptions the authors impose on the polynomial factors. However, we conjecture that the zeros of the Jacobi-Piñeiro polynomials $P^{(\bm \alpha;\beta)}_{\bm{n},i}$ accumulate on $\overline \Delta_r\cup \{\infty\}$ as $|\bm n|\to \infty$ under very mild assumptions on the sequence $\bm n$.

Using the approach that we have already described in \cite{martinez2023real}, we can give some additional sufficient conditions for real-rootedness of Jacobi-Piñeiro polynomials, as well as deduce the interlacing and monotonicity of zeros of polynomials with respect to the parameters.

\begin{theorem}\label{Properties.JP}
Let $\beta>-1$ and $\bm \alpha=(\alpha_1,\dots,\alpha_r)$ satisfy \eqref{assumptionsonalphas}. Assume that for a multi-index $\bm n\in\N^{r}$ and for a specific index $i\in \{1, 2,\dots, r\}$,  
\begin{equation}\label{condition1.real.JP}
    \alpha_1-1<\alpha_i<\min_{1\le j\le r }\{\alpha_j+n_j\}-n_i+1.
\end{equation}
Then $P^{(\bm \alpha;\beta)}_{\bm n,i}\in \P_{n_i-1}(\Delta_r)$.

Additionally, for $0< t\leq 2$,
\begin{enumerate}[(i)]
    \item $P^{(\bm \alpha+t;\beta)}_{\bm n,i}\preccurlyeq  P^{(\bm\alpha;\beta)}_{\bm n,i}$ and $ P^{(\bm \alpha;\beta)}_{\bm n,i}\preccurlyeq P^{(\bm\alpha;\beta+t)}_{\bm n,i}$, if $r$ is even, 
    \item $P^{(\bm \alpha;\beta)}_{\bm n,i}\preccurlyeq P^{(\bm\alpha+t;\beta)}_{\bm n,i}$ and $ P^{(\bm \alpha;\beta+t)}_{\bm n,i}\preccurlyeq P^{(\bm\alpha;\beta)}_{\bm n,i}$, if $r$ is odd.
\end{enumerate}
\end{theorem}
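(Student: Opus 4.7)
The plan is to exploit the decomposition \eqref{JPTypeI.General.Decomposition} and reduce everything to properties of the $r$ hypergeometric building blocks $p_{i,j}$ in \eqref{p.JP.General}, each of which (possibly after a Pfaff transformation) is a classical Jacobi polynomial in a suitable variable.

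First, by \eqref{HPJacobiPfla1}, the diagonal factor satisfies $p_{i,i}(x)\simeq P^{(\alpha_i,\,\beta+|\bm n|-n_i)}_{n_i-1}(1-2x)$, and since $\alpha_i>-1$ and $\beta+|\bm n|-n_i>-1$, it has $n_i-1$ zeros in $(0,1)\subset\R_{>0}$. For each off-diagonal factor with $j\neq i$, I would first apply Pfaff's transformation to write
$$p_{i,j}(x)=(1-x)^{n_i-1}\,\HGF{2}{1}{-n_i+1,\ n_j}{\alpha_i-\alpha_j+1}{\frac{x}{x-1}},$$
and then invoke \eqref{HPJacobiPfla1} to identify the inner $_2F_1$ as proportional to $P^{(\alpha_i-\alpha_j,\ n_j-n_i+\alpha_j-\alpha_i)}_{n_i-1}(1-2y)$. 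Both Jacobi parameters are strictly greater than $-1$ exactly under hypothesis \eqref{condition1.real.JP}: the first from $\alpha_j\le\alpha_1<\alpha_i+1$, the second from $\alpha_i<\alpha_j+n_j-n_i+1$. Hence this polynomial has all its zeros in $y\in(0,1)$, and the change of variable $x=y/(y-1)$ places all zeros of $p_{i,j}$ in $\R_{<0}$. The first claim then follows by iterating the strict version of Proposition~\ref{prop:realrootedness}: multiplying $p_{i,i}\in\P_{n_i-1}(\R_{>0})$ by $r-1$ factors in $\P_{n_i-1}(\R_{<0})$ yields $P^{(\bm\alpha;\beta)}_{\bm n,i}\in\P_{n_i-1}(\R_{>0})$ when $r$ is odd and $P^{(\bm\alpha;\beta)}_{\bm n,i}\in\P_{n_i-1}(\R_{<0})$ when $r$ is even, matching $\Delta_r$.

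For the interlacing claims (i) and (ii), the key observation is that each $p_{i,j}$ with $j\ne i$ depends on $\bm\alpha$ only through the differences $\alpha_i-\alpha_j$ and is independent of $\beta$; hence under either shift $\bm\alpha\mapsto\bm\alpha+t$ or $\beta\mapsto\beta+t$, only $p_{i,i}$ changes, into a Jacobi polynomial with a single shifted parameter. Classical interlacing of Jacobi polynomials under a real shift $t\in(0,2]$ in either parameter, transferred to the $x$-variable via $z=1-2x$, gives
$$p_{i,i}\preccurlyeq p_{i,i}^{(\bm\alpha+t)}\qquad\text{and}\qquad p_{i,i}^{(\beta+t)}\preccurlyeq p_{i,i}^{(\beta)}.$$
Denoting by $q$ the finite free product (over $\boxtimes_{n_i-1}$) of the remaining $r-1$ factors, the same sign count as above shows that $q\in\P_{n_i-1}(\R_{\ge 0})$ when $r$ is odd and $q\in\P_{n_i-1}(\R_{\le 0})$ when $r$ is even. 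I would then apply Proposition~\ref{lem:preservinginterlacingMult} to preserve the direction of interlacing in the first case, and Remark~\ref{preserveinterlacing.negativezeros} to reverse it in the second. The four resulting interlacings coincide precisely with (i) and (ii).

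The main technical obstacle is the availability of Jacobi interlacing for a continuous shift $t\in(0,2]$ rather than only integer shifts by $1$ or $2$; I would rely on the classical Markov-type monotonicity of Jacobi zeros in the parameters together with the Driver–Jordaan-type extensions of interlacing to real parameter shifts, which are known to hold in this range.
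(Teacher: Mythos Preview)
Your proposal is correct and follows essentially the same architecture as the paper's proof: use the decomposition \eqref{JPTypeI.General.Decomposition}, locate the zeros of each factor $p_{i,j}$, apply Proposition~\ref{prop:realrootedness} iteratively, and for interlacing exploit that only $p_{i,i}$ depends on $\beta$ or on the scalar shift $\bm\alpha\mapsto\bm\alpha+t$, then use Proposition~\ref{lem:preservinginterlacingMult} and Remark~\ref{preserveinterlacing.negativezeros}. The only differences are cosmetic: where you identify each $p_{i,j}$ explicitly as a Jacobi polynomial (via Pfaff for $j\neq i$) and invoke the classical parameter conditions, the paper simply cites \cite[Theorem~1]{dominici2013real} for the location of zeros of the ${}_2F_1$'s directly; and where you appeal to Driver--Jordaan--type Jacobi interlacing for real shifts $t\in(0,2]$, the paper cites the equivalent statement \cite[Eq.~(61)]{martinez2023real}, which is precisely the ${}_2F_1$ reformulation of that result and resolves the ``technical obstacle'' you flag.
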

Observe that if condition \eqref{condition1.real.JP} is satisfied for certain multi-indices $\bm \alpha $ and $\bm n\in\N^{r}$, it is also satisfied for $\bm \alpha +t$, with $t>0$.
\begin{proof}
Let us show first that  
$$
p_{i,i}\in\mathbb{P}(\mathbb{R}_{>0}) \quad \text{and} \quad  p_{i,j}(x)\in\P(\R_{<0}), \quad j\in \{1,\dots, r\}\backslash\{i\}.
$$
We can establish it using some results from \cite{dominici2013real}, many of them re-proved in \cite{martinez2023real}.
    Indeed, since by assumptions, $\alpha_i, \beta>-1$, $\bm n\in \N^r$, and thus,
    \begin{equation*} 
        \alpha_i+\beta+|\bm n|>\alpha_i+n_i-1,  
    \end{equation*}
we have by \cite[Theorem 1, \textit{(ii)}]{dominici2013real} that $p_{i,i}\in\P_{n_i-1}((0,1))$. 

Similarly, by \eqref{assumptionsonalphas} and \eqref{condition1.real.JP}, for every $j\in \{1,\dots, r\}\backslash\{i\}$,
$$ 
     \alpha_j-1<\alpha_i<\alpha_j+n_j-n_i+1,
$$
or equivalently,
$$  
             -n_{i}+2> \alpha_{i}-\alpha_{j}-n_{j}+1,\quad \alpha_{i}-\alpha_{j}+1>0  .
$$  
It remains to use \cite[Theorem 1, \textit{(i)}]{dominici2013real} to conclude that $p_{i,j}\in\P_{n_i-1}(\R_{<0})$. 
By Proposition \ref{prop:realrootedness}, for
\begin{equation} \label{defq_i}
 q_i:=p_{i,1}\boxtimes_{n_{i}-1}\cdots\boxtimes_{n_{i}-1}  p_{i,i-1}\boxtimes_{n_{i}-1} p_{i,i+1} \boxtimes_{n_{i}-1} \cdots\boxtimes_{n_{i}-1} p_{{i},r},
\end{equation}
we have that
\begin{equation}
    \label{eq:locationq}
q_i\in 
    \begin{dcases}
        \P(\R_{>0}) & \text{for $ r$  odd,} \\
        \P(\R_{<0}) & \text{for  $r$  even.} 
    \end{dcases}
\end{equation}
Since the decomposition \eqref{JPTypeI.General.Decomposition} of $P^{(\bm \alpha;\beta)}_{\bm n,i}$ can be written as
\begin{equation}
    \label{decompJPIrevisited}
 P^{(\bm \alpha;\beta)}_{\bm{n},i}=c^{(\bm \alpha;\beta)}_{\bm{n},i} p_{i,i}\boxtimes_{n_i-1} q_{i},
\end{equation}
it remain to Proposition \ref{prop:realrootedness} again to establish the first assertion of the theorem (about the location of the zeros).

A key observation to prove interlacing is that polynomials $p_{i,j}$ in \eqref{p.JP.General}, with $j\neq i$, are independent of the parameter $\beta>-1$, and invariant with respect to the scalar shift $\bm \alpha \mapsto \bm \alpha +t$, $t>0$. Thus, the same property is shared by the polynomial $q_i$ in \eqref{defq_i}. 

On the other hand, by \cite[Eq. (61)]{martinez2023real}, for $0\leq t_1,t_2\leq 2$ and $t_1+t_2\not=0$.
    \begin{equation}
        \HGF{2}{1}{-n_{i}+1,\alpha_{i}+\beta+|\bm{n}|+t_1}{\alpha_{i}+1}{x} \prec \HGF{2}{1}{-n_{i}+1,\alpha_{i}+\beta+|\bm{n}|+t_2}{\alpha_{i}+1+t_2}{x}. 
    \end{equation}

For $r$ odd, using   \eqref{eq:locationq}  and Proposition \ref{lem:preservinginterlacingMult}, we get that
\begin{equation}\label{odd.interlacing.JP}
\begin{split}
    \HGF{2}{1}{-n_{i}+1,\alpha_{i}+\beta+|\bm{n}|+t_1}{\alpha_{i}+1}{x}&\boxtimes_{n_{i}-1}q_i \\ & \preccurlyeq \HGF{2}{1}{-n_{i}+1,\alpha_{i}+\beta+|\bm{n}|+t_2}{\alpha_{i}+1+t_2}{x}\boxtimes_{n_{i}-1}q_i ,
\end{split}
\end{equation}
while for $r$ even, using now Remark \ref{preserveinterlacing.negativezeros}, we obtain 
\begin{equation}\label{even.interlacing.JP}
\begin{split}
    \HGF{2}{1}{-n_{i}+1,\alpha_{i}+\beta+|\bm{n}|+t_2}{\alpha_{i}+1+t_2}{x}&\boxtimes_{n_{i}-1}q_i \\ & \preccurlyeq \HGF{2}{1}{-n_{i}+1,\alpha_{i}+\beta+|\bm{n}|+t_1}{\alpha_{i}+1}{x}\boxtimes_{n_{i}-1}q_i .
\end{split}
\end{equation}
In particular, setting in \eqref{odd.interlacing.JP}-\eqref{even.interlacing.JP}, $t_1=0$ and $t_2=t$, and noticing that a multiplicative constant does not affect interlacing, we conclude that
$$ 
    P^{(\bm \alpha;\beta)}_{\bm n,i}\preccurlyeq P^{(\bm\alpha+t;\beta)}_{\bm n,i} \quad \text{for }r\text{ even} \quad \text{and} \quad P^{(\bm \alpha+t;\beta)}_{\bm n,i}\preccurlyeq P^{(\bm\alpha;\beta)}_{\bm n,i} \quad \text{for }r\text{ odd},
$$ 
while taking $t_1=t$ and $t_2=0$ in \eqref{odd.interlacing.JP}-\eqref{even.interlacing.JP} yields
$$
    P^{(\bm \alpha;\beta+t)}_{\bm n,i}\preccurlyeq P^{(\bm\alpha;\beta)}_{\bm n,i} \quad \text{for }r\text{ even} \quad \text{and} \quad P^{(\bm \alpha;\beta)}_{\bm n,i}\preccurlyeq P^{(\bm\alpha;\beta+t)}_{\bm n,i} \quad \text{for }r\text{ odd}.
$$
\end{proof}

\begin{remark}
The statement about zero interlacing contained in Theorem \ref{Properties.JP} implies that for $r$ even, the zeros of the polynomial $P^{(\bm \alpha+t;\beta)}_{\bm n,i}$ are decreasing with respect to $t$ and increasing with respect to $\beta$; this monotonicity is exactly the opposite if $r$ is odd.

As it follows from the proof of Theorem~\ref{Properties.JP}, conditions \eqref{condition1.real.JP} can be weakened. For instance, if instead of \eqref{condition1.real.JP} we assume the existence of an index $j\neq i$ such that
\begin{equation}\label{condition.real}
   \alpha_j-2<\alpha_i<\alpha_j+n_j-n_i+2,\quad \max_{k\not=i,j}\{\alpha_k\}-1<\alpha_i<\min_{k\not=i,j}\{\alpha_k+n_k\}-n_i+1 ,
\end{equation}
then we arrive at a weaker conclusion that $P^{(\bm \alpha;\beta)}_{\bm n,i} \in \P_{n_i-1}(\R)$.

Notice that additionally to \textit{(i)--(ii)} we could state that   $ P^{(\bm \alpha;\beta)}_{\bm n,i}\preccurlyeq P^{(\bm\alpha+\bm e_i;\beta+1)}_{\bm n-\bm e_i,i}$, where $\bm{e}_i \in \mathbb{N}^r$ is the multi-index that has all entries equal to 0 except the entry of index $i$ which is equal to 1.  This observation is based on the simple fact that, by \eqref{derivativeHyper},
\begin{equation*}
    \frac{d}{dx}\left(P^{(\bm \alpha;\beta)}_{\bm{n},i}(x) \right)\simeq P^{(\bm \alpha+\bm e_i;\beta+1)}_{\bm{n}-\bm e_i ,i}(x).
\end{equation*}
\end{remark}

If we try to enforce the assumptions \eqref{condition1.real.JP} to be valid for every index $i$, we obtain the following simple consequence of Theorem \ref{Properties.JP}:
\begin{corollary}
    \label{JP.realroots.all}
Let $\beta>-1$, $\bm \alpha=(\alpha_1,\dots,\alpha_r)$ satisfy \eqref{assumptionsonalphas} and \eqref{assumptionNikishin}. Then 
for any multi-index $\bm n\in\N^{r}$ of the form
\begin{equation}
    \label{assumptionCor43}
  \bm n=(n, \dots, n, n+1, \dots, n+1)
\end{equation}
(that is, such that for $i<j$, $0\le n_j-n_i \le 1$), conclusions of Theorem \ref{Properties.JP} hold for all  $i\in \{1, \dots, r\}$.
\end{corollary}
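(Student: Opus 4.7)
The plan is to show that the single-index hypothesis \eqref{condition1.real.JP} of Theorem \ref{Properties.JP} is satisfied \emph{for every} $i \in \{1,\ldots,r\}$ under the assumptions made in the corollary; the conclusions (real-rootedness together with the interlacing statements \textit{(i)}--\textit{(ii)}) then follow by simply invoking Theorem \ref{Properties.JP} once for each $i$.

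Fix an index $i$. The left-hand bound $\alpha_1 - 1 < \alpha_i$ comes for free by combining the monotonicity \eqref{assumptionsonalphas} with the Nikishin-type constraint \eqref{assumptionNikishin}, since $\alpha_i \ge \alpha_r > \alpha_1 - 1$, and this works uniformly in $i$. The right-hand bound in \eqref{condition1.real.JP}, after rearranging, becomes
$$(\alpha_j - \alpha_i) + (n_j - n_i) > -1 \qquad \text{for every } j \in \{1,\ldots,r\}.$$
I would then run a short case split on the sign of $j-i$, exploiting the two facts: (a) the multi-index shape \eqref{assumptionCor43} forces $n_j - n_i \in \{-1,0,1\}$, and (b) assumption \eqref{assumptionNikishin} bounds the spread of the $\alpha$'s by $1$, i.e.\ $\alpha_j - \alpha_i \in (-1,1)$. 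Concretely, when $j \ge i$ one has $n_j - n_i \ge 0$ while $\alpha_j - \alpha_i > -1$, so the sum is strictly larger than $-1$; when $j < i$ one has $\alpha_j - \alpha_i > 0$ while $n_j - n_i \ge -1$, so again the sum exceeds $-1$. In all cases the right-hand inequality of \eqref{condition1.real.JP} holds.

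Once \eqref{condition1.real.JP} has been verified uniformly in $i$, Theorem \ref{Properties.JP} delivers both the localization $P^{(\bm\alpha;\beta)}_{\bm n,i} \in \P_{n_i-1}(\Delta_r)$ and the monotonicity/interlacing in $\bm\alpha$ and $\beta$ for every index $i$, which is exactly the content of the corollary. There is no substantial obstacle in this argument; the only thing that has to be watched is that the structural restriction \eqref{assumptionCor43} on $\bm n$ is calibrated precisely to the amount of slack $\alpha_1-\alpha_r<1$ given by \eqref{assumptionNikishin}: larger jumps in $\bm n$ or a wider spread of $\bm\alpha$ would destroy the strict inequality and, accordingly, the case analysis.
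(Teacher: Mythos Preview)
Your argument is correct and is essentially identical to the paper's own proof: you verify the left inequality of \eqref{condition1.real.JP} directly from \eqref{assumptionNikishin}, and you check the right inequality by the same two-case split on $j<i$ versus $j\ge i$, using that \eqref{assumptionCor43} forces $n_j-n_i\in\{-1,0,1\}$ together with the bound $|\alpha_i-\alpha_j|<1$ from \eqref{assumptionNikishin}. The only cosmetic difference is that the paper phrases the right-hand bound as $\alpha_i-\alpha_j<n_j-n_i+1$ rather than your equivalent $(\alpha_j-\alpha_i)+(n_j-n_i)>-1$.
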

From our discussion above it is clear that the assumptions of Corollary \ref{JP.realroots.all} reduce the situation to a step-line (as the one studied already in \cite{MR0602272}) for a Nikishin system of weights. In this sense, the reality of zeros is not surprising, although the interlacing properties seem to be new even in this case.
\begin{proof}
Fix $i\in \{1, \dots, r\}$. Taking into account \eqref{assumptionsonalphas}, condition \eqref{assumptionNikishin} implies that $\alpha_1-1<\alpha_i$.

On the other hand, for any index $j<i$, by \eqref{assumptionCor43}, $n_i \le n_j+1$, and 
$$
-1<\alpha_i-\alpha_j <0 \le n_j -n_i +1.
$$
Analogously, for any index $j>i$, by \eqref{assumptionCor43}, $n_i \le n_j $, and 
$$
0<\alpha_i-\alpha_j <1 \le n_j -n_i +1.
$$
These two sets of inequalities are equivalent to \eqref{condition1.real.JP}. On the other hand, they hold for every $i\in \{1, \dots, r\}$ if and only if $0\le n_j-n_i \le 1$ for $i<j$, and the assertion follows.
\end{proof}

\subsubsection{Zero asymptotics of Jacobi-Piñeiro polynomials of Type I}\ \label{sec:asymptoticsTypeIJP}

We can describe the weak zero asymptotics of a sequence of Type I Jacobi-Pi\~neiro polynomials,
$$
p_{\bm n, i}= P^{(\bm \alpha_{\bm n};\beta_{\bm n})}_{\bm{n},i} , \quad i=1, \dots, r,
$$
where $\bm \alpha_{\bm n} = \left(\alpha_1^{(\bm n)}, \dots, \alpha_r^{(\bm n)} \right)$, $\bm n =(n_1, \dots, n_r)\in \N^r$, under assumption
\begin{equation}
    \label{asymptJPIassumptions}
    \lim_{|\bm n|\to \infty} \frac{\alpha_i^{(\bm n)}}{|\bm n|}=A_i\ge 0, \quad \lim_{|\bm n|\to \infty} \frac{\beta_{(\bm n)}}{|\bm n|}=B\ge 0,  \quad \text{and} \quad \lim_{|\bm n|\to \infty} \frac{n_i}{|\bm n|}=\theta_i>0, \qquad i=1, \dots, r.
\end{equation}
Clearly,   $\theta_1+\dots + \theta_r=1$. Formula \eqref{JPTypeI.General} indicates that we need to consider the parameters 
\begin{equation}
    \label{defABfrak}
\mathfrak a_j^{(i)} \isdef \begin{cases}
    (A_i+B+1)/\theta_i, & \text{if } j=i, \\
    (A_i-A_j-\theta_j)/\theta_i & \text{if } j\ne i,
\end{cases}\quad \text{and}\quad \mathfrak b_k^{(i)} \isdef \begin{cases}
     A_i / \theta_i, & \text{if } k=i, \\
    (A_i-A_k)/\theta_i & \text{if } k\ne i.
\end{cases}
\end{equation}
Restrictions $\mathfrak a_j^{(i)}\notin [-1,0)$ and $\mathfrak b_k^{(i)}\neq -1$ imply that additionally to \eqref{asymptJPIassumptions} we need to assume that
\begin{equation}
    \label{asymptJPIassumptions2}
    A_j \neq A_i +\theta_i, \quad A_j +\theta_j \notin (A_i, A_i + \theta_i], \qquad j\in \{1, \dots, r\}\setminus \{i\}.
\end{equation}
Notice that the assumption that $\mathfrak a_j^{(i)} \neq \mathfrak b_k^{(i)}$ holds automatically.

By Theorem~\ref{thm:StranfHyper2F1}, for any weak-* accumulation point $\mu$ of the normalized zero-counting measures $\chi(p_{\bm n, i})$,  
\begin{equation}
    \label{SfunctionForJPI}
   \mathcal S_\mu(z)= \prod_{j=1}^r  \frac{ z+\mathfrak a_j^{(i)}+1 }{ z+\mathfrak b_j^{(i)}+1  },
\end{equation}
and by Theorem \ref{cor:CauchyTransf}, its Cauchy transform $\mathcal G_\mu(u)$ in a neighborhood of infinity is an algebraic function $y/u$, where $y=y(u)$ satisfies the equation
$$  
    y \prod_{j=1}^r (y +\mathfrak b_j^{(i)}  ) = u (y-1)   \prod_{i=1}^r  (y+\mathfrak a_j^{(i)}  ).
$$  

Let us discuss the case when $\bm \alpha$ and $\beta$ do not depend on $\bm n$, so that
\begin{equation}
    \label{defABfrakConstCase}
\mathfrak a_j^{(i)} \isdef \begin{cases}
    1/\theta_i, & \text{if } j=i, \\
     -\theta_j/\theta_i & \text{if } j\ne i,
\end{cases}\quad \text{and}\quad \mathfrak b_j^{(i)} =0,
\end{equation}
and constrains \eqref{asymptJPIassumptions2} boil down to
\begin{equation}
    \label{assumptThetas}
\theta_j \notin (0,   \theta_i], \qquad j\in \{1, \dots, r\}\setminus \{i\}.
\end{equation}
If we assume that there exists an index $i\in \{1, \dots, r\}$ such that
$$
\theta_i < \min \left\{ \theta_j:\, j\in \{1, \dots, r\}\setminus \{i\} \right\},
$$
then for all sufficiently large values of $|\bm n|$, the inequality in the right-hand side of \eqref{condition1.real.JP} holds. Thus, imposing additionally that $|\alpha_i-\alpha_j|<1$ for all $j$'s, we conclude that any weak-* accumulation point $\mu$ of the normalized zero-counting measures $\chi(p_{\bm n, i})$ is a compactly supported probability measure on $\R_{\ge 0}$, if $r$ is odd, and on $\R_{\le 0}$, otherwise, and such that
 $$
   \mathcal S_\mu(z)= \frac{z+1+1/\theta_i}{(z+1)^r}\prod_{j\ne i}   (z+ 1 - \theta_j/\theta_i ),
$$
while $y=u \mathcal G_\mu(u)$ is a solution of 
$$
y^{r+1} = u  \left(y+\frac{1}{\theta_i} \right)\prod_{j=1}^r \left(y-\frac{\theta_j}{\theta_i} \right).
$$

In the asymptotically diagonal situation, when all $\theta_j=1/r$, we have $
\mathfrak a_j^{(i)} = -1$ if $j\ne i$, which is one of the degenerate cases discussed in Remark \ref{rem:pathologicalcases}. However, the expression for the $S$-transform of the limit measure is still valid:
 $$
   \mathcal S_\mu(z)= \frac{z^{r-1}}{(z+1)^r}  (z+ 1 +1/r  ) ,
$$
as well as for the Cauchy transform $\mathcal G_\mu(u)=y/u$, where $y=y(u)$ solves  
\begin{equation}
    \label{eqCauchyTrTypeIJPdiagonal}
    y^{r+1} = u (y-1)^r \left(y+r \right).
\end{equation}  
We conjecture that, in this case, the support of any limit measure $\mu$ is unbounded.
If the parameters $\bm \alpha$ satisfy \eqref{condition1.real.JP}, it follows from Theorem \ref{Properties.JP} that all zeros of the corresponding polynomial $P^{(\bm \alpha;\beta)}_{\bm n,i}$ are on $\Delta_r$. Furthermore, as follows from the method of proof of \cite[Theorem 3.2]{MR3354972}, even when \eqref{condition1.real.JP} is not satisfied, the zeros still accumulate on $\Delta_r\cup \{\infty\}$.

\begin{example}\label{example44}
If $r=2$ and  $\theta_1 = \theta< 1/2$, the Cauchy transform $\mathcal G_\mu(u)$ of the limit measure $\mu$ is given by $y/u$, where $y=y(u)$ solves the equation
\begin{equation}\label{cubic}
    y^3 = u (y-1)    \left(y+\frac{1}{\theta}  \right)\left(y-\frac{ 1}{\theta} +1 \right) =u\left( y^3 -\left(1+\nu \right) y + \nu\right), \quad \nu=\frac{1}{\theta } \left( \frac{1}{\theta }-1\right)>0.
\end{equation} 
We can take advantage that this is a depressed cubic and use the Cardano formula for its solution, selecting the correct branch noticing that, by the definition of the Cauchy transform, $y(u)>0$ for $u>0$. Namely, consider the functions
$$
f(u)=\left(\frac{u}{1-u}  \right)^{1/3},
$$
analytic in $\C\setminus\left[(-\infty, 0]\cup [1,+\infty)\right]$, whose single-valued branch is fixed by requiring $f(x)>0$ for $x\in (0,1)$. Then we can write
$$
y(u)= \sqrt[3]{\frac{\nu}{2} }\, f(u) \left( \left(\sqrt{1+ \frac{\kappa \,u}{1-u}}+1 \right)^{1/3} -\left(\sqrt{1+ \frac{\kappa \,u}{1-u}}-1 \right)^{1/3} \right) ,
$$
where
$$
\kappa =\kappa(\nu)=\frac{4}{27}\, \frac{(1+\nu)^3}{\nu^2}
$$
and with the branches of the roots taken always positive for $u\in (1,1+\varepsilon)$ for a sufficiently small $\varepsilon>0$. Notice that $\kappa(\nu)$ is a strictly increasing function on $[2,+\infty)$, with $\kappa(2)=1$.

In particular, for $u=-x<0$, denote by $y_\pm$ the boundary values of $y$ on $\R_{<0}$. By the symmetry principle,  $y_-=\overline{y_+ }$ on $\R_{<0}$. 

By the Sokhotski-Plemelj Formula (or Stieltjes' inversion theorem), the density of the limit measure $\mu$ can be recovered as
\begin{align*}
    \mu'(-x) & =\frac{1}{2\pi i(-x)} \left(y_- (-x)-y_+ (-x) \right) = \frac{1}{ \pi x } \Im \left(y_+ (-x) \right)  .
\end{align*}
We have that
$$
\left(\sqrt{1+ \frac{\kappa \,u}{1-u}}+1 \right)^{1/3}_+ \bigg|_{u=-x} =  \left(\sqrt{1- \frac{\kappa \, x}{1+x}}+1 \right)^{1/3} >0 \text{ for } x\in (-c^*, 0),
$$
where $c^*=c^*(\nu)>0$ solves $\kappa \, c^*/(1+c^*)=1$, that is,
\begin{equation}
    \label{defBeta}
 c^* = \frac{1}{\kappa-1}  = 27 \left( \frac{  \theta  (1-\theta  )  }{(1-2 \theta )  (2-\theta  )  (1+\theta )}\right)^2.
\end{equation}
Analogously,
$$
\left(\sqrt{1+ \frac{\kappa \,u}{1-u}}-1 \right)^{1/3}_+ \bigg|_{u=-x} =  \left(1- \sqrt{1- \frac{\kappa \, x}{1+x}}  \right)^{1/3}e^{2\pi i /3} \text{ for } x\in (-c^*, 0).
$$
Finally,
$$
f_+(-x)=\sqrt[3]{\frac{x}{1+x}}\, e^{2\pi i /3}, \quad x<0.
$$
Putting all together, we get that $\mu$ is supported on $[-c^*, 0]$, $\beta$ given in \eqref{defBeta}, with
\begin{equation}
    \label{densityJPI}
    \begin{split}
     \mu'(-x)  = & \frac{\sqrt{3}}{2\pi }\, \sqrt[3]{\frac{\nu}{2} } \,  \frac{ \left( \sqrt{1+x}+ \sqrt{(c^*-x)/c^* }  \right)^{1/3} - \left(\sqrt{1+x}-\sqrt{(c^*-x)/c^* }  \right)^{1/3}  }{x^{2/3} \sqrt{1+x}}. 
     \end{split}
\end{equation}
Symbolic integration allows us to check that $\mu'(-x)$ is the density of a positive probability measure on $[-\beta, 0]$, and that $\mu'(-x)=\frac{\sqrt{3} \nu^{1/3}}{2\pi x^{2/3}}(1+o(1))$ as $x\to 0+$, and with a square root decay as $x\to c^*-$.  

\begin{figure}
    \centering
    \includegraphics[width=.45\textwidth]{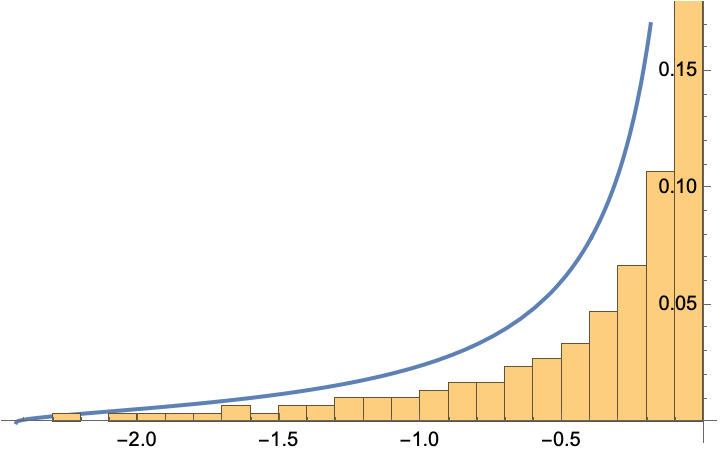}\hfill \includegraphics[width=.45\textwidth]{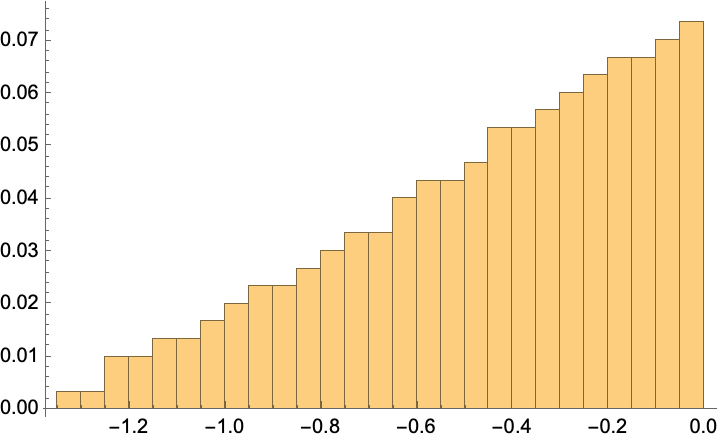}
    \caption{Left: histogram of the zeros of the polynomial $P^{(\bm \alpha ;\beta )}_{\bm{n},1}$, with $\bm \alpha=(1/2, 3/7)$, $\beta=1$, and $\bm n = (300, 600)$, along with the corresponding density \eqref{densityJPI}. Notice that for $\theta=1/3$, measure $\mu$ is supported on $[- 2.43,0]$. The smallest zero of $P^{(\bm \alpha ;\beta )}_{\bm{n},1}$ in this case is approximately $-2.2$. Right: histogram of the cube roots of the same zeros.}
    \label{fig:1}
\end{figure}

As an illustration, we plot the histogram of the zeros, all real and negative, of the polynomial $P^{(\bm \alpha; \beta)}_{\bm{n},1}$, with $\bm \alpha=(1/2, 3/7)$, $\beta=1$, and $\bm n = (300, 900)$, along with the corresponding density \eqref{densityJPI}, see Figure~\ref{fig:1}. For $\theta=1/3$, the asymptotic distribution of zeros is supported on $[- 2.43,0]$.

The diagonal case can be considered taking $\theta \to (1/2)_-$, when \eqref{densityJPI} becomes
$$ 
    \mu'(-x)  = \frac{\sqrt{3}}{2\pi  } \,  \frac{ \left(    \sqrt{1+x}+1\right)^{1/3} - \left(    \sqrt{1+x}-1\right)^{1/3}  }{x^{2/3} \sqrt{1+x} }, \quad x>0.
$$
This is the density of a positive probability measure on $\R_{>0}$, such that $\mu'(-x)=\frac{\sqrt{3}}{\pi (2x)^{2/3}}(1+o(1))$ as $x\to 0+$. 
\end{example}

%%%%%%%%%%%%%%%%
\subsection{Type I Multiple Laguerre polynomials of the first kind}\ \label{subsec:MLTypeI}
%%%%%%%%%%%%%%%%

Let $\bm \alpha=(\alpha_1,\dots,\alpha_r)$ be a multi-index satisfying conditions \eqref{assumptionsonalphas}. Then for $\bm{n}=(n_1,\dots,n_r)\in \N^r$,  the type I multiple Laguerre polynomials of the first kind, $L_{\bm{n},i}^{(\bm\alpha)}$,  $\deg L_{\bm{n},i}^{(\bm\alpha)}\leq n_j-1$, $i=1, \dots, r$, are given by the orthogonality conditions on the function 
\begin{equation}
\label{def:typeIfunctionMLI}
     Q_{\bm{n}}^{(\bm \alpha)}(x) \isdef \sum_{j=1}^r L_{\bm{n},i}^{(\bm\alpha)}(x)\,  x^{\alpha_j}e^{-x},
\end{equation}
namely, 
\begin{equation}  \label{MLtypeI}
 \int_0^{+\infty}        Q_{\bm{n}}^{(\bm \alpha)}(x) x^{ k}  \, dx = 0, \qquad 0 \leq k \leq |\bm{n}|-2,
\end{equation}
together with the normalization
$$
 \int_0^{+\infty}   Q_{\bm{n}}^{(\bm \alpha)}(x) x^{ |\bm{n}|-1}  \, dx = 1.
$$

In \cite{JPOP23}, it was shown that these are hypergeometric polynomials: using the notation introduced before \eqref{JPTypeI.General}, 
\begin{equation}\label{LaguerreFKType1.General}
     L_{\bm{n},i}^{(\bm\alpha)}(x)=c_{\bm{n},i}^{(\bm\alpha)} \HGF{r}{r}{-n_i+1,\alpha_i+1-\bm \alpha^{(i)}-\bm{n}^{(i)}}{\alpha_i+1,\alpha_i+1-\bm \alpha^{(i)}}{x}
\end{equation}
where 
$$c_{\bm{n},i}^{(\bm\alpha)} =(-1)^{|\bm{n}|-1}\left( (n_i-1)!\Gamma(\alpha_i+1)\prod_{\substack{ 1\leq i\leq r \\ j\neq i}}\raising{\alpha_{j}-\alpha_i}{n_{j}}\right)^{-1}. 
$$
For $r=2$, this formula was previously derived in \cite{branquinho2023hahn}).
    
Using this expression and Theorem \ref{thm:multiplicativeConvHG}, we get the following representation for polynomials \eqref{LaguerreFKType1.General} for $i=1,\dots,r$:
\begin{equation}\label{L.TypeI.General.Decomposition}
 L^{(\bm \alpha)}_{\bm{n},i}=c^{(\bm \alpha)}_{\bm{n},i}  p_{i,1}\boxtimes_{n_i-1} p_{i,2}\boxtimes_{n_i-1}\cdots\boxtimes_{n_i-1} p_{i,r},
\end{equation}
where
\begin{equation}\label{p.L.General}
  p_{i,j}(x)=\begin{cases}
   \displaystyle    \HGF{2}{1}{-n_i+1,\alpha_i-\alpha_j-n_{j}+1}{\alpha_i-\alpha_{j}+1}{x}, &  j\in \{1,\cdots,r\}\setminus \{i\},  \\[4mm]
   \displaystyle  \HGF{1}{1}{-n_i+1}{\alpha_i+1}{x} , & j=i.
       \end{cases}
\end{equation}

Notice that for $i\neq j$ the $p_{i,j}$ are exactly the same polynomials appearing in \eqref{p.JP.General}, thus we have the relation 
 \begin{equation}\label{JP-L.relation}
     L^{(\bm \alpha)}_{\bm n,i}   \simeq  \left(v_i(x)\boxtimes_{n_i-1} P^{(\bm \alpha;\beta)}_{\bm{n},i}\right)
 \end{equation}
with
\begin{equation}\label{q.polynomial}
    v_i(x)=\HGF{1}{1}{-n_i+1}{\alpha_i+\beta+|\bm n|}{x} = \frac{(n_i-1)!}{\raising{\alpha_i+\beta+|\bm n|}{n_i-1}} \, L_{n_i-1}^{(\alpha_i+\beta+|\bm n|-1)}(x) , 
\end{equation}
where $L_n^{(\alpha)}$ is the Laguerre polynomial. In other words, the Type I multiple Laguerre polynomial of the first kind can be obtained as a finite multiplicative convolution of the Type I Jacobi-Piñeiro polynomials with the same parameters and a standard Laguerre polynomial.

\subsubsection{Real zeros: monotonicity and interlacing}\

The discussion about real zeros carried out in Section~\ref{sec:zerosJPTypeI} applies to Type I multiple Laguerre polynomials, with the only modification that now in \eqref{deltas},  $\Delta_1=(0,+\infty)$. In particular, under assumption \eqref{assumptionNikishin}, these polynomials form a Nikishin system, and thus, for close-to-diagonal multi-indices $\bm n$ their zeros will be on $\Delta_r$. 

In addition, we have the following
\begin{theorem}\label{Properties.LTypeI}
Let $\bm \alpha=(\alpha_1,\dots,\alpha_r)$ satisfy \eqref{assumptionsonalphas}. Assume that for a multi-index $\bm n\in\N^{r}$ and for a specific index $i\in \{1, 2,\dots, r\}$,  condition \eqref{condition1.real.JP} is satisfied. Then $L^{(\bm \alpha)}_{\bm n,i}\in \P_{n_i-1}(\Delta_r)$. 

Additionally, for $0< t\leq 2$,
\begin{enumerate}[(i)]
    \item $L^{(\bm \alpha+t)}_{\bm n,i}\preccurlyeq L^{(\bm\alpha)}_{\bm n,i}$,  if $r$ is even, and 
    \item $L^{(\bm \alpha)}_{\bm n,i}\preccurlyeq L^{(\bm\alpha+t)}_{\bm n,i}$, if $r$ is odd.
\end{enumerate}
\end{theorem}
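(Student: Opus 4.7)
The plan is to mirror the argument used for Theorem~\ref{Properties.JP}, exploiting the multiplicative decomposition \eqref{L.TypeI.General.Decomposition} of $L^{(\bm\alpha)}_{\bm n,i}$ into the building blocks $p_{i,j}$ in \eqref{p.L.General}, and then transferring the zero location and interlacing through Propositions~\ref{prop:realrootedness} and \ref{lem:preservinginterlacingMult}. The simplification relative to the Jacobi--Pi\~neiro case is that the diagonal factor $p_{i,i}$ is now a classical Laguerre polynomial rather than a $_2F_1$, and no parameter $\beta$ appears.

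For the root location statement, note that for $j\neq i$ the factor $p_{i,j}$ in \eqref{p.L.General} coincides verbatim with its counterpart in \eqref{p.JP.General}. Hence, under \eqref{condition1.real.JP}, \cite[Theorem~1, (i)]{dominici2013real} (as already used in the proof of Theorem~\ref{Properties.JP}) gives $p_{i,j}\in\P_{n_i-1}(\R_{<0})$. The remaining factor
\[
p_{i,i}(x)=\HGF{1}{1}{-n_i+1}{\alpha_i+1}{x}
\]
is proportional to the classical Laguerre polynomial $L_{n_i-1}^{(\alpha_i)}$ and, since $\alpha_i>-1$ by \eqref{assumptionsonalphas}, lies in $\P_{n_i-1}(\R_{>0})$. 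Setting
\[
q_i \isdef p_{i,1}\boxtimes_{n_i-1}\cdots\boxtimes_{n_i-1} p_{i,i-1}\boxtimes_{n_i-1} p_{i,i+1}\boxtimes_{n_i-1}\cdots\boxtimes_{n_i-1} p_{i,r},
\]
an $(r-1)$-fold multiplicative convolution of polynomials in $\P(\R_{<0})$, Proposition~\ref{prop:realrootedness} yields $q_i\in\P(\R_{>0})$ when $r$ is odd and $q_i\in\P(\R_{<0})$ when $r$ is even. Since $L^{(\bm\alpha)}_{\bm n,i}\simeq p_{i,i}\boxtimes_{n_i-1} q_i$ with $p_{i,i}\in\P(\R_{>0})$, a second application of Proposition~\ref{prop:realrootedness} places all zeros of $L^{(\bm\alpha)}_{\bm n,i}$ in $\Delta_r$.

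For the interlacing statements (i) and (ii), the crucial observation is that each $p_{i,j}$ with $j\neq i$ depends on $\bm\alpha$ only through the difference $\alpha_i-\alpha_j$ and is therefore invariant under the shift $\bm\alpha\mapsto\bm\alpha+t$; consequently $q_i$ is unchanged as well. Only $p_{i,i}$ moves, with its denominator parameter shifting from $\alpha_i+1$ to $\alpha_i+1+t$. The standard strict monotonicity of the zeros of the Laguerre polynomials in $\alpha$ (the $_1F_1$ analogue of \cite[Eq.~(61)]{martinez2023real}) gives
\[
\HGF{1}{1}{-n_i+1}{\alpha_i+1}{x}\;\prec\;\HGF{1}{1}{-n_i+1}{\alpha_i+1+t}{x},\qquad 0<t\le 2.
\]
When $r$ is odd, $q_i\in\P(\R_{>0})$, and Proposition~\ref{lem:preservinginterlacingMult} preserves interlacing through multiplication by $q_i$, yielding $L^{(\bm\alpha)}_{\bm n,i}\preccurlyeq L^{(\bm\alpha+t)}_{\bm n,i}$, which is (ii). When $r$ is even, $q_i\in\P(\R_{<0})$, and Remark~\ref{preserveinterlacing.negativezeros} reverses the interlacing, producing $L^{(\bm\alpha+t)}_{\bm n,i}\preccurlyeq L^{(\bm\alpha)}_{\bm n,i}$, which is (i).

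The only real care needed is the parity-dependent reversal of interlacing, which is handled cleanly via Remark~\ref{preserveinterlacing.negativezeros}; apart from this bookkeeping, the argument is a direct and slightly simpler analogue of the proof of Theorem~\ref{Properties.JP}, with the role of the $_2F_1$ diagonal factor replaced by a classical Laguerre polynomial.
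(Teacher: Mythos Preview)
Your proof is correct. You work directly from the decomposition \eqref{L.TypeI.General.Decomposition} and reprove the zero-location and interlacing statements from scratch, essentially rerunning the proof of Theorem~\ref{Properties.JP} with the diagonal ${}_2F_1$ factor replaced by a Laguerre polynomial. The paper instead takes a shortcut: it invokes the relation \eqref{JP-L.relation}, which writes $L^{(\bm\alpha)}_{\bm n,i}\simeq v_i\boxtimes_{n_i-1}P^{(\bm\alpha;\beta)}_{\bm n,i}$ with $v_i\in\P_{n_i-1}(\R_{>0})$ a Laguerre polynomial, and then simply appeals to the already-proved Theorem~\ref{Properties.JP} together with Propositions~\ref{prop:realrootedness} and \ref{lem:preservinginterlacingMult} (and Remark~\ref{preserveinterlacing.negativezeros}) to transfer location and interlacing through the extra $\boxtimes_{n_i-1}$ by a positive-rooted factor. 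Your route is more self-contained and transparent about where each ingredient enters; the paper's route is shorter because it recycles the Jacobi--Pi\~neiro result wholesale. One small citation correction: the Laguerre interlacing you use is exactly \cite[Eq.~(55)]{martinez2023real} (as quoted later in the paper), not the ${}_1F_1$ analogue of Eq.~(61).
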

\begin{proof}
The statement follows immediately from representation \eqref{JP-L.relation} and  Theorem \ref{Properties.JP}, and the fact that all zeros of the Laguerre polynomial in \eqref{q.polynomial} are positive.
\end{proof}
Additionally, as for the Jacobi-Pi\~neiro polynomials, from \eqref{derivativeHyper} it follows that $ L^{(\bm \alpha)}_{\bm n,i}\preccurlyeq L^{(\bm\alpha+\bm e_i)}_{\bm n-\bm e_i,i}$.

\begin{corollary}
    \label{TypeILkind1.realroots.all}
Let  $\bm \alpha=(\alpha_1,\dots,\alpha_r)$ satisfy \eqref{assumptionsonalphas} and \eqref{assumptionNikishin}. Then for any multi-index $\bm n\in\N^{r}$ on the step-line, i.e., of the form \eqref{assumptionCor43}, conclusions of Theorem \ref{Properties.LTypeI} hold for all  $i\in \{1, \dots, r\}$.
\end{corollary}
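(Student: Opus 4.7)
The plan is simply to verify that the hypothesis \eqref{condition1.real.JP} of Theorem \ref{Properties.LTypeI} is satisfied for every index $i\in\{1,\dots,r\}$ when $\bm n$ lies on the step-line; once this is done, the conclusion is immediate from Theorem \ref{Properties.LTypeI}. The argument is a direct adaptation of the proof of Corollary \ref{JP.realroots.all}, since Theorem \ref{Properties.LTypeI} has exactly the same condition on the parameters as Theorem \ref{Properties.JP}.

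First I would unfold what assumption \eqref{assumptionNikishin} gives when combined with \eqref{assumptionsonalphas}: since $\alpha_1>\alpha_2>\dots>\alpha_r$ and $\alpha_1-1<\alpha_r$, we have $|\alpha_i-\alpha_j|<1$ for all $i,j$. In particular, the left-hand inequality $\alpha_1-1<\alpha_i$ of \eqref{condition1.real.JP} holds for every $i$. So the only point that needs attention is the right-hand inequality
$$
\alpha_i<\min_{1\le j\le r}\{\alpha_j+n_j\}-n_i+1,\qquad\text{i.e.,}\qquad \alpha_i-\alpha_j<n_j-n_i+1\ \text{ for all }j.
$$

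Next I would split the verification into the cases $j<i$ and $j>i$, exploiting the step-line structure \eqref{assumptionCor43}, which gives $n_j\ge n_i-1$ when $j<i$ and $n_j\ge n_i$ when $j>i$. For $j<i$, the ordering \eqref{assumptionsonalphas} yields $\alpha_i-\alpha_j<0$, while $n_j-n_i+1\ge 0$, so the inequality is automatic. For $j>i$, we have $0<\alpha_i-\alpha_j<1$ (using $|\alpha_i-\alpha_j|<1$), while $n_j-n_i+1\ge 1$, and again the inequality holds. The case $j=i$ is trivial. Thus \eqref{condition1.real.JP} holds for every $i$, and Theorem \ref{Properties.LTypeI} can be invoked for each $i\in\{1,\dots,r\}$.

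There is no genuine obstacle here; the proof is a short bookkeeping calculation and is essentially identical to the one already carried out for Corollary \ref{JP.realroots.all}. The only thing worth flagging is that the conclusion is a real-rootedness-plus-interlacing statement for \emph{all} polynomial components $L^{(\bm\alpha)}_{\bm n,i}$ simultaneously, which can fail off the step-line; the step-line hypothesis is used precisely to make $n_j-n_i$ small enough (in absolute value) to be absorbed by the slack $1-|\alpha_i-\alpha_j|>0$ provided by the Nikishin condition \eqref{assumptionNikishin}.
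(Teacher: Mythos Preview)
Your proposal is correct and follows essentially the same approach as the paper: the paper does not give a separate proof of this corollary, since it is an immediate consequence of the verification already carried out in the proof of Corollary~\ref{JP.realroots.all}, and your argument reproduces that verification step for step.
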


\subsubsection{Zero asymptotics of Type I Multiple Laguerre polynomials of first kind}\

As we did for the Type I Jacobi-Pi\~neiro polynomials, we can describe the weak zero asymptotics of a sequence of rescaled Type I Multiple Laguerre polynomials of first kind, namely, 
$$
p_{\bm n, i}(x)= L^{(\bm \alpha_{\bm n} )}_{\bm{n},i} (n_i \, x), \quad i=1, \dots, r,
$$
where $\bm \alpha_{\bm n} = \left(\alpha_1^{(\bm n)}, \dots, \alpha_r^{(\bm n)} \right)$, $\bm n =(n_1, \dots, n_r)\in \N^r$, under assumption
\begin{equation}
    \label{asymptMLIassumptions}
    \lim_{|\bm n|\to \infty} \frac{\alpha_i^{(\bm n)}}{|\bm n|}=A_i\ge 0,    \quad \text{and} \quad \lim_{|\bm n|\to \infty} \frac{n_i}{|\bm n|}=\theta_i>0, \qquad i=1, \dots, r.
\end{equation}
Clearly,   $\theta_1+\dots + \theta_r=1$. Formula \eqref{LaguerreFKType1.General} indicates that with the parameters $\mathfrak a_j^{(i)}$ and $\mathfrak b_j^{(i)}$ defined in \eqref{defABfrak}, satisfying the same restrictions \eqref{asymptJPIassumptions2}, namely,
$$   
    A_j \neq A_i +\theta_i, \quad A_j +\theta_j \notin (A_i, A_i + \theta_i], \qquad j\in \{1, \dots, r\}\setminus \{i\},
$$
Theorem~\ref{thm:StranfHyper2F1} implies that for any weak-* accumulation point $\mu$ of the normalized zero-counting measures $\chi(p_{\bm n, i})$,  
\begin{equation}
    \label{SfunctionForIML}
   \mathcal S_\mu(z)=  \frac{ \prod_{j=1, \, j\neq i}^r  \left( z+\mathfrak a_j^{(i)}+1 \right)}{ \prod_{j=1}^r  \left( z+\mathfrak b_j^{(i)}+1\right)  },
\end{equation}
and by Theorem \ref{cor:CauchyTransf}, the Cauchy transform $\mathcal G_\mu(u)$ of the limit measure $\mu$ in a neighborhood of infinity is an algebraic function $y/u$, where $y=y(u)$ satisfies the equation
$$  
    y \prod_{j=1}^r (y +\mathfrak b_j^{(i)}  ) = u (y-1)   \prod_{i=1, \, j\neq i}^r  (y+\mathfrak a_j^{(i)}  ).
$$  

Let us discuss the case when $\bm \alpha$ does not depend on $\bm n$, so that identities \eqref{defABfrakConstCase} hold, and constrains \eqref{asymptJPIassumptions2} once again boil down to \eqref{assumptThetas}.
If we assume that there exists an index $i\in \{1, \dots, r\}$ such that
$$
\theta_i < \min \left\{ \theta_j:\, j\in \{1, \dots, r\}\setminus \{i\} \right\},
$$
imposing additionally that $|\alpha_i-\alpha_j|<1$ for all $j$'s, then as in Section \ref{sec:asymptoticsTypeIJP} we conclude that any weak-* accumulation point $\mu$ of the normalized zero-counting measures $\chi(p_{\bm n, i})$ is a compactly supported probability measure on $\R_{\ge 0}$, if $r$ is odd, and on $\R_{\le 0}$, otherwise, and such that
 $$
   \mathcal S_\mu(z)= \frac{ 1 }{(z+1)^r}\prod_{j\ne i}   (z+ 1 - \theta_j/\theta_i ).
$$
\begin{example}\label{exampler=2TypeILag}
Like in Example \ref{example44}, if $r=2$, and  $\theta_1 = \theta< 1/2$, the Cauchy transform $\mathcal G_\mu(u)$ of the limit measure $\mu$ is given by $y/u$, where $y=y(u)$ solves the equation
$$  
    y^3 = u (y-1)     \left(y-\frac{ 1}{\theta} +1 \right).
$$  
With the change $y=3w-u/3$ we get
$$
27 w^3+  \left(\frac{3}{\theta}    -u \right)u w+\left(\frac{u  }{3\theta} +1 -\frac{1}{\theta}-\frac{2 u^2}{27}\right) u =0.
$$
Proceeding as in Example \ref{example44}, we can use Cardano's formula and select the right branch of the solution observing that $w>0$ for $u>0$. The Sokhotski–Plemelj formula shows then that the limit measure $\mu$ lives on the interval $[-c^*,0]$, with 
$$
c^* =\frac{9 (1-\theta ) \theta -2+2 \sqrt{(1-3 (1-\theta ) \theta )^3} }{ \theta (1-2 \theta )^2
  }>0.
$$
See Figure~\ref{fig:2} for a comparison of some actual values of the smallest zeros with the predicted value of $-c^*$.
\begin{figure}
    \centering
    \includegraphics[width=.65\textwidth]{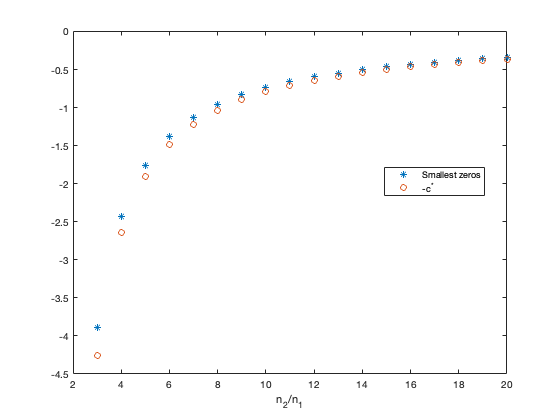}
    \caption{Values of the smallest zeros (marked with $*$) and of the corresponding values of $-c^*$ (left end-point of the limit zero distribution, marked with 'o') for $L^{(\bm \alpha)}_{(n_1,n_2),1}$, with $\bm \alpha=(1/2, 3/7)$, $ n_1 =250$, and different values of $n_2/n_1$. }
    \label{fig:2}
\end{figure}
\end{example}

\subsection{Type I Multiple Laguerre of the second kind}\label{sec:TIML2k}\

Let $\alpha>-1$ and $\bm c=(c_1,\dots ,c_r)\in \R_{>0}^r$ be a multi-index such that $c_i\neq c_j$ for $i\neq j$. Then, for $\bm{n}=(n_1,\dots,n_r)\in \N^r$,  the type I multiple Laguerre polynomials of the second kind, $\mathcal L_{\bm{n},i}^{(\alpha, \bm c)}$,  $\deg \mathcal L_{\bm{n},i}^{(\alpha, \bm c)}\leq n_j-1$, $i=1, \dots, r$, are given by the orthogonality conditions on the function 
\begin{equation*}
     Q_{\bm{n}}^{(\alpha, \bm c)}(x) \isdef \sum_{j=1}^r \mathcal L_{\bm{n},i}^{(\alpha, \bm c)}(x)\,  x^{\alpha}e^{-c_jx},
\end{equation*}
namely, 
\begin{equation}  \label{MLtypeIkind2bis}
 \int_0^{+\infty}        Q_{\bm{n}}^{(\alpha, \bm c)}(x) x^{ k}  \, dx = 0, \qquad 0 \leq k \leq |\bm{n}|-2,
\end{equation}
together with the normalization
$$
 \int_0^{+\infty}   Q_{\bm{n}}^{(\alpha, \bm c)}(x) x^{ |\bm{n}|-1}  \, dx = 1.
$$
Notice that in this case, the weights $w_j$ in \eqref{Lkind2weights} no longer form a Nikishin system, although they are an AT-system, see, e.g.~\cite[\S 23.4]{MR2542683}. 

It was shown\footnote{\, For an alternative formula for the case of $r=2$, see also \cite[Proposition 8.1]{branquinho2023hahn}.} in \cite[Theorem 4.1]{JPOP24} that the Type I multiple Laguerre polynomials of the second kind are multivariable Kampé de Fériet polynomials: for $\bm c=(c_1, \dots, c_r)$ and $i=1,\dots,r$,
\begin{equation}
    \label{eq:KampeforLaguerre}
\mathcal L_{\bm{n},i}^{(\alpha, \bm c)}(x)\simeq 
F_{ 1: 0 ; 1;\cdots;1}^{1: 0 ;0;\cdots ;0}\left[\left.\begin{array}{c}
-n_i+1: \cdot \ ; \hat{\bm n}_i \\
 \alpha +1+|\bm n|-n_i:\cdot \ ;\cdots \ ; \cdot 
\end{array} \right\rvert\, c_i x, \bm d_i\right] ,
\qquad \bm d_i\isdef\frac{c_i}{c_i-\hat{\bm c}_i}\neq 0.
\end{equation}
Hence, as an application of Theorem~\ref{thm:multipleKdFfactorization} we conclude that 
\begin{proposition} \label{prop:zerosTypeILagsecondKind}
The Type I Multiple Laguerre polynomials of the second kind can be expressed as an additive convolution:
\begin{equation}
    \label{decompTypeILaguerrekind1}
\mathcal L_{\bm{n},i}^{(\alpha, \bm c)}\simeq 
p_{i,1}\boxplus_{n_i-1} \cdots \boxplus_{n_i-1}  p_{i,r}, 
\end{equation}
where
\begin{equation}\label{p.JP.Generalnew}
  p_{i,j}(x)=\begin{cases}
   \displaystyle    \HGF{1}{1}{-n_i+1}{-n_j-n_i+2}{(c_i-c_j)x} , &  j\in \{1,\cdots,r\}\setminus \{i\},  \\[4mm]
   \displaystyle  \HGF{1}{1}{-n_i+1}{ \alpha +1+|\bm n|-n_i}{c_ix} , & j=i.
       \end{cases}
\end{equation}
\end{proposition}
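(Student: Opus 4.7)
The plan is a direct application of Theorem~\ref{thm:multipleKdFfactorization} to the Kampé de Fériet representation \eqref{eq:KampeforLaguerre}, with $n=n_i-1$; the theorem's ``first variable'' corresponds to $c_i x$, while its constant coefficients $c_2,\dots,c_r$ are the $r-1$ entries of $\bm d_i$. Reading off the block sizes and parameters from \eqref{eq:KampeforLaguerre}, the theorem yields
\begin{equation*}
\mathcal L_{\bm n, i}^{(\alpha,\bm c)}(x) \simeq q_1 \boxtimes_{n_i-1} \left( q_0 \boxplus_{n_i-1} q_2 \boxplus_{n_i-1} \cdots \boxplus_{n_i-1} q_r\right),
\end{equation*}
where $q_0(x)=\HGF{1}{1}{-n_i+1}{\alpha+1+|\bm n|-n_i}{-x}$, $q_1(x)=\HGF{1}{0}{-n_i+1}{\cdot}{-c_i x}$, and each $q_l$ for $l\ge 2$ is a ${}_1F_1$ of the form prescribed by the theorem, with the relevant entry of $\hat{\bm n}_i$ supplying the lower parameter and the argument being $-x/d_{i,l-1}$, for a suitable bijection $l\mapsto j(l)$ from $\{2,\dots,r\}$ onto $\{1,\dots,r\}\setminus\{i\}$.

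The decisive simplification is that $q_1(x)=(1+c_i x)^{n_i-1}$ has all roots at $-1/c_i$, so by \eqref{identityMult2} the multiplicative convolution by $q_1$ is, up to a nonzero multiplicative constant, the dilation operator $\dil{-1/c_i}$ acting on the other factor. Pushing this dilation inside the additive convolution via \eqref{identityAdd3a}, we obtain
\begin{equation*}
\mathcal L_{\bm n, i}^{(\alpha,\bm c)}(x) \simeq \widetilde q_0 \boxplus_{n_i-1} \widetilde q_2 \boxplus_{n_i-1} \cdots \boxplus_{n_i-1} \widetilde q_r, \qquad \widetilde q_l(x)\isdef q_l(-c_i x),
\end{equation*}
thus reducing the factorization to a purely additive one.

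It then suffices to match each $\widetilde q_l$ with a polynomial from \eqref{p.JP.Generalnew}. The substitution $x\mapsto -c_i x$ in $q_0$ turns the argument $-x$ into $c_i x$, giving $\widetilde q_0=p_{i,i}$. For $l\ge 2$, the same substitution converts $-x/d_{i,l-1}$ into $(c_i-c_{j(l)})x$ and leaves the lower parameter equal to $-n_{j(l)}-n_i+2$, yielding $\widetilde q_l=p_{i,j(l)}$. Since $l\mapsto j(l)$ is a bijection onto $\{1,\dots,r\}\setminus\{i\}$ and $\boxplus_{n_i-1}$ is commutative, rearranging gives \eqref{decompTypeILaguerrekind1}. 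The main obstacle is really just bookkeeping: tracking the sign conventions $(-1)^{i_l+j_l}$ inherited from Theorem~\ref{thm:multipleKdFfactorization} together with the proportionality constants absorbed into ``$\simeq$'', and verifying that the identities \eqref{identityMult2} and \eqref{identityAdd3a} combine to produce exactly the parameters of \eqref{p.JP.Generalnew}; no new ideas beyond the theorem and those two convolution identities are needed.
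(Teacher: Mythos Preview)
Your proposal is correct and follows essentially the same route as the paper: apply Theorem~\ref{thm:multipleKdFfactorization} to the Kamp\'e de F\'eriet representation \eqref{eq:KampeforLaguerre}, observe that the multiplicative factor $q_1(x)=(1+c_ix)^{n_i-1}$ acts as the dilation $\dil{-1/c_i}$ via \eqref{identityMult2}, push this dilation through the additive convolution using \eqref{identityAdd3a}, and then identify each dilated term with the corresponding $p_{i,j}$. The only difference is cosmetic labeling---the paper reindexes so that the multiplicative factor is called $q_i$ rather than $q_1$---while your use of an explicit bijection $l\mapsto j(l)$ arguably tracks Theorem~\ref{thm:multipleKdFfactorization}'s conventions more transparently.
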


\begin{proof}
By Theorem~\ref{thm:multipleKdFfactorization} (and the symmetry of the entries) we know that
$$
\mathcal L_{\bm{n},i}^{(\alpha, \bm c)}\simeq q_i \boxtimes_n (q_0\boxplus_n q_1\boxplus_n \cdots \boxplus_n q_{i-1}\boxplus_n q_{i+1}\boxplus_n \cdots \boxplus_n q_r), 
$$
where
\begin{align*}
q_0(x)& \isdef \HGF{1}{1}{-n}{ \alpha +1+|\bm n|-n_i}{-x}, \qquad  q_i(x) \isdef \HGF{1}{0}{-n}{\cdot}{-c_ix},\quad \text{and} \\
q_j(x)& \isdef \HGF{1}{1}{-n}{-n_j-n_i+2}{-\frac{x(c_i-c_j)}{c_i}}  \qquad \text{for }j\in\{1,\dots, r\}\setminus\{i\}.
\end{align*}
Notice that the multiplicative convolution by $q_i$ is just a dilation by $\frac{1}{-c_i}$, and, by \eqref{identityAdd3a}, this dilation can be distributed among each term in the additive convolution. Therefore, if we let 
$p_{i,i}=\dil{\frac{1}{-c_i}}q_0$ and $p_{i,j}\dil{\frac{1}{-c_i}}=q_j$ for $j\neq i$, we conclude the desired result.
\end{proof}

\subsubsection{Zero asymptotics of Type I Multiple Laguerre polynomials of the second kind}\ \label{sec:asymptTypeILagueresecond}

Unfortunately, the decomposition in Proposition \ref{prop:zerosTypeILagsecondKind} does not allow us to conclude that the zeros of $\mathcal L_{\bm{n},i}^{(\alpha, \bm c)}$ are real. However, we can still obtain information about any accumulation point of the normalized zero-counting measures of these polynomials. 

Let $\bm n =(n_1, \dots, n_r)\in \N^r$ and $\alpha_{\bm n} >-1$ be such that
\begin{equation}
    \label{asymptMLIkind2assumptions}
    \lim_{|\bm n|\to \infty} \frac{\alpha_{\bm n}}{|\bm n|}=A\ge 0,    \quad \text{and} \quad \lim_{|\bm n|\to \infty} \frac{n_i}{|\bm n|}=\theta_i>0, \qquad i=1,\dots,r.
\end{equation}
Moreover, assume that $\bm c_{\bm n}=\left(c_1^{(\bm n)}, \dots, c_r^{(\bm n)}\right)\in \R_{>0}^r$, with $c_i^{(\bm n)}\neq c_j^{(\bm n)}$ for $i\neq j$, depends on the multi-index $\bm n$ in such a way that the limit
\begin{equation}
    \label{asymptMLIkind2assumptions2}
    \lim_{\bm n} \bm c_{\bm n} = \bm c =(c_1, \dots, c_r)\in \R_{>0}^r
\end{equation}
exists, with all $c_i\neq c_j$ for $i\neq j$. In consequence, the vectors $\bm d_i$ in \eqref{eq:KampeforLaguerre} depend also on the multi-indices $\bm n$, but have finite limits that we will denote again by $\bm d_i$: in other words, $\bm d_i^{(\bm n)} \to \bm d_i$, $i=1,\dots,r$.
 
\begin{remark} \label{rem:scalingLaguerre}
It is easy to see that a simple re-scaling $x\mapsto x/N$, $N=N(\bm n)$, in \eqref{MLtypeIkind2bis} allows us to tackle the zero asymptotics also in the case of parameters $\bm c$ satisfying
$$
 \lim_{\bm n} \frac{\bm c_{\bm n}}{N} = \bm c = (c_1, \dots, c_r)\in \R_{>0}^r
$$
Notice that under this assumption, the limits $\bm d_i^{(\bm n)} \to \bm d_i$, $i=1,\dots,r$, still hold.
\end{remark}

\begin{theorem}
Under the assumptions above, the Cauchy transform $y=\mathcal G_\mu(u)$ of the limiting distribution of zeros of the rescaled polynomials $ \mathcal L_{\bm{n},i}^{(\alpha, \bm c)}(n_i \, x)$, satisfies the equation
\begin{equation}
    \label{eq:CauchyTypeILagKind2}
 u=\frac{1}{y}-\frac{A+1}{\theta_i(y-c_i)}+\sum_{\substack{1\leq j\leq r\\ j\neq i}} \frac{\theta_j}{\theta_i(y-c_i+c_j)}.
\end{equation}
\end{theorem}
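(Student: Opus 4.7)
The plan is to combine the additive-convolution decomposition of Proposition~\ref{prop:zerosTypeILagsecondKind} with Theorem~\ref{prop:finiteAsymptotics.updated}(i). Since dilations distribute over the additive finite free convolution by \eqref{identityAdd3a}, applying the rescaling $x \mapsto n_i x$ to \eqref{decompTypeILaguerrekind1} gives
\[
\mathcal{L}_{\bm n, i}^{(\alpha, \bm c)}(n_i x) \simeq p_{i,1}(n_i x) \boxplus_{n_i-1} \cdots \boxplus_{n_i-1} p_{i,r}(n_i x).
\]
Once each rescaled factor $p_{i,j}(n_i x)$ is shown to converge in moments to some sequence with associated $R$-transform $\mathcal{R}_{\mu_{i,j}}$, Theorem~\ref{prop:finiteAsymptotics.updated}(i) yields $\mathcal{R}_\mu(w) = \sum_{j=1}^r \mathcal{R}_{\mu_{i,j}}(w)$, and the equation \eqref{eq:CauchyTypeILagKind2} follows immediately from the identity $u = 1/y + \mathcal{R}_\mu(y)$ with $y = \mathcal{G}_\mu(u)$.

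The next step is to compute each $\mathcal{R}_{\mu_{i,j}}$ separately. For $j=i$, the change of variable $y = c_i x$ in $p_{i,i}(n_i x) = {}_1F_1(-n_i+1; \alpha+1+|\bm n|-n_i; c_i n_i x)$ produces a confluent hypergeometric polynomial of the form treated in the proof of Theorem~\ref{thm:StranfHyper2F1}, with $\beta_n/(n_i-1) \to B_i \isdef (A+1)/\theta_i - 1$; formulas \eqref{Sfor1F1} and \eqref{RtransformStranform} yield the $R$-transform $(B_i+1)/(1-w)$ for the limit in the $y$-variable, and the dilation rule $\mathcal{R}_{cX}(w) = c\,\mathcal{R}_X(cw)$ then gives
\[
\mathcal{R}_{\mu_{i,i}}(w) = \frac{A+1}{\theta_i(c_i - w)}.
\]
For $j \neq i$, the analogous change $y = (c_i - c_j) x$ in $p_{i,j}(n_i x)$ produces a rescaled $_1F_1$ with $\beta_n/(n_i-1) \to B_j \isdef -(\theta_i+\theta_j)/\theta_i$, and the same reasoning gives
\[
\mathcal{R}_{\mu_{i,j}}(w) = \frac{\theta_j}{\theta_i(w - c_i + c_j)}.
\]
Summing these contributions and adding $1/y$ reproduces \eqref{eq:CauchyTypeILagKind2}.

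The main obstacle is verifying the hypotheses of Theorem~\ref{prop:finiteAsymptotics.updated} for the factors with $j \neq i$. Since $B_j < -1$, the corresponding $_1F_1$ polynomial is not real-rooted in general, so the three-term-recurrence plus Gershgorin argument used in the proof of Theorem~\ref{thm:StranfHyper2F1} (which implicitly relied on the positive-parameter Laguerre setting) has to be adapted. However, the derivation via Kummer's equation \eqref{odeLaguerre} of the quadratic relation $zG^2 + (B-z)G + 1 = 0$ for the Cauchy transform of the limit is insensitive to the sign of $B$ and to the reality of the zeros, so only uniform boundedness of the zero sets must be checked separately. This can be handled either by a direct coefficient estimate on the hypergeometric polynomial, or, more transparently, by interpreting the entire statement at the level of formal moment sequences as in Remark~\ref{rem:momentsequences} and the framework of the appendix; in either case the remainder of the argument transfers verbatim, and the resulting moment convergence is preserved under dilation and under $\boxplus_{n_i-1}$ in the limit $|\bm n|\to\infty$.
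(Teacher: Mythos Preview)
Your proposal is correct and follows essentially the same route as the paper: both start from the additive decomposition \eqref{decompTypeILaguerrekind1}, distribute the dilation across $\boxplus_{n_i-1}$, compute the limiting $R$-transform of each factor via the $_1F_1$ building block, and then sum. The only cosmetic difference is that the paper phrases the dilation of each $p_{i,j}$ as a multiplicative convolution with $(x-1/(c_i-c_j))^{n_i-1}$ (so the $S$-transform picks up the constant factor $c_i-c_j$) before invoking \eqref{RtransformStranform}, whereas you perform the change of variable first and then apply the dilation rule $\mathcal R_{cX}(w)=c\,\mathcal R_X(cw)$; the resulting $R$-transforms coincide. Your closing paragraph about the $j\neq i$ factors having $B_j<-1$ (hence possibly non-real zeros and no direct Gershgorin bound) is an honest observation that the paper itself glosses over by simply citing Theorem~\ref{thm:StranfHyper2F1}; the clean resolution is indeed the moment-sequence framework of Remark~\ref{rem:momentsequences} and the appendix, as you indicate.
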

\begin{proof}
By the representation \eqref{p.JP.Generalnew} and  \eqref{identityAdd3a}, it follows that
\begin{align*}
  \mathcal  L_{\bm{n},i}^{(\alpha_{\bm n},\bm{c}_{\bm n})}((n_i-1)x)&=
\dil{\frac{1}{n_i-1}}(p_{i,1}\boxplus_{n_i-1}p_{i,2}\boxplus_{n_i-1}\dots \boxplus_{n_i-1} p_{i,r})(x)\\
& = \left[(\dil{\frac{1}{n_i-1}} p_{i,1}) \boxplus_{n_i-1}(\dil{\frac{1}{n_i-1}} p_{i,2})\boxplus_{n_i-1}\dots \boxplus_{n_i-1} (\dil{\frac{1}{n_i-1}} p_{i,r})\right](x).
\end{align*}
From \eqref{identityMult2}, 
\begin{align*}
&\left[\dil{\frac{1}{n_i-1}} p_{i,j}\right](x)  =p_{i,j}((n_i-1) x) \\
& \simeq \begin{cases}
   \displaystyle    \HGF{1}{1}{-n_i+1}{-n_j-n_i+2}{(c_i-c_j)(n_i-1)x} , &  j\in \{1,\cdots,r\}\setminus \{i\},  \\[4mm]
   \displaystyle  \HGF{1}{1}{-n_i+1}{ \alpha_{\bm n} +1+|\bm n|-n_i}{c_i(n_i-1)x} , & j=i, 
       \end{cases}
\\
&\simeq \begin{cases}
   \displaystyle    \left(x-\frac{1}{c_i-c_j}\right)^{n_i-1} \boxtimes_{n_i-1} \HGF{1}{1}{-n_i+1}{-n_j-n_i+2}{(n_i-1)x} , &  j\in \{1,\cdots,r\}\setminus \{i\},  \\[4mm]
   \displaystyle   \left(x-\frac{1}{c_i}\right)^{n_i-1} \boxtimes_{n_i-1}\HGF{1}{1}{-n_i+1}{ \alpha_{\bm n} +1+|\bm n|-n_i}{(n_i-1)x} , & j=i.
       \end{cases} 
\end{align*}
Under assumptions \eqref{asymptMLIkind2assumptions}--\eqref{asymptMLIkind2assumptions2}, the normalized zero count measure of the first polynomial on the right-hand side tends to a Dirac delta (mass point) at $ 1/(c_i-c_j)$, if $j\neq i$, and at $1/c_i$ for $j=i$. Since the $S$-transform of $\delta_a$, $a\neq 0$, is the constant function $1/a$, applying Theorem \ref{thm:StranfHyper2F1}, we conclude that the weak-* limit $\nu_{i,j}$ of the normalized zero-counting measures of the scaled polynomials $p_{i,j}((n_i-1) x)$ is a positive probability measure, compactly supported on the real line, for which 
\begin{equation}
   \mathcal S_{\nu_{i,j}}(y)= \begin{cases} 
   \dfrac{c_i-c_j}{y- \theta_j/\theta_i},&  j\in \{1,\cdots,r\}\setminus \{i\},  \\[4mm] 
   \dfrac{c_i}{y+( A+1)/ \theta_i}, & j=i.
       \end{cases} 
\end{equation}
From \eqref{RtransformStranform} it follows that its $R$-transform is given by, 
$$
\mathcal R_{\nu_{i,j}}(y)=\begin{cases}\dfrac{-\theta_j}{\theta_i(c_i-c_j-y)},&  j\in \{1,\cdots,r\}\setminus \{i\},  \\[4mm] 
\dfrac{A+1}{\theta_i(c_i-y)}, & j=i.
       \end{cases} 
$$

Applying \eqref{def:additiveconvolutionmeasures}, we see that the normalized zero-counting measure of the rescaled $\mathcal  L_{\bm{n},i}^{(\alpha_{\bm n},\bm{c}_{\bm n})}$ converges in moments to a measure $\mu$, whose $R$-transform is
\begin{align*}
 \mathcal R_{\mu}(y)&=-\frac{A+1}{\theta_i(y-c_i)}+\sum_{\substack{1\leq j\leq r\\ j\neq i}} \frac{\theta_j}{\theta_i(y-c_i+c_j)}.
\end{align*}
Thus, its $K$-transform is 
$$
 \mathcal K_{\mu}(y)=\frac{1}{u}-\frac{A+1}{\theta_i(y-c_i)}+\sum_{\substack{1\leq j\leq r\\ j\neq i}} \frac{\theta_j}{\theta_i(y-c_i+c_j)},
$$
so that the Cauchy transform $y=\mathcal G_\mu(u)$ of $\mu$ is the solution of the algebraic equation
$$
 u=\frac{1}{y}-\frac{A+1}{\theta_i(y-c_i)}+\sum_{\substack{1\leq j\leq r\\ j\neq i}} \frac{\theta_j}{\theta_i(y-c_i+c_j)},
$$
as stated.
\end{proof}

%%%%%%%%%%%%%%%%%%%%%%%%%%%%%%%%%%%%%%%%%%%%%%%%%%%%%%
\section{Type II Multiple orthogonal polynomials} \label{sec:MOPtype2}
%%%%%%%%%%%%%%%%%%%%%%%%%%%%%%%%%%%%%%%%%%%%%%%%%%%%%%

Recall that Type II multiple orthogonal polynomial $P_{\bm{n}}$ are monic polynomial of degree $|\bm{n}|$ that satisfy 
the orthogonality conditions \eqref{typeII}, namely,
$$  
\int P_{\bm{n}}(x) x^k  w_j(x)\, dx = 0, \qquad  0 \leq k \leq n_j-1, \quad 1 \leq j \leq r.
 $$
 These polynomials are better studied than their Type I counterparts. In particular, it is known that their asymptotic zero distribution can be described via a solution of a vector equilibrium problem. Now we can perform this analysis using our free probability tools.

%%%%%%%%%%%%%%%%
\subsection{Type II Jacobi-Piñeiro polynomials}\ \label{subsec:JPTypeII}
%%%%%%%%%%%%%%%%

We consider again the AT-system of Jacobi-Piñeiro weights on $[0,1]$ that satisfy the conditions
\begin{equation}\label{assumptionsonalphasTypeII}
    \alpha_1,\dots,\alpha_r>-1  \quad \text{and} \quad   \alpha_i-\alpha_j\not\in\Z \text{ for } i\not=j. 
\end{equation}
 The Type II (monic) Jacobi-Piñeiro polynomials, $P^{(\bm \alpha;\beta)}_{\bm{n}}$, $\deg P^{(\bm \alpha;\beta)}_{\bm{n}}  = |\bm n|$,  are given by the orthogonality conditions  
\begin{equation}  \label{JPtypeII}
 \int_0^1        P_{\bm{n}}^{(\bm \alpha;\beta)}(x) \,  x^{\alpha_j+k}(1-x)^\beta  \, dx = 0, \qquad 0 \leq k \leq n_j-1, \quad j=1, \dots, r.
\end{equation}
Once again, these are hypergeometric polynomials (see \cite[\S 23.3.2]{MR2542683}):
\begin{equation} \label{JacobiPineiroDef}
\begin{split}
 (-1)^{|\bm{n}|} 
 \prod_{j=1}^r &
\frac{ \raising{|\bm{n}| +\alpha_j+\beta+1}{n_j}}{\raising{\alpha_j+1}{n_j}} \, (1-x)^\beta P_{\bm n}^{(\bm{a} ; \beta)}(x) =  
\HGF{r+1}{r}{
-|\bm{n}|-\beta, \bm \alpha+\bm n+1 }{
\bm \alpha+1}{x}.
\end{split}
\end{equation}

Taking $\beta\in\Z_{\geq 0}$  on \eqref{JacobiPineiroDef} and using Theorem \ref{thm:multiplicativeConvHG} from Section \ref{sec:finitefreeconv}, we obtain the following representation:
\begin{equation}\label{JacobiPineiro}
 (1-x)^\beta P_{\bm n}^{(\bm{a} ; \beta)}(x)\simeq p_{1}(x) \boxtimes_{|\bm n|+\beta} \dots \boxtimes_{|\bm n|+\beta} p_{r}(x),
\end{equation}
where
\begin{equation} \label{pPolyns1}
\begin{split}
p_{j}(x)&= \frac{\raising{\alpha_j+1}{n_j}}{(n_j)!}\, \HGF{2}{1}{-|\bm{n}|-\beta , \alpha_j+n_j+1}{\alpha_j+1}{x}   \\
&= (1-x)^{|\bm n|+\beta-n_j} \,\HGF{2}{1}{-n_j,|\bm n|+\beta+\alpha_j+1}{\alpha_j+1}{x}
 , \quad j=1, \dots, r. 
\end{split}
\end{equation}
These identities allow us to follow the methodology described above to find the asymptotic zero distribution of these polynomials. To deal with the case of noninteger $\beta$, we appeal to an alternative expression for the reciprocals of the Type II Jacobi-Piñeiro polynomials, consequence of Lemma \ref{lemma:trick}:
\begin{proposition} \label{propReprJPTypeII}
For the Type II Jacobi-Piñeiro polynomials,
\begin{equation} \label{representationReversedJPII}
    \begin{split}
    P_{\bm n}^{(\bm{a} ; \beta)}(x) \simeq \, p^*(x) 
\boxtimes_{|\bm n|} \left[\HGF{1}{1}{-|\bm n|}{-\beta-|\bm n|+1}{ |\bm n|x} \boxplus_{|\bm n|}\left(
\HGF{1}{1}{-|\bm n|}{\beta+1}{|\bm n|x}\boxtimes_{|\bm n|} q\right)\right]^*,
    \end{split}
\end{equation}
where
\begin{equation}
    \label{defQinProp51}
p(x)= \HGF{2}{0}{-|\bm n|,1}{\cdot }{-\frac{x}{|\bm n|}} , \quad q(x)=\HGF{r+1}{r}{-|\bm n|,\ -|\bm n|-\bm \alpha}{-|\bm n|-\bm n-\bm \alpha}{-x}.
\end{equation}
\end{proposition}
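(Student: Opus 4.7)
My plan is to apply Lemma~\ref{lemma:trick} to the bracket on the right-hand side of \eqref{representationReversedJPII}, after first simplifying it to a pure additive convolution of two hypergeometric polynomials, and then to use the hypergeometric representation \eqref{JacobiPineiroDef} to identify the outcome with $P_{\bm n}^{(\bm a;\beta)}$.

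The first step is to rewrite the bracket in the form $\dil{1/|\bm n|}V(x)$ with
$$V(x) \isdef \HGF{1}{1}{-|\bm n|}{-\beta-|\bm n|+1}{x} \boxplus_{|\bm n|} \HGF{r+1}{r+1}{-|\bm n|,\, -|\bm n|-\bm\alpha}{\beta+1,\, -|\bm n|-\bm n-\bm\alpha}{-x},$$
which follows by combining Theorem~\ref{thm:multiplicativeConvHG} (to absorb $q$ into the ${}_1F_1$ Laguerre factor, producing a single ${}_{r+1}F_{r+1}$) with the dilation identities \eqref{identityMult2}--\eqref{identityMult3b} and \eqref{identityAdd3a} to factor out the common scaling by $1/|\bm n|$ and the sign arising from the argument $-x$ of $q$.

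The second step is to apply Lemma~\ref{lemma:trick} to $V$. Matching the two factors of $V$ against the additive convolution on the right-hand side of the lemma fixes $i_1=0,\, j_1=1,\, \bm a_1=(),\, \bm b_1=-\beta-|\bm n|+1,\, l_1=0$ and $i_2=r,\, j_2=r+1,\, \bm a_2=-|\bm n|-\bm\alpha,\, \bm b_2=(\beta+1,\, -|\bm n|-\bm n-\bm\alpha),\, l_2=1$. The lemma then identifies $\HGF{2}{0}{-|\bm n|,1}{\cdot}{x}\boxtimes_{|\bm n|}V \simeq P_L^*$, where the hypothesis polynomial is
$$P_L(x) \simeq (1-x)^{-\beta}\,\HGF{r+1}{r}{-\beta-|\bm n|,\, \bm n+\bm\alpha+1}{\bm\alpha+1}{-x}.$$
By \eqref{JacobiPineiroDef} at $y=-x$, the second factor equals a constant multiple of $(1+x)^\beta P_{\bm n}^{(\bm a;\beta)}(-x)$, so formally $P_L \simeq \left(\tfrac{1+x}{1-x}\right)^\beta P_{\bm n}^{(\bm a;\beta)}(-x)$. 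Writing $p(x)\simeq \dil{-|\bm n|}\HGF{2}{0}{-|\bm n|,1}{\cdot}{x}$ and using the compatibility identities $(\dil{\alpha}f)^*=\dil{1/\alpha}f^*$, $(\dil{\alpha}f)\boxtimes_n g = \dil{\alpha}(f\boxtimes_n g)$, and $f^*\boxtimes_n g^*\simeq (f\boxtimes_n g)^*$ then collapses $p^*\boxtimes_{|\bm n|}[\text{bracket}]^*$, up to a nonzero constant, to $P_{\bm n}^{(\bm a;\beta)}(x)$.

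The main obstacle is that $(1-x)^{-\beta}$ is a formal power series, not a polynomial, when $\beta\notin\Z_{\le 0}$, so Lemma~\ref{lemma:trick} does not literally apply. I would overcome this in one of two ways: either by noting that the key identity $\HGF{2}{0}{-n,1}{\cdot}{x}\boxtimes_n q(\mathfrak d)[x^n] \simeq q^*$ in the proof of that lemma depends only on the first $|\bm n|+1$ Taylor coefficients of $q$ and therefore extends verbatim to formal series (with $q^*$ read as the reciprocal of the polynomial truncation of $q$ at degree $|\bm n|$), or by analytic continuation in $\beta$ from a subset of $\Z_{\le 0}$ where the relevant hypergeometric factors are polynomials; both sides of \eqref{representationReversedJPII} are polynomials in $x$ whose coefficients are meromorphic in $\beta$, so identity on any set with an accumulation point suffices.
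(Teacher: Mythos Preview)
Your approach and the paper's are built from the same ingredients (Lemma~\ref{lemma:trick}, Theorem~\ref{thm:multiplicativeConvHG}, and the dilation identities), but you run them in the opposite direction: you start from the right-hand side of \eqref{representationReversedJPII} and try to recognize the left-hand side, whereas the paper starts from $P_{\bm n}^{(\bm a;\beta)}$ and derives the right-hand side.

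That reversal is exactly what creates the obstacle you then try to work around. Lemma~\ref{lemma:trick} requires only that the \emph{product} $p=F_1F_2$ be a polynomial of degree $n$; the individual hypergeometric factors need not be polynomials. The paper exploits this directly: from \eqref{JacobiPineiroDef} one has
\[
P_{\bm n}^{(\bm a;\beta)}(x)\;\simeq\;\HGF{1}{0}{\beta}{\cdot}{x}\;\HGF{r+1}{r}{-|\bm n|-\beta,\;\bm\alpha+\bm n+1}{\bm\alpha+1}{x},
\]
a product of two non-polynomial hypergeometric series whose product is the degree-$|\bm n|$ polynomial $P_{\bm n}^{(\bm a;\beta)}$ for every $\beta>-1$. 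So the hypothesis of Lemma~\ref{lemma:trick} is satisfied outright, with no restriction on $\beta$ and no need for a formal-power-series extension or analytic continuation. What remains in the paper is purely cosmetic: insert the $\boxtimes_{|\bm n|}$-identity $(x-1)^{|\bm n|}=(x+|\bm n|)^{|\bm n|}\boxtimes_{|\bm n|}(x+|\bm n|^{-1})^{|\bm n|}$ and distribute the two dilation factors between the ${}_2F_0$ and the additive convolution to reach \eqref{representationReversedJPII}.

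In your direction, by contrast, you read off $(F_1,F_2)$ from the additive-convolution factors and land on $P_L(x)=(1-x)^{-\beta}\,\HGF{r+1}{r}{\ldots}{\ldots}{-x}$, which is not a polynomial, so the lemma does not apply and you are forced into a detour. The very appearance of the spurious factor $\bigl(\tfrac{1+x}{1-x}\bigr)^\beta$ is a warning sign that the reconstruction is not landing on $P_{\bm n}^{(\bm a;\beta)}$ itself; compare with the paper's factorization at argument $x$, where the two non-polynomial factors cancel exactly. Your proposed fixes are plausible, but neither is needed: simply apply Lemma~\ref{lemma:trick} forward to $P_{\bm n}^{(\bm a;\beta)}$, and the issue never arises.
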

\begin{proof}
By  \eqref{JacobiPineiroDef},  and using the hypergeometric representation of $(1-x)^\beta$, we have that
$$
P_{\bm n}^{(\bm{a} ; \beta)}(x) \simeq \HGF{1}{0}{\beta}{\cdot}{x}
\HGF{r+1}{r}{ -|\bm{n}|-\beta, \alpha_1+n_1+1, \ldots, \alpha_r+n_r+1 }{
\alpha_1+1, \ldots, \alpha_r+1}{x}. 
$$
Notice that $ P_{\bm{n}}^{(\bm \alpha;\beta)}(0)\neq 0$, so its reciprocals has the same degree. 
Applying Lemma \ref{lemma:trick}, we get that
\begin{align*}
\left(P_{\bm n}^{(\bm{a} ; \beta)}\right)^*(x) & \simeq \HGF{2}{0}{-|\bm n|,1}{\cdot}{x}
\\& \boxtimes_{|\bm n|}
\left[\HGF{1}{1}{-|\bm n|}{-\beta-|\bm n|+1}{ -x} \boxplus_{|\bm n|}
\HGF{r+1}{r+1}{-|\bm n|,\ -|\bm n|-\bm \alpha}{\beta+1,-|\bm n|-\bm n-\bm \alpha}{-x}\right].
\end{align*}
Using that $(x-1)^{|\bm n|} = (x+|\bm n|)^{|\bm n|} \boxtimes_{|\bm n|}(x+|\bm n|^{-1})^{|\bm n|} $ is an identity under the finite free multiplicative convolution (see \eqref{identityMult2}), we can rewrite $\left(P_{\bm n}^{(\bm{a} ; \beta)}\right)^*$ as the multiplicative convolution of two polynomials,
$$
p(x)=\HGF{2}{0}{-|\bm n|,1}{\cdot}{x}\boxtimes_{|\bm n|} (x+|\bm n|)^{|\bm n|}\simeq  
\HGF{2}{0}{-|\bm n|,1}{ \cdot }{-|\bm n|^{-1}x}$$ 
and
\begin{align*}
\widetilde p (x) & =(x+|\bm n|^{-1})^{|\bm n|} \boxtimes_{|\bm n|}\left[\HGF{1}{1}{-|\bm n|}{-\beta-|\bm n|+1}{ -x} \boxplus_{|\bm n|}
\HGF{r+1}{r+1}{-|\bm n|,\ -|\bm n|-\bm \alpha}{\beta+1,-|\bm n|-\bm n-\bm \alpha}{-x}\right]\\
& =  \HGF{1}{1}{-|\bm n|}{-\beta-|\bm n|+1}{ |\bm n| x } \boxplus_{|\bm n|}
\HGF{r+1}{r+1}{-|\bm n|,\ -|\bm n|-\bm \alpha}{\beta+1,-|\bm n|-\bm n-\bm \alpha}{ |\bm n| x} \\
& = \HGF{1}{1}{-|\bm n|}{-\beta-|\bm n|+1}{ |\bm n| x } \boxplus_{|\bm n|} \left[ \HGF{1}{1}{-|\bm n|}{\beta+1}{|\bm n|x}\boxtimes_{|\bm n|} q(x)\right],
\end{align*}
where $q$ was defined in \eqref{defQinProp51}.

It remains to observe that by the definition \eqref{coeffMultConv}, the operation $^*$ acts distributively on the multiplicative convolution $\boxtimes_{|\bm n|}$.
\end{proof}

\subsubsection{Real zeros: monotonicity and interlacing}

\begin{theorem}\label{interlacingJP}
    Let a multi-index $\bm n\in\N^{r}$, $\beta\in\Z_{\geq 0}$ and  $\bm \alpha=(\alpha_1,\dots,\alpha_r)$ satisfy \eqref{assumptionsonalphasTypeII}. For each $i\in\{1,\dots,r\}$ and  $0\le t \leq 2$  such that 
     \begin{equation}\label{conditioninterlacing}
         \alpha_i-\alpha_j+t\not \in\Z, \quad j\not=i,
     \end{equation}
     the following interlacing holds:
\begin{equation}
          P_{\bm n+\bm e_i}^{(\bm \alpha, \beta)}(x)\preccurlyeq P_{\bm n}^{(\bm \alpha , \beta)}\preccurlyeq P_{\bm n}^{(\bm \alpha+t \bm e_i, \beta)}          .
\end{equation}
\end{theorem}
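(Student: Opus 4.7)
The plan is to reduce both interlacing statements to interlacings among the $_2F_1$ building blocks of the factorization \eqref{JacobiPineiro}--\eqref{pPolyns1} and to propagate them through multiplicative convolution via Proposition \ref{lem:preservinginterlacingMult}. Since $\beta\in\Z_{\geq 0}$ by hypothesis, I may use
$$
(1-x)^\beta P_{\bm n}^{(\bm\alpha;\beta)}(x)\simeq p_1\boxtimes_{|\bm n|+\beta}\cdots\boxtimes_{|\bm n|+\beta} p_r,\qquad p_j(x)=(1-x)^{|\bm n|+\beta-n_j}\HGF{2}{1}{-n_j,|\bm n|+\beta+\alpha_j+1}{\alpha_j+1}{x}.
$$
By the Jacobi-polynomial connection \eqref{HPJacobiPfla1}, each $p_j$ lies in $\P_{|\bm n|+\beta}(\R_{\geq 0})$. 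Since the AT-system property ensures that all zeros of $P_{\bm n}^{(\bm\alpha;\beta)}$ lie in $(0,1)$ and the factor $(1-x)^\beta$ contributes only zeros at $x=1$, cancelling this boundary factor at the end of the argument will preserve all interlacings inside $(0,1)$.

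For the right-hand inequality $P_{\bm n}^{(\bm\alpha,\beta)}\preccurlyeq P_{\bm n}^{(\bm\alpha+t\bm e_i,\beta)}$, the shift $\alpha_i\mapsto\alpha_i+t$ affects only $p_i$, and condition \eqref{conditioninterlacing} guarantees that the shifted $_2F_1$ factor remains of full degree. Appealing to the interlacing result for $_2F_1$ polynomials under the simultaneous shift of the second numerator and the denominator parameter (in the spirit of \cite[Eq.~(61)]{martinez2023real}, which was already used in the proof of Theorem~\ref{Properties.JP}), one obtains $p_i^{(\bm\alpha,\beta)}\preccurlyeq p_i^{(\bm\alpha+t\bm e_i,\beta)}$. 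The remaining $p_j$ with $j\neq i$ are unchanged and sit in $\P(\R_{\geq 0})$, so Proposition \ref{lem:preservinginterlacingMult} propagates the interlacing through the convolution; cancelling the common factor $(1-x)^\beta$ yields the claim.

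For the left-hand inequality $P_{\bm n+\bm e_i}^{(\bm\alpha,\beta)}\preccurlyeq P_{\bm n}^{(\bm\alpha,\beta)}$ the subtlety is that the natural convolution levels $|\bm n|+\beta$ and $|\bm n|+\beta+1$ do not match. The plan is to lift the $P_{\bm n}$-side to the common level $|\bm n|+\beta+1$ by an additional $(1-x)$ factor, comparing the degree-$(|\bm n|+\beta+1)$ polynomials $(1-x)^{\beta+1}P_{\bm n}^{(\bm\alpha,\beta)}$ and $(1-x)^{\beta}P_{\bm n+\bm e_i}^{(\bm\alpha,\beta)}$. Expanding both in \eqref{JacobiPineiro} at that common level, the building blocks agree for $j\neq i$, while in the $i$-th slot they differ by the substitution $-n_i\mapsto -n_i-1$ and the corresponding shift of the second numerator parameter by one. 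The required interlacing of these two particular $_2F_1$ factors can then be established via a Rolle-type argument based on the derivative identity \eqref{derivativeHyper}; Proposition \ref{lem:preservinginterlacingMult} propagates it, and cancelling the boundary factors on both sides produces the asserted interlacing. The main obstacle is precisely this level-matching step: one must verify that multiplication by $(1-x)$ is indeed compatible with the level-$(|\bm n|+\beta+1)$ convolution representation of $(1-x)^\beta P_{\bm n}^{(\bm\alpha,\beta)}$, and that the derivative-type interlacing of the affected $_2F_1$ polynomials holds in the correct (non-strict) sense under hypothesis \eqref{conditioninterlacing}; all other ingredients parallel the argument of Theorem~\ref{Properties.JP}.
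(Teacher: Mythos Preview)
For the right inequality $P_{\bm n}^{(\bm\alpha,\beta)}\preccurlyeq P_{\bm n}^{(\bm\alpha+t\bm e_i,\beta)}$ your plan coincides with the paper's: only the $i$-th block changes, the remaining convolution $q_i:=\boxtimes_{j\neq i}\,p_j$ lies in $\P_{|\bm n|+\beta}((0,1))$, and the ${}_2F_1$ interlacing (the paper invokes \cite[Theorem~2.2]{driver2008interlacing} for Jacobi polynomials rather than \cite[Eq.~(61)]{martinez2023real}) is propagated through $\boxtimes_{|\bm n|+\beta}$ via Proposition~\ref{lem:preservinginterlacingMult}, after which the common factor $(1-x)^\beta$ is cancelled.

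For the left inequality your level-lifting idea has a genuine gap. Identity \eqref{JacobiPineiro} yields a convolution representation, at level $|\bm m|+\beta'$, of $(1-x)^{\beta'}P_{\bm m}^{(\bm\alpha,\beta')}$ and of nothing else; it provides no representation of $(1-x)^{\beta+1}P_{\bm n}^{(\bm\alpha,\beta)}$ at level $|\bm n|+\beta+1$. Concretely, if at level $|\bm n|+\beta+1$ you take the $j\neq i$ blocks equal to those of $(1-x)^\beta P_{\bm n+\bm e_i}^{(\bm\alpha,\beta)}$, namely ${}_2F_1(-|\bm n|-\beta-1,\alpha_j+n_j+1;\alpha_j+1;x)$, and in the $i$-th slot put the ``degree-$n_i$'' block ${}_2F_1(-|\bm n|-\beta-1,\alpha_i+n_i+1;\alpha_i+1;x)$, then by Theorem~\ref{thm:multiplicativeConvHG} the resulting convolution is ${}_{r+1}F_r(-|\bm n|-\beta-1,\bm\alpha+\bm n+1;\bm\alpha+1;x)\simeq(1-x)^{\beta+1}P_{\bm n}^{(\bm\alpha,\beta+1)}$, \emph{not} $(1-x)^{\beta+1}P_{\bm n}^{(\bm\alpha,\beta)}$. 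So your assertion ``the building blocks agree for $j\neq i$'' sets up a comparison with the wrong polynomial (wrong $\beta$), and no Rolle-type argument in the $i$-th slot alone can repair this.

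The paper does not lift to level $|\bm n|+\beta+1$; it stays at level $N=|\bm n|+\beta$ throughout. It sets $Q_m^{(\alpha_i,\beta)}:={}_2F_1(-m,N+\alpha_i+1;\alpha_i+1;x)$, uses the Jacobi interlacing $Q_{n_i+1}^{(\alpha_i,\beta)}\prec Q_{n_i}^{(\alpha_i,\beta)}$ from \cite[Theorem~2.2]{driver2008interlacing}, upgrades it to the equal-degree interlacing $(1-x)^{N-n_i-1}Q_{n_i+1}^{(\alpha_i,\beta)}\preccurlyeq(1-x)^{N-n_i}Q_{n_i}^{(\alpha_i,\beta)}$ at level $N$, and then convolves both sides with the \emph{same} $q_i$ at that level before cancelling boundary factors.
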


Recall that $\bm e_i\in \N ^r$ is the multi-index whose only non-zero entry (equal to $1$) is in the position $i$. 

\begin{proof}
Take $i\in\{1,\dots,r\}$ and define the following polynomial, 
\begin{equation}\label{Qpolynomial}
     Q_{n_i}^{(\alpha_i,\beta)}(x)=\HGF{2}{1}{-n_i,|\bm n|+\beta+\alpha_i+1}{\alpha_i+1}{x}
\end{equation}
and using the decomposition \eqref{pPolyns1} of $ P_{\bm n}^{(\bm \alpha, \beta)}$ define the following polynomial,
\begin{equation}\label{qpolynomialJP}
    q_i(x):=p_{1}\boxtimes_{|\bm n|+\beta}\cdots\boxtimes_{|\bm n|+\beta}  p_{i-1}\boxtimes_{|\bm n|+\beta} p_{i+1} \boxtimes_{|\bm n|+\beta} \cdots\boxtimes_{|\bm n|+\beta} p_{r}
\end{equation}
 By hypothesis, we have, $\alpha_j, \beta>-1$ and  $\bm n\in \N^r$, thus
    \begin{equation*}
        \alpha_j+\beta+|\bm n|>\alpha_j+n_j-1, \qquad j\in\{1,\dots,r\}
    \end{equation*}
then we have by \cite[Theorem 1, \textit{(ii)}]{dominici2013real} that $Q_{n_i}^{\alpha_i,\beta},p_{j}\in\P_{|\bm n|+\beta}((0,1))$. Applying Proposition \ref{prop:realrootedness} to polynomials $q_i$ defined in \eqref{qpolynomialJP}, we get that $q_i\in\P_{|\bm n|+\beta}((0,1))$. 

Using the the interpretation of Jacobi Polynomials as $_2F_1$, we can extract from \cite[Theorem 2.2]{driver2008interlacing} that for $0<t \leq 2$,
\begin{equation*}
    Q_{n_i+1}^{\alpha_i,\beta}\prec Q_{ n_i}^{\alpha_i,\beta} \prec Q_{n_i}^{\alpha_i+t,\beta}
\end{equation*}
Using Proposition \ref{lem:preservinginterlacingMult}, we get that
\begin{equation*} 
\begin{split}
       (1-x)^{|\bm n|+\beta-n_i-1}  Q_{ n_i+1}^{\alpha_i,\beta}\boxtimes_{|\bm n|+\beta}q_i&\preccurlyeq  (1-x)^{|\bm n|+\beta-n_i} Q_{n_i}^{\alpha_i,\beta}\boxtimes_{|\bm n|+\beta}q_i\\ &\preccurlyeq  (1-x)^{|\bm n|+\beta-n_i} Q_{n_i}^{\alpha_i+t,\beta}\boxtimes_{|\bm n|+\beta}q_i.
\end{split}       
\end{equation*}
 
For $0<t\leq 2$ satisfying \eqref{conditioninterlacing}, we conclude that 
$$
       P_{\bm n+\bm e_i}^{(\bm \alpha, \beta)}(x)\preccurlyeq P_{\bm n}^{(\bm \alpha , \beta)}\preccurlyeq P_{\bm n}^{(\bm \alpha+t\bm e_i, \beta)}.
$$
\end{proof}
\begin{remark}
   From \cite[Theorem 2.2]{driver2008interlacing}, the polynomial defined in \eqref{Qpolynomial} satisfies
   \begin{equation*}
    Q_{n_i+1}^{\alpha_i,\beta}\prec Q_{ n_i}^{\alpha_i,\beta+s} \prec Q_{n_i}^{\alpha_i,\beta}  \quad \text{ for }  0\leq s\leq2   .
\end{equation*} 
Using similar ideas from the proof of Theorem \ref{interlacingJP}, we can obtain
\begin{equation}\label{Interlacingbeta}
          P_{\bm n+\bm e_i}^{(\bm \alpha, \beta)}(x)\preccurlyeq P_{\bm n}^{(\bm \alpha , \beta+s)}\preccurlyeq P_{\bm n}^{(\bm \alpha, \beta)}   \quad \text{ and } \quad s=0,1,2       .
\end{equation}
The restriction of $s$ to entire numbers is necessary in order to use \eqref{JacobiPineiro}.
The interlacing stated in Theorem \ref{interlacingJP} and in \eqref{Interlacingbeta} was partially proved in \cite[Theorem 2.2]{haneczok2012interlacing}, where it was shown that 
$$
 P_{\bm n+\bm e_i}^{(\bm \alpha, \beta)}(x)\preccurlyeq P_{\bm n}^{(\bm \alpha, \beta)}.
$$
\end{remark}

\subsubsection{Zero asymptotics of Type II Jacobi-Piñeiro polynomials}\

We can describe the weak zero asymptotics of a sequence of Type II Jacobi-Pi\~neiro polynomials, 
$$
p_{\bm n }=  P_{\bm n}^{(\bm{a} ; \beta)}(x),
$$
where $\bm \alpha_{\bm n} = \left(\alpha_1^{(\bm n)}, \dots, \alpha_r^{(\bm n)} \right)$, $\bm n =(n_1, \dots, n_r)\in \N^r$, under assumption \eqref{assumptionsonalphasTypeII}, that is,
\begin{equation}
    \label{asymptJPIIassumptions}
    \lim_{|\bm n|\to \infty} \frac{\alpha_i^{(\bm n)}}{|\bm n|}=A_i\ge 0, \quad \lim_{|\bm n|\to \infty} \frac{\beta^{(\bm n)}}{|\bm n|}=B\ge 0,  \quad \text{and} \quad \lim_{|\bm n|\to \infty} \frac{n_i}{|\bm n|}=\theta_i>0, \qquad i=1, \dots, r.
\end{equation}

Let us suppose initially that for all $\bm n$,
\begin{equation}
    \label{integerBeta}
\beta^{(\bm n)} \in \Z_{\ge 0}.
\end{equation}
Representation \eqref{JacobiPineiroDef} indicates that we need to consider the parameters 
\begin{equation}
    \label{defABfrakII}
\mathfrak a_j \isdef   (A_j+\theta_j)/(1+B),  \quad \text{and}\quad \mathfrak b_j \isdef  
     A_j   /(1+B),  \quad j=1, \dots, r.
\end{equation}
Restrictions $\mathfrak a_j \notin [-1,0)$ and $\mathfrak b_j\neq -1$ and the assumption that $\mathfrak a_i \neq \mathfrak b_j$ for $i\neq j$ hold automatically.

Let $\mu$ be a weak-* accumulation point $\mu$ of the normalized zero-counting measures $\chi(p_{\bm n, i})$, and denote
$$
\widetilde\mu = \frac{1}{1+B}\left( B \delta_0 +\mu \right).
$$
By Theorem~\ref{thm:StranfHyper2F1}, the $S$-transform of $\widetilde\mu$ is   
\begin{equation}
    \label{SfunctionForJPII}
   \mathcal S_{\widetilde\mu}(z)= \prod_{j=1}^r  \frac{ z+\mathfrak a_j^{(i)}+1 }{ z+\mathfrak b_j^{(i)}+1  } = \prod_{j=1}^r  \frac{ (1+B) z+  \theta_j+ A_j  +B+1 }{ (1+B) z+A_j  +B+1 } ,
\end{equation}
while, by Theorem \ref{cor:CauchyTransf}, 
$$
y = u \, \mathcal G_{\widetilde\mu}(u)= \frac{u}{1+B}\left( \frac{B}{u} + \mathcal G_\mu(u) \right)= \frac{1}{1+B}\left(  B  + u \, \mathcal G_\mu(u) \right)
$$
is a solution of 
$$  
    y \prod_{j=1}^r \left(y +\frac{A_j}{1+B} \right) = u (y-1)   \prod_{j=1}^r  \left(y +\frac{A_j+\theta_j}{1+B} \right).
$$  

Let us discuss the case when $\bm \alpha$ and $\beta$ do not depend on $\bm n$, so that $A=B=0$ and $\widetilde\mu =\mu$. In this case,
$$
  \mathcal S_\mu(z)=  \frac{1}{ (z+1)^r}  \,  \prod_{j=1}^r   \left(   z+  \theta_j +1 \right),
$$
and $y = u \, \mathcal G_\nu(u)$ is a solution of 
$$
y^{r+1} = u (y-1)   \prod_{j=1}^r  \left(y + \theta_j  \right).
$$
In the asymptotically diagonal situation, when all $\theta_j=1/r$, this equation boils down to 
$$
y^{r+1} = u (y-1)     \left(y + 1/r  \right)^r
$$
(compare it to \eqref{eqCauchyTrTypeIJPdiagonal}; these equations can be reduced to each other with the change $y\mapsto -y/r$; an equivalent equation appeared in \cite{MR3471160}). By the Sokhotski-Plemelj Formula, the density $\mu'$ of $\mu$ on $(0,1)$ can be recovered as
$$
\mu'(x)=\frac{1}{2\pi i x}  \left(y_- ( x)-y_+ ( x) \right) =- \frac{1}{ \pi x } \Im \left(y_+ ( x) \right)  , \quad x\in (0,1).
$$
\begin{example}\label{exampleJPII}
If $r=2$, and $\theta_1 = \theta\le 1/2$, the Cauchy transform $\mathcal G_\mu(u)$ of the limit measure $\mu$ is given by $y/u$, where $y=y(u)$ solves the equation
\begin{equation}
    \label{cubic2}
y^3 = u (y-1)    \left(y + \theta   \right)  \left(y +1- \theta   \right) = u \left( y^3- \left(1+ \nu  \right)   y+ \nu \right) , \quad \nu = \theta (\theta-1) \in (-1/4,0).
\end{equation}
This is the same cubic equation \eqref{cubic} we obtained in Example \ref{example44}, up to the modification $\theta \mapsto 1/\theta$, and the corresponding change of the value of $\nu$. Thus, we can use our previous calculations, selecting the branch such that $y(u)>0$ for $u<0$.  
Namely, if now 
$$
f(u)=\left(\frac{\theta (\theta-1)u}{2(1-u)}  \right)^{1/3}
$$
is taken analytic in $\C\setminus\left[0, +\infty\right)$, whose single-valued branch is fixed by requiring $f(x)>0$ for $x<0$,  then the desired solution of \eqref{cubic2} is
$$
y(u)=   f(u) \left( \left(1+\sqrt{1+  \frac{\kappa \,u}{1-u}}  \right)^{1/3} +\left(1-\sqrt{1+  \frac{\kappa \,u}{1-u}}  \right)^{1/3} \right) ,
$$
where
$$
\kappa =\kappa(\nu)=\frac{4}{27}\, \frac{(1+\nu)^3}{\nu^2} = \frac{4 (1+(\theta -1) \theta )^3}{27\theta ^2
   (1-\theta )^2 }\ge1
$$
and with the branches of the roots taken always positive for $u\in (-\varepsilon, 0)$ for a sufficiently small $\varepsilon>0$. Notice that $\kappa(\nu)$ is a strictly increasing function on $(-1/4,0))$, with $\kappa(-1/4)=1$.

For $u=x>0$, denote by $y_\pm$ the boundary values of $y$ on $(0,1)$. As before, 
the Sokhotski-Plemelj Formula gives us the density of the limit measure $\mu$ as
\begin{align*}
    \mu'(x) & =\frac{1}{2\pi i x} \left(y_- (x)-y_+ (x) \right) = -\frac{1}{ \pi x } \Im \left(y_+ (x) \right)  .
\end{align*}
We have that
$$
\left(\sqrt{1+  \frac{\kappa \,u}{1-u}}+1 \right)^{1/3}_+ \bigg|_{u=x} =  \left(\sqrt{1+ \frac{\kappa \, x}{1-x}}+1 \right)^{1/3} >0 \text{ for } x\in (0,1),
$$
and, taking into account that $1-\sqrt{1+u}=-\frac{u}{2}\left(1+\mathcal O(u)\right)$ as $u\to 0$, 
$$
\left(1-\sqrt{1+  \frac{\kappa \,u}{1-u}}  \right)^{1/3}_+ \bigg|_{u=x} =  \left(  \sqrt{1+ \frac{\kappa \, x}{1-x}}  -1\right)^{1/3}e^{-\pi i /3} \text{ for } x\in (0,1 ).
$$
Since
$$
f_+(x)=\sqrt[3]{\frac{-\nu  x}{2(1-x)}}\, e^{-\pi i /3}, \quad x\in(0,1),
$$
we get that on $(0,1)$,
$$
y_+(x)= \sqrt[3]{\frac{-\nu  x}{2(1-x)}}\, e^{-\pi i /3} \left( \left(\sqrt{1+ \frac{\kappa \, x}{1-x}}+1 \right)^{1/3} + \left(  \sqrt{1+ \frac{\kappa \, x}{1-x}}  -1\right)^{1/3}e^{-\pi i /3} \right) .
$$
Putting all together, we conclude that $\mu$ is supported on $[0,1]$, with
\begin{equation}
    \label{densityJPII}
    \begin{split}
     \mu'(x)   & =  \frac{\sqrt{3}}{2\pi }\, \sqrt[3]{\frac{\theta(1-\theta)}{2}}\,  
     \,\frac{ \left( \sqrt{1+(\kappa -1)x}+ \sqrt{1-x}  \right)^{1/3} + \left( \sqrt{1+(\kappa -1)x}- \sqrt{1-x}  \right)^{1/3}  }{x^{2/3} \sqrt{1-x}}.
     \end{split}
\end{equation}
Symbolic integration allows us to check that $\mu'(x)$ is the density of a positive probability measure on $[0,1]$, and that 
$$
\mu'(x)=\frac{\sqrt{3}
   \sqrt[3]{ \theta(1-\theta ) )}}{2 \pi \,  x^{2/3}}(1+o(1)) \text{ as } x\to 0+, \quad \text{and} \quad \mu'(x)=\frac{\sqrt{1-\theta(1-\theta) }
  }{\pi  \sqrt{1-x}}  (1+o(1)) \text{ as } x\to 1-.
$$

See Figure \ref{fig:3} for a histogram of the zeros for $\theta=1/3$.

In the diagonal case, when $\theta=1/2$ (and $\kappa=1$), this formula coincides with the expression 
\begin{equation}
    \label{densityJPIIDiag}
     \mu'(x)  =  \frac{\sqrt{3}}{4\pi }\,  
     \,\frac{ \left( 1+ \sqrt{1-x}  \right)^{1/3} + \left( 1- \sqrt{1-x}  \right)^{1/3}  }{x^{2/3} \sqrt{1-x}},
\end{equation}
found in \cite{MR3471160}, see also \cite[\S 8.4]{MR4560438}. On the other hand, as $\theta \to 0+$ or $\theta \to 1-$, $\mu$ converges to the equilibrium measure (arcsine distribution) on $[0,1]$. 
\begin{figure}
    \centering
    \includegraphics[width=.6\textwidth]{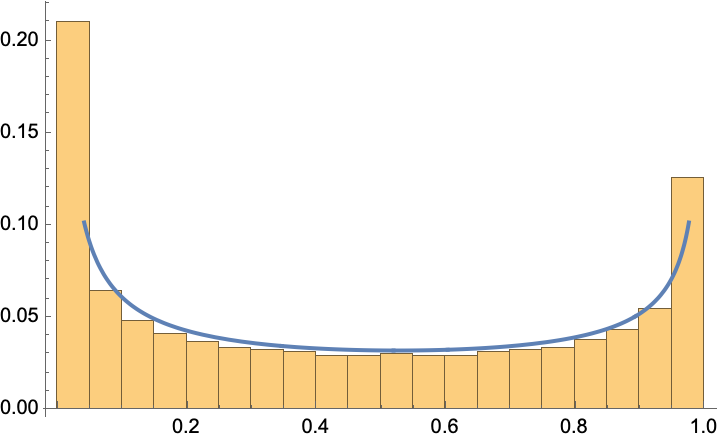}
    \caption{Histogram of the zeros of the polynomial $P^{(\bm \alpha ;\beta )}_{\bm{n}}$, with $\bm \alpha=(1/2, 3/7)$, $\beta=1$, and $\bm n = (300, 600)$, along with the corresponding density. }
    \label{fig:3}
\end{figure}
\end{example}

\begin{example}\label{exampleJPII2}
Still in the case of $r=2$ and asymptotically diagonal multi-indices ($\theta_1=\theta_2=1/2$), similar ideas allow us to tackle the case of varying parameters $\bm \alpha_{\bm n}$ and $\beta^{(\bm n)}$, satisfying \eqref{asymptJPIIassumptions}. We avoid performing cumbersome calculations here. Instead, we point out that if the case of $B=0$ and $A_1=A_2=A\ge 0$, the support of the limit measure $\mu$ is $[a^*,1]$, with
\begin{equation}
    \label{curveA}
a^* = \frac{A^3(A+1)}{\left(A+\frac{3}{2} \right)^3 \left(A+\frac{1}{2} \right)} \in [0,1).
\end{equation}
On the other hand, if $A_1=A_2=0$ and $B\ge 0$, the measure is supported on $[0,b^*]$, with
\begin{equation}
    \label{curveB}
b^* = \frac{27 (B+1)^2}{\left( 2B+3 \right)^3 } \in (0,1].
\end{equation}
See Figure~\ref{fig:5} for the result of some numerical experiments.
\begin{figure}
    \centering
    \includegraphics[width=.5\textwidth]{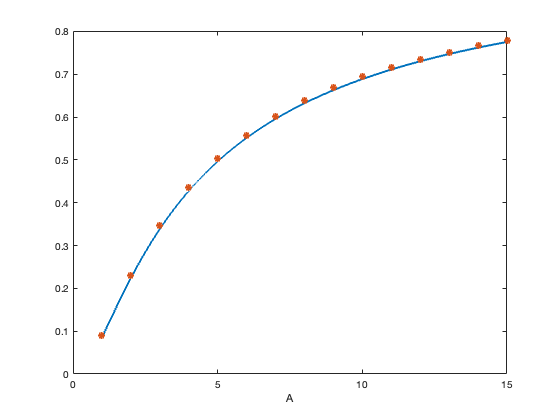}\hfill \includegraphics[width=.5\textwidth]{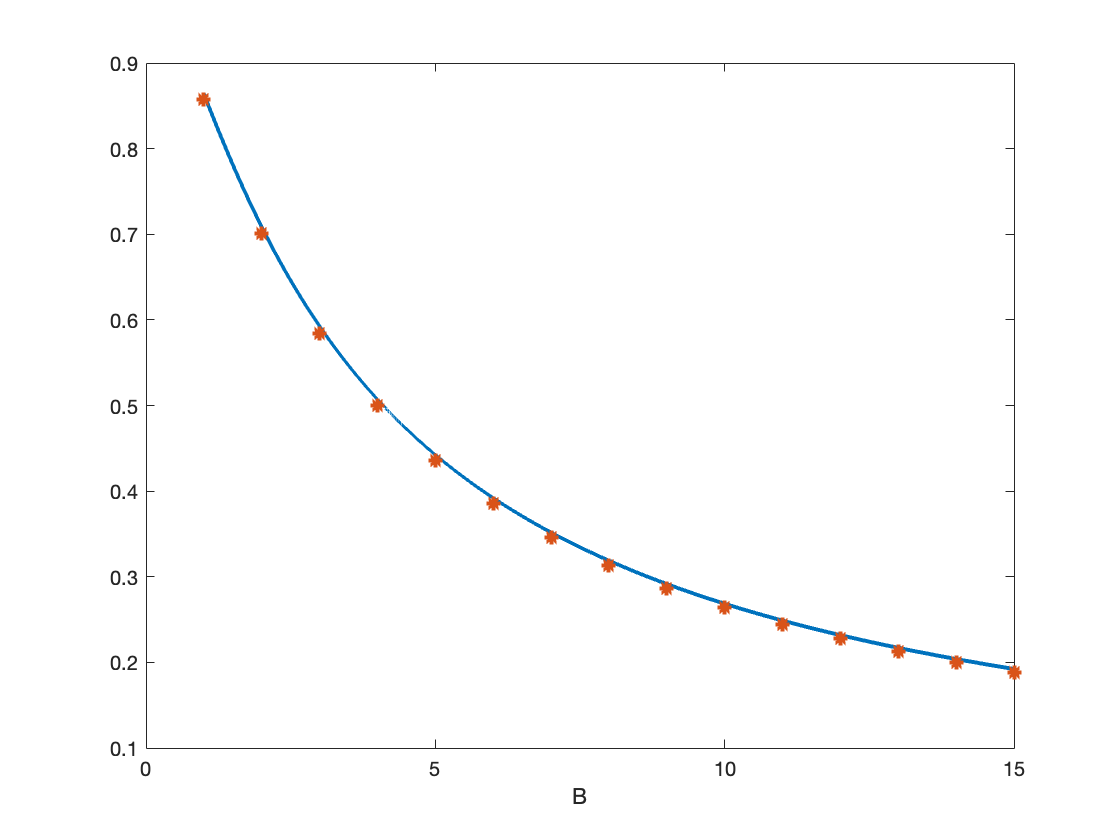}
    \caption{Left: plot of the smallest zeros of the polynomial $P^{(\bm \alpha ;\beta )}_{\bm{n}}$, with $\bm n = (300, 300)$,  $\beta=0$,  and $\bm \alpha=(600 A , 600 A)$, for $A=1, \dots, 15$, along with the predicted leftmost end-point of the support, given by \eqref{curveA}. Right: plot of the largest zeros of the polynomial $P^{(\bm \alpha ;\beta )}_{\bm{n}}$, with $\bm \alpha=(0 , 3/2)$, $\bm n = (300, 300)$, and $\beta=600 B$, for $B=1, \dots, 15$, along with the predicted rightmost end-point of the support, given by \eqref{curveB}.}
    \label{fig:5}
\end{figure}
\end{example}

All considerations above have been carried out under the assumption \eqref{integerBeta} that all $\beta$'s are integers. Consider the representation \eqref{representationReversedJPII} from Proposition~\ref{propReprJPTypeII}. We observe that for both polynomials in its right-hand side 
depending on $\beta$, their limit zero distribution 
depends only on the value of the limit $\beta/|\bm n|$, and not on the concrete values of $\beta$. Hence, the same is true for the expression of the $S$-transforms of the limit measures for $p$ and for the polynomial within the brackets on the right-hand side of \eqref{representationReversedJPII}. Finally, using \eqref{measureReversed}, we can conclude that the asymptotic zero distribution of $P_{\bm n}^{(\bm{a} ; \beta)}$ depends only on the limits \eqref{asymptJPIIassumptions} but not on the actual values of $\beta$. In other words, expression \eqref{densityJPII} is valid under the general assumptions \eqref{asymptJPIIassumptions}.

%%%%%%%%%%%%%%%%
\subsection{Type II Multiple Laguerre polynomials of the first kind}\ \label{subsec:MLTypeII}
%%%%%%%%%%%%%%%%

For $\bm{n}\in \N^r$, the Type II Multiple Laguerre polynomials of the first kind, $L_{\bm{n}}^{\bm{\alpha}}$, corresponding to the weights \eqref{Lkind1weights}, are polynomial of degree $|\bm n|-1$, satisfying 
$$
\int_0^{\infty} L_{\bm{n}}^{\bm{\alpha}}(x) x^{\alpha_j} e^{-x} x^k d x=0, \quad k=0,1, \ldots, n_j-1, \quad j=1, \dots, r,
$$
where parameters $\alpha_1, \ldots, \alpha_r>-1$ are such that $\alpha_i-\alpha_j \notin \mathbb{Z}$   whenever $ i \neq j $. These are hypergeometric polynomials (see \cite[\S 23.4.2]{MR2542683}):
\begin{equation} \label{MultLagerre1Def}
\begin{split}
(-1)^{|\bm{n}|} e^{-x} L_{\bm{n}}^{\bm{\alpha}}(x)=\prod_{j=1}^r\raising{\alpha_j+1}{n_j}\,   \HGF{r}{r}{\bm \alpha+\bm n +1} {\bm \alpha +1}{-x}.
\end{split}
\end{equation}
Notice that in this identity, neither the left-hand nor the right-hand side is a polynomial. However, the polynomials $L_{\bm{n}}^{\bm{\alpha}}$ do not vanish at the origin, and we can write their reciprocal as a finite convolution:

\begin{proposition}
    \label{prop:representationTypeIILaguerrekind1}
    For the reciprocal $p^*$ of the Type II Multiple Laguerre polynomials of the first kind $p(x)=L_{\bm{n}}^{\bm{\alpha}}$,
    \begin{equation}
        \label{eq:representationTypeIILaguerrekind1}
        p^*\simeq \HGF{2}{0}{-|\bm n|,1}{ \cdot }{x}\boxtimes_{|\bm n|}
\HGF{r+1}{r}{-|\bm n|,\ -|\bm n|-\bm \alpha}{-|\bm n|-\bm n-\bm \alpha}{x+1}.
\end{equation}
\end{proposition}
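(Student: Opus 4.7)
The plan is to apply Lemma \ref{lemma:trick} to $p(x) = L_{\bm n}^{\bm \alpha}(x)$, a polynomial of degree $N := |\bm n|$. The key observation is that identity \eqref{MultLagerre1Def} can be rewritten as the formal factorization
$$L_{\bm n}^{\bm \alpha}(x) \simeq \HGF{0}{0}{\cdot}{\cdot}{x}\, \HGF{r}{r}{\bm \alpha + \bm n + 1}{\bm \alpha + 1}{-x},$$
since $e^x = \HGF{0}{0}{\cdot}{\cdot}{x}$. Neither factor is itself a polynomial, but the product is, and that is all Lemma \ref{lemma:trick} requires.

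Next I would read off the parameters of Lemma \ref{lemma:trick}. For the first factor take $i_1 = j_1 = 0$ with empty tuples and $l_1 = 1$, so the sign $(-1)^{i_1 + j_1 + l_1 + 1}$ equals $+1$, matching the argument $+x$. For the second factor take $i_2 = j_2 = r$ and $l_2 = 0$; the equations $-\bm b_2 - N + 1 = \bm \alpha + \bm n + 1$ and $-\bm a_2 - N + 1 = \bm \alpha + 1$ force $\bm b_2 = -\bm \alpha - \bm n - N$ and $\bm a_2 = -\bm \alpha - N$, while the sign $(-1)^{2r + l_2 + 1} = -1$ matches the argument $-x$. Substituting into the conclusion of the lemma gives
$$p^*(x) \simeq \HGF{2}{0}{-N, 1}{\cdot}{x} \boxtimes_N \left[ \HGF{1}{0}{-N}{\cdot}{-x} \boxplus_N \HGF{r+1}{r}{-N,\ -\bm \alpha - N}{-\bm \alpha - \bm n - N}{x} \right].$$

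To finish, I would simplify the inner additive convolution. The binomial theorem gives
$$\HGF{1}{0}{-N}{\cdot}{-x} = \sum_{k=0}^N \binom{N}{k} x^k = (x+1)^N,$$
so the shift property \eqref{shift} yields $q(x) \boxplus_N (x+1)^N = q(x+1)$ for every $q \in \P_N$. Applying this inside the brackets with $q(x) = \HGF{r+1}{r}{-N,\ -\bm \alpha - N}{-\bm \alpha - \bm n - N}{x}$ and restoring $N = |\bm n|$ produces exactly the identity \eqref{eq:representationTypeIILaguerrekind1} claimed by the proposition.

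The main subtlety worth spelling out is the legitimacy of invoking Lemma \ref{lemma:trick} when one of the hypergeometric factors is the non-terminating series $e^x$. This is purely formal: the proof of the lemma goes through Theorem \ref{thm.additive.conv.HG} at the level of differential operators $p(\mathfrak d)$ acting on $x^N$, where only finitely many orders of differentiation contribute. The formal-power-series identity $L_{\bm n}^{\bm \alpha}(x) \simeq e^x \cdot \HGF{r}{r}{\bm \alpha + \bm n + 1}{\bm \alpha + 1}{-x}$ is therefore all one needs to justify the symbolic manipulation.
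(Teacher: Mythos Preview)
Your proof is correct and follows essentially the same approach as the paper's own proof: rewrite \eqref{MultLagerre1Def} as a product of two hypergeometric series using $e^x = {}_0F_0$, apply Lemma~\ref{lemma:trick}, and then simplify the inner additive convolution via $\HGF{1}{0}{-N}{\cdot}{-x} = (x+1)^N$ together with the shift property \eqref{shift}. Your explicit parameter bookkeeping and your remark on why Lemma~\ref{lemma:trick} remains valid when one factor is the non-terminating series $e^x$ are helpful additions that the paper leaves implicit.
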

\begin{proof}
Since
$$
e^{x}= \HGF{0}{0}{\cdot}{\cdot}{x}, 
$$
we can rewrite \eqref{MultLagerre1Def} as 
$$
p(x) \simeq \HGF{0}{0}{\cdot}{\cdot}{x} \HGF{r}{r}{\bm \alpha+\bm n +1} {\bm \alpha +1}{-x}.
$$
Applying the Lemma \ref{lemma:trick}  we get that 
$$
p^*\simeq \HGF{2}{0}{-|\bm n|,1}{\cdot}{x}\boxtimes_{|\bm n|}\left(\HGF{1}{0}{-|\bm n|}{\cdot}{-x} \boxplus_{|\bm n|}
\HGF{r+1}{r}{-|\bm n|,\ -|\bm n|-\bm \alpha}{-|\bm n|-\bm n-\bm \alpha}{x}\right).
$$
On the other hand, 
$$
\HGF{1}{0}{-|\bm n|}{\cdot}{-x} \simeq (x+1)^{|\bm n|},
$$
and from \eqref{shift} we get \eqref{eq:representationTypeIILaguerrekind1}. 
\end{proof}

\subsubsection{Real zeros: monotonicity and interlacing}\

The Type II Multiple Laguerre polynomials of the first kind $ L_{\bm{n}}^{\bm{\alpha}}$ can be obtained from the Jacobi-Piñeiro polynomials $P_{\bm{n}}^{(\bm \alpha;\beta)}$ by the limit process,
$$
L_{\bm{n}}^{\bm{\alpha}}(x) \simeq \lim_{\beta\to +\infty} \beta^{|\bm n|} P_{\bm{n}}^{(\bm \alpha;\beta)}(x/\beta), 
$$
which can be easily established by taking the limit directly in the orthogonality relations \eqref{JPtypeII}. In consequence, interlacing properties of the zeros of $L_{\bm{n}}^{\bm{\alpha}}$ follow from Theorem~\ref{interlacingJP}:
 \begin{theorem}\label{interlacingLfist}
    Let a multi-index $\bm n\in\N^{r}$, and  $\bm \alpha=(\alpha_1,\dots,\alpha_r)$ satisfy \eqref{assumptionsonalphasTypeII}. For each $i\in\{1,\dots,r\}$ and  $0\le t \leq 2$  satisfying \eqref{conditioninterlacing}, 
     the following interlacing holds,
\begin{equation*}
          L_{\bm n+\bm e_i}^{(\bm \alpha)}(x)\preccurlyeq L_{\bm n}^{(\bm \alpha)}\preccurlyeq L_{\bm n}^{(\bm \alpha+t \bm e_i)}          ,
\end{equation*}
where $\bm e_i\in \N ^r$ is the multi-index whose only non-zero entry (equal to $1$) is in the position $i$. 
\end{theorem}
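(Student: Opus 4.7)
The plan is to derive this interlacing for Type II multiple Laguerre polynomials of the first kind from the corresponding interlacing for Type II Jacobi--Pi\~neiro polynomials (Theorem~\ref{interlacingJP}) via a limiting argument, exploiting the relation
$$L_{\bm n}^{\bm \alpha}(x) \simeq \lim_{\beta \to \infty} \beta^{|\bm n|}\, P_{\bm n}^{(\bm \alpha;\beta)}(x/\beta),$$
noted in the paragraph preceding the theorem. This identity is a standard consequence of the orthogonality relations: after the substitution $x = y/\beta$ in \eqref{JPtypeII}, the factor $(1 - y/\beta)^\beta$ converges to $e^{-y}$ on $[0,+\infty)$, and the resulting relations are precisely the orthogonality conditions defining $L_{\bm n}^{\bm \alpha}$.

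First I would fix an integer $\beta \ge 0$. Since condition \eqref{conditioninterlacing} involves only the differences $\alpha_i - \alpha_j$ and is independent of $\beta$, the hypotheses of Theorem~\ref{interlacingJP} are satisfied, yielding the chain
$$P_{\bm n+\bm e_i}^{(\bm \alpha;\beta)} \preccurlyeq P_{\bm n}^{(\bm \alpha;\beta)} \preccurlyeq P_{\bm n}^{(\bm \alpha + t\bm e_i;\beta)}.$$
Next I would rescale via $x \mapsto x/\beta$, multiplying by $\beta$ raised to the degree of each polynomial. Since $\beta > 0$, this operation sends each root $\lambda$ to $\beta\lambda$, preserves the ordering of zeros, and hence preserves all interlacing relations.

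Then I would let $\beta \to \infty$ through the positive integers: by the limit identity above, each polynomial in the rescaled chain converges uniformly on compact sets (up to a non-zero multiplicative constant, irrelevant for zero interlacing) to the corresponding multiple Laguerre polynomial of the first kind. Since each limit has the expected degree ($|\bm n|+1$, $|\bm n|$, and $|\bm n|$, respectively), Hurwitz's theorem guarantees that the ordered zeros of the scaled Jacobi--Pi\~neiro polynomials converge to the ordered zeros of the Laguerre limits. Because the inequalities defining $\preccurlyeq$ are non-strict, they pass to the limit.

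The main obstacle I anticipate is the careful treatment of this limit transfer. Strict interlacing at each finite $\beta$ need not remain strict in the limit, but the non-strict $\preccurlyeq$ relation is exactly what continuity of zeros provides, so the statement of the theorem is the natural one to expect. A minor technical point is ensuring that no root escapes to infinity as $\beta \to \infty$, which follows from degree preservation (leading coefficients remain bounded away from zero along the sequence) together with uniform-on-compacta convergence.
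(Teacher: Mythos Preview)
Your approach is correct and matches the paper's: it too obtains Theorem~\ref{interlacingLfist} as a direct consequence of Theorem~\ref{interlacingJP} via the limit relation $L_{\bm n}^{\bm \alpha}(x) \simeq \lim_{\beta\to\infty}\beta^{|\bm n|}P_{\bm n}^{(\bm\alpha;\beta)}(x/\beta)$, taking $\beta\to\infty$ through nonnegative integers. The paper states this in a single sentence without spelling out the Hurwitz-type details you provide, so your write-up is in fact more complete.
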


\subsubsection{Zero asymptotics of Type II Multiple Laguerre polynomials of the first kind}\

We can use Proposition~\ref{prop:representationTypeIILaguerrekind1} to describe the asymptotic zero distribution of the rescaled polynomials $L^{(\bm \alpha_{\bm n} )}_{\bm{n}} (|\bm n| \, x)$, under assumptions \eqref{asymptMLIassumptions}, that is,
\begin{equation}
    \label{asymptTypeIIMLIassumptions}
    \lim_{|\bm n|\to \infty} \frac{\alpha_i^{(\bm n)}}{|\bm n|}=A_i\ge 0,    \quad \text{and} \quad \lim_{|\bm n|\to \infty} \frac{n_i}{|\bm n|}=\theta_i>0, \qquad i=1, \dots, r,
\end{equation}
where $\bm \alpha_{\bm n} = \left(\alpha_1^{(\bm n)}, \dots, \alpha_r^{(\bm n)} \right)$, $\bm n =(n_1, \dots, n_r)\in \N^r$.
\begin{theorem}
    \label{prop:TypeIILag1KindAsymptitics}
Under the assumptions \eqref{asymptTypeIIMLIassumptions}, function $y= u \mathcal G_\mu(u)$, where $\mathcal G_\mu$ is the Cauchy transform of any limiting zero distribution $\mu$ of polynomials $L^{(\bm \alpha_{\bm n} )}_{\bm{n}} (|\bm n|\,  x) $, satisfies the algebraic equation
\begin{equation}
    \label{eqTypeILagKind1asympt1}
 u  \prod_{i=1}^r \left(    y + A_i+ \theta_i      -u      \right) 
 =   \left( u- y    \right)    \prod_{i=1}^r  \left(     y + A_i     -u   \right) . 
\end{equation}
\end{theorem}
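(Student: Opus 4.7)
The plan is to exploit the representation from Proposition~\ref{prop:representationTypeIILaguerrekind1}, which writes the reciprocal of $p = L^{(\bm \alpha_{\bm n})}_{\bm n}$ as a finite free multiplicative convolution, pass to the asymptotic regime via Theorem~\ref{prop:finiteAsymptotics.updated}, and translate the resulting $S$-transform identity into the algebraic equation for $y = u\,\mathcal G_\mu(u)$.

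First, set $N = |\bm n|$. Since the zeros of $p(Nx)$ are $\lambda_j/N$ (where $\lambda_j$ are the zeros of $p$), and the zeros of $\dil{N} p^*$ are $N/\lambda_j$, the limit zero distribution of the latter is $\mu^*$, the push-forward of $\mu$ under $\lambda \mapsto 1/\lambda$. By Proposition~\ref{prop:representationTypeIILaguerrekind1} and \eqref{identityMult3a}, we have
$$\dil{N} p^* \simeq (\dil{N} A) \boxtimes_N B,\qquad A(x) = \HGF{2}{0}{-N,1}{\cdot}{x},\qquad B(x) = \HGF{r+1}{r}{-N,\ -N - \bm \alpha_{\bm n}}{-N - \bm n - \bm \alpha_{\bm n}}{x+1}.$$

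Next, I would identify the $S$-transforms of the two building blocks. Following the arguments in the proof of Theorem~\ref{thm:StranfHyper2F1}, the polynomial $\dil{N} A$ (which, up to normalization, is $\HGF{2}{0}{-N,1}{\cdot}{x/N}$) has limit zero distribution with $S$-transform $z+1$ (the special case of \eqref{Sfor2F0} when the limit parameter equals $0$). For $B$, Corollary~\ref{cor:shifted} with $s = t = r$, $c = d = 1$, and limit parameters $A_i^{\mathrm{par}} = -1 - A_i$, $B_j^{\mathrm{par}} = -1 - \theta_j - A_j$, yields an algebraic equation for $w = S_{\mu_B}(u)$. Theorem~\ref{prop:finiteAsymptotics.updated}(ii) then gives $S_{\mu^*}(u) = (u+1)\,S_{\mu_B}(u)$, and substituting $S_{\mu_B}(u) = S_{\mu^*}(u)/(u+1)$ back produces an algebraic relation for $v = S_{\mu^*}(u)$:
$$v \prod_{j=1}^r [uv + (u+1)(u - \theta_j - A_j)] = (v + u + 1) \prod_{i=1}^r [uv + (u+1)(u - A_i)].$$

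Finally, I would invoke the reversal identity \eqref{measureReversed}, $S_\mu(z)\,S_{\mu^*}(-z-1) = 1$, setting $u = -z-1$ and $v = 1/S_\mu(z)$, and then use the parameterization coming from \eqref{eq.moment.Rtransform}: with $x$ the argument of $\mathcal G_\mu$ and $y = x\,\mathcal G_\mu(x)$, one has $z = y - 1$ and $S_\mu(z) = y/[x(y-1)]$, so that $zS_\mu(z) = y/x$. A direct computation reduces every factor $-(z+1) + zS_\mu(z)(z+1+\alpha)$ to $y(y + \alpha - x)/x$ and $1 - zS_\mu(z)$ to $(x-y)/x$, and after clearing the common powers of $y/x$ the equation collapses to
$$x \prod_{j=1}^r (y + \theta_j + A_j - x) = (x-y) \prod_{i=1}^r (y + A_i - x),$$
which is \eqref{eqTypeILagKind1asympt1} upon renaming $x \mapsto u$.

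The main obstacle is the chain of functional inverses involved: the $S$-transform is itself defined via $\mathcal M^{-1}_\mu$, the reversal identity \eqref{measureReversed} relates $\mu$ and $\mu^*$ through another functional inverse, and one then has to unwind everything to recover $\mathcal G_\mu$. Careful tracking of variables, signs, and scalings is required. One should also verify the hypotheses --- that the limit parameters $A_i^{\mathrm{par}}, B_j^{\mathrm{par}}$ obey \eqref{mainassumptionsAB}, that the zeros of the building blocks are uniformly bounded, and that $\mu$ has no mass at the origin so that \eqref{measureReversed} is applicable --- which should be routine under the stated assumptions $A_i \ge 0$ and $\theta_i > 0$.
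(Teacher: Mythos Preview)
Your proposal is correct and follows essentially the same route as the paper: use Proposition~\ref{prop:representationTypeIILaguerrekind1} to write $p^*(x/|\bm n|)$ as a $\boxtimes_{|\bm n|}$-convolution, apply \eqref{Sfor2F0} (with limit parameter $0$) and Corollary~\ref{cor:shifted} to the two factors, multiply the $S$-transforms via Theorem~\ref{prop:finiteAsymptotics.updated}, then invoke \eqref{measureReversed} and unwind back to $\mathcal G_\mu$. The only cosmetic difference is that the paper passes through $\mathcal M_\mu^{-1}$ explicitly before reaching $y=u\,\mathcal G_\mu(u)$, whereas you go directly via the parametrization $z=y-1$, $z\mathcal S_\mu(z)=y/x$; the algebra is identical.
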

In the particular case of  $\bm \alpha$ independent on $\bm n$, so that all $A_j=0$, it simplifies to
\begin{equation}
    \label{eqTypeILagKind1asympt2}
 u  \prod_{i=1}^r \left(    y  + \theta_i      -u      \right) 
 =  -   \left(     y      -u   \right)^{r+1} .  
\end{equation}
\begin{proof}
If  
$$
p_{\bm n}(x)= L^{(\bm \alpha_{\bm n} )}_{\bm{n}} (  x) ,
$$
then
$$
\left( L^{(\bm \alpha_{\bm n} )}_{\bm{n}} (|\bm n|\,  x)  \right)^* \simeq p^*_{\bm n}(x/|\bm n|).
$$
By \eqref{eq:representationTypeIILaguerrekind1},
\begin{equation}
    \label{representationpreciprLag}
        p^*(x/|\bm n|)\simeq \HGF{2}{0}{-|\bm n|,1}{\cdot}{x/|\bm n|}\boxtimes_{|\bm n|}
\HGF{r+1}{r}{-|\bm n|,\ -|\bm n|-\bm \alpha_{\bm n}}{-|\bm n|-\bm n-\bm \alpha_{\bm n}}{x+ 1}.
\end{equation}
By Corollary~\ref{cor:shifted}, the $S$-transform $\widetilde w=\mathcal S_{\widetilde  \mu}(z)$ of a weak-* limit $\widetilde \mu$ of the normalized zero-counting measures of the $_{r+1}F_r$ polynomial on the right-hand side satisfies the equation
\begin{equation}
    \label{SforIILag2shifted}
\widetilde w    \prod_{i=1}^r \left(   z \widetilde w +   z- A_i- \theta_i    \right)
 =    \left(    \widetilde w +1  \right)    \prod_{i=1}^r  \left(   z \widetilde w +  z- A_i \right).
\end{equation}

Taking into account \eqref{Sfor2F0} and the representation \eqref{representationpreciprLag}, we see that the $S$-transform $w=\mathcal S_{\mu^*}(z)$ of the normalized zero-counting measure of reversed scaled polynomials $\left( L^{(\bm \alpha_{\bm n} )}_{\bm{n}} (|\bm n|\,  x)  \right)^*$ is
$$
 \mathcal S_{\mu^*}(z)= (z+1) \mathcal S_{\widetilde  \mu}(z).
$$
Thus, making the change of variables $\widetilde w = w /  (z+1)$ in \eqref{SforIILag2shifted}, we get an equation for $w=\mathcal S_{\mu^*}(z)$:
\begin{equation}
    \label{SforIILag2shiftedBis}
 w    \prod_{i=1}^r \left(   z   w +  (z+1)\left(  z- A_i- \theta_i    \right)\right)
 =    \left(  w +z+1  \right)    \prod_{i=1}^r  \left(   z   w +  (z+1)\left( z- A_i \right)\right) .
\end{equation}
In particular, the equation for the shifted $S$-transform $w=\mathcal S_{\mu^*}(-z-1)$ is
$$
 w    \prod_{i=1}^r \left( -  (z+1)   w +   z\left( z+1+ A_i+ \theta_i    \right) \right)
 =    \left(  w -z  \right)    \prod_{i=1}^r  \left(  - (z+1)   w + z \left( z+1+ A_i \right)\right) .
$$
By \eqref{measureReversed}, the $S$-transform $w=\mathcal S_{\mu}(z)$ of the normalized zero-counting measure of the scaled polynomials $ L^{(\bm \alpha_{\bm n} )}_{\bm{n}} (|\bm n|\,  x)  $ satisfies the equation 
$$
   \prod_{i=1}^r \left(  \left( z+1+ A_i+ \theta_i    \right)z w  -(z+1)       \right) 
 =   \left(  1 -zw  \right)    \prod_{i=1}^r  \left(    \left( z+1+ A_i \right)z w -(z+1)   \right) .
$$
By \eqref{defStranform}, the equation for $u=\mathcal M_\mu^{-1}(z)$ is
$$
   \prod_{i=1}^r \left(  \left( z+1+ A_i+ \theta_i    \right) u  -1       \right) 
 =    \left(1- (z+1) u  \right)    \prod_{i=1}^r  \left(    \left( z+1+ A_i \right)  u -1   \right) . 
	$$
Recalling the definition \eqref{defMtransform} of the $M$-transform in terms of the Cauchy transform  $\mathcal G_\mu$ of $\mu$, we have that
$$
z=\frac{1}{u}  \mathcal G_\mu\left(\frac{1}{u}\right)-1;
$$
with $u\mapsto 1/u$, we arrive at the algebraic equation \eqref{eqTypeILagKind1asympt1} for $y= u \mathcal G_\mu(u)$.
\end{proof}

\begin{example}
    For $r=1$ (so that $\theta_1=1$) and $\bm \alpha$ independent on $\bm n$, we get the equation
    $$
    y^2-u y + u=0,
    $$
    which yields that
    $$
   \mathcal G_\mu(u)= \frac{1}{2}\left( 1- \sqrt{\frac{u-4}{u}}\right),
    $$
    which corresponds to the Marchenko-Pastur distribution, supported on $[0,4]$.
    
    For $r=2$ and $\bm \alpha$ independent of $\bm n$, denoting $\theta_1=\theta$, the equation \eqref{eqTypeILagKind1asympt2} reduces to
    $$
   y^3 -2 u y^2+ \left(u+1\right)u y +u \left(  \theta(1-\theta) -u \right)=0.
    $$
    The support of $\mu$ in this case is the interval $[0,c^*]$, where $c=c^*$ is the positive solution of the equation
    $$
    (1-2 \theta )^2 c^2-2 (2-9 (1-\theta  ) \theta ) c -27 (\theta -1)^2 \theta ^2 =0,
    $$
    \begin{figure}[ht]
    \centering
    \includegraphics[width=.6\textwidth]{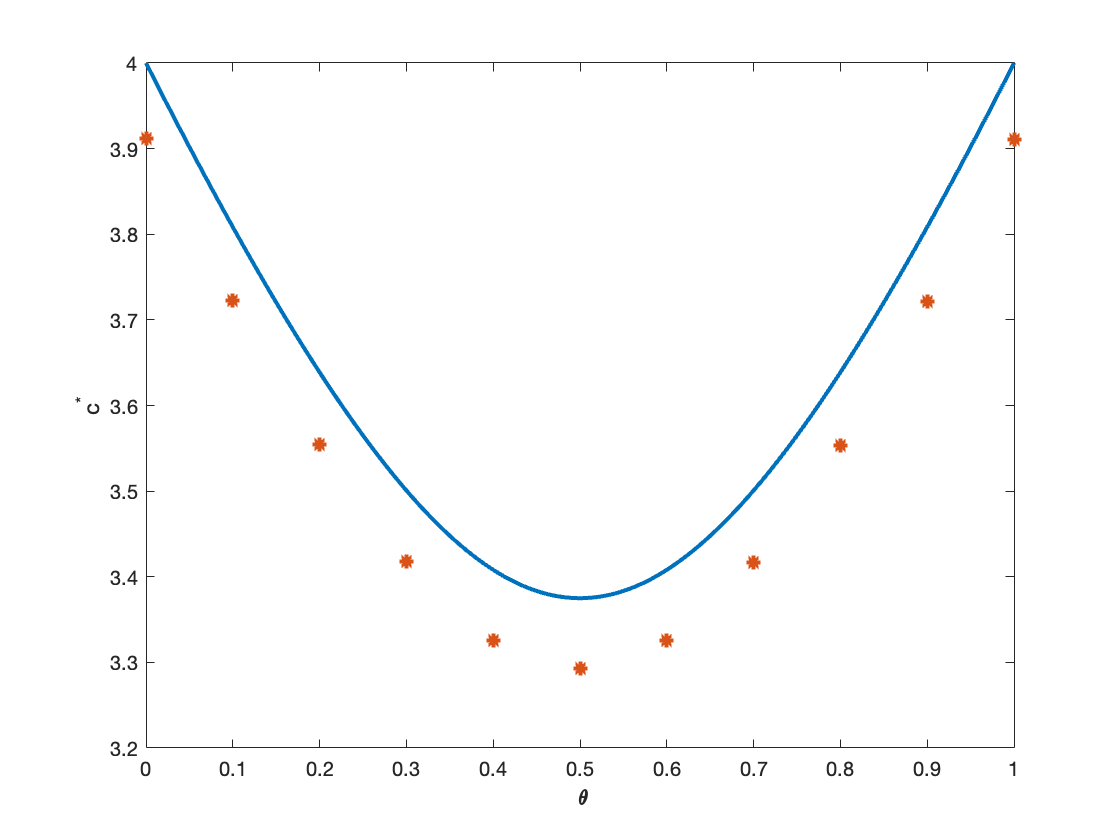}
    \caption{Graph of the predicted end-point $c^*$, given in \eqref{c*EndptIILag1k}, as function of $\theta$, along with the largest zero of the polynomial $L^{(\bm \alpha  )}_{\bm{n}} (500\,  x) $, with $\bm \alpha =(0,1/3)$ and   $\bm n = (500\, \theta  , 500\,  (1-\theta))$, for $\theta=k/10$, $k=0,1,\dots, 10$.   }
    \label{fig:EndptIILag1k}
\end{figure}
    that is
    \begin{equation}
        \label{c*EndptIILag1k}
    c^*=c^*(\theta)=\frac{27 \theta ^2(1-\theta  )^2 }{9 \theta(1-\theta ) 
   -2+2 (1-3  \theta (1-\theta ))^{3/2}}\in [27/8, 4],
\end{equation}
see Figure~\ref{fig:EndptIILag1k}.
\end{example}

    \subsection{Type II Multiple Laguerre of the second kind}\label{sec:T2ML2k}\

Let $\alpha>-1$ and  $\bm c =(c_1,\dots,c_r)\in \R_{>0}^r$ be such that $c_i\not= c_j$ for $i\not= j$.  For $\bm{n}\in \N^r$, the Type II Multiple Laguerre polynomials of the second kind, $\mathcal L_{\bm{n}}^{\alpha, \bm{c}}$, corresponding to the weights \eqref{Lkind2weights}, are polynomial of degree $|\bm n|-1$, satisfying 
$$
\int_0^{\infty} \mathcal L_{\bm{n}}^{\alpha, \bm{c}}(x) \, x^{k+\alpha} e^{-c_jx} \, d x=0, \quad k=0,1, \ldots, n_j-1, \quad j=1, \dots, r.
$$
Explicitly, see \cite[\S 23.4.2]{MR2542683},
\begin{equation}
\label{eq:TypeII.Laguerre.second}
    \mathcal    L_{\bm{n}}^{(\alpha,\bm{c})}(x)=\sum_{k_1=0}^{n_1}\cdots\sum_{k_r=0}^{n_r}(-1)^{|\bm{k}|}\binom{\abs{\bm{n}}+\alpha}{|\bm{k}|}\prod_{j=1}^r\binom{n_j}{k_j}\frac{|\bm{k}|!}{c_1^{k_1}\cdots c_r^{k_r}}\, x^{\abs{\bm{n}}-|\bm{k}|} .
\end{equation}
The change of variables $l_j=n_j-k_j$ for $j=1,\dots,r$, yields the equivalent expression (see also \cite[Eq (18)]{JPOP24} or \cite[\S 4]{BCVA2005}) 
\begin{align*}
\mathcal   L_{\bm{n}}^{(\alpha,\bm{c})}(x)&=\frac{\raising{\alpha+1}{\abs{\bm{n}}}}{c_1^{n_1}\cdots c_r^{n_r}}\sum_{k=0}^{\abs{\bm n}} \frac{x^{k}}{\raising{\alpha+1}{k}} \sum_{l_1+\cdots+l_r=k}\frac{\raising{-n_1}{l_1}c_1^{l_1}}{l_1!} \cdots \frac{\raising{-n_r}{l_r}c_j^{l_r}}{l_r!}  \\
&\simeq F_{1: 0 ;\dots; 0}^{0: 1 ;\dots;1} \left[\left.\begin{array}{c}
\quad \cdot \quad :\ - n_1\ ; \dots ;\ -n_r\  \\
\alpha+1 :\quad \cdot\quad ;\dots ; \quad \cdot \quad
\end{array} \right\rvert\,  c_1x,\dots,c_r x \right] .
\end{align*}

Finally, we can express them in terms of the finite free convolution of simpler polynomials:
\begin{theorem} \label{thm:factorization.LaguerreII.sec.kind}
For $\bm{n}\in \N^r$, $\alpha>-1$ and  $c_1,\dots,c_r>0$ such that $c_i\not= c_j$ for $i\not= j$. The Laguerre polynomials  of second kind of Type II have the following equivalent representations:
\begin{align}
\label{eq:factorization.LaguerreII.sec.kind}
  \mathcal   L_{\bm{n}}^{(\alpha,\bm{c})}(x)&= q^{(\alpha)} \boxtimes_{|\bm n|}(p_1\boxplus_{|\bm n|}p_2\boxplus_{|\bm n|}\dots \boxplus_{|\bm n|} p_r)\\
  \label{eq:factorization.LaguerreII.sec.kindBis}
  &= \falling{\alpha+|\bm n|}{|\bm n|} \HGF{1}{1}{-|\bm n|}{\alpha+1}{x}\boxtimes_{|\bm n|}\left(\prod_{j=1}^r\left(x-1/c_j \right)^{n_j} \right),
\end{align}

where 
\begin{equation*}
p_j= \frac{\falling{n_j}{|\bm n|}}{c_j^{|\bm n|}}\HGF{1}{1}{-|\bm n|}{n_j-|\bm n|+1}{c_jx}= (-x)^{|\bm n|-n_j}\frac{\falling{|\bm n|}{n_j}}{c_j^{n_j}}\HGF{1}{1}{-|\bm n|}{|\bm n|-n_j+1}{c_jx}
\end{equation*}
and
\begin{equation*}
    q^{(\alpha)}(x)= \frac{\falling{\alpha+|\bm n|}{|\bm n|}}{|\bm n|!} \HGF{2}{1}{-|\bm n|,\ 1}{\alpha+1}{x}.
\end{equation*}
\end{theorem}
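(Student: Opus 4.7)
Both factorizations are polynomial identities up to a nonzero constant, so I would prove them by computing the elementary symmetric coefficients $e_k$ of the right-hand sides and comparing them with the explicit expansion \eqref{eq:TypeII.Laguerre.second} of $\mathcal L_{\bm n}^{(\alpha,\bm c)}$. All the tools needed are already at hand: the coefficient formulas \eqref{coeffMultConv} and \eqref{coeffAdditiveConv} for the two finite free convolutions, together with routine manipulations of rising and falling factorials.

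For \eqref{eq:factorization.LaguerreII.sec.kindBis}, the roots of $\prod_j(x-1/c_j)^{n_j}$ (namely $1/c_j$ with multiplicity $n_j$) yield at once $e_k\bigl(\prod_j(x-1/c_j)^{n_j}\bigr)=\sum_{l_1+\dots+l_r=k}\prod_j\binom{n_j}{l_j}c_j^{-l_j}$, while a direct extraction from the hypergeometric series produces $e_k\bigl(\HGF{1}{1}{-|\bm n|}{\alpha+1}{x}\bigr)=(-1)^{|\bm n|}\binom{|\bm n|}{k}/\raising{\alpha+1}{|\bm n|-k}$. Plugging both into \eqref{coeffMultConv} the binomials $\binom{|\bm n|}{k}$ cancel, and multiplying by $\falling{\alpha+|\bm n|}{|\bm n|}=\raising{\alpha+1}{|\bm n|}$ the ratio $\raising{\alpha+1}{|\bm n|}/\raising{\alpha+1}{|\bm n|-k}$ collapses to $\falling{\alpha+|\bm n|}{k}$. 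A term-by-term match with \eqref{eq:TypeII.Laguerre.second} then yields the identity up to an overall $(-1)^{|\bm n|}$.

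For \eqref{eq:factorization.LaguerreII.sec.kind}, I would work with the second presentation of $p_j$, under which $p_j$ is an honest polynomial of degree $|\bm n|$ having a zero of multiplicity $|\bm n|-n_j$ at the origin. A short computation gives the clean formula $e_k(p_j)=(-1)^{|\bm n|}\falling{|\bm n|}{k}\binom{n_j}{k}c_j^{-k}$ for $0\le k\le n_j$, and $0$ otherwise. Inserting this into the iterated version of \eqref{coeffAdditiveConv}, the factors $\falling{|\bm n|}{l_j}$ cancel exactly, producing
\[
e_k\bigl(p_1\boxplus_{|\bm n|}\cdots\boxplus_{|\bm n|}p_r\bigr)=(-1)^{r|\bm n|}\,\falling{|\bm n|}{k}\,e_k\!\left(\prod_{j=1}^r(x-1/c_j)^{n_j}\right).
\]
From the hypergeometric series I obtain $e_k(q^{(\alpha)})=(-1)^{|\bm n|}\binom{\alpha+|\bm n|}{k}$; combining through \eqref{coeffMultConv} and using $\falling{|\bm n|}{k}/\binom{|\bm n|}{k}=k!$, the product $k!\binom{\alpha+|\bm n|}{k}=\falling{\alpha+|\bm n|}{k}$ reproduces the coefficients from the previous step and identifies the two expressions, again up to sign.

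The only genuinely delicate point is the interpretation of $p_j$ itself: its first form $\falling{n_j}{|\bm n|}\HGF{1}{1}{-|\bm n|}{n_j-|\bm n|+1}{c_jx}$ is of indeterminate type $0\cdot\infty$ whenever $|\bm n|>n_j$, so every coefficient manipulation must be carried out through the second form, relying on the Laguerre reduction $L_{|\bm n|}^{(n_j-|\bm n|)}(x)\propto(-x)^{|\bm n|-n_j}L_{n_j}^{(|\bm n|-n_j)}(x)$. Once this is properly set up, the remaining content of the proof is just careful bookkeeping of rising and falling factorials and recognizing the telescoping cancellations; no additional analytic ingredient is needed.
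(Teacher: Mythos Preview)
Your proposal is correct and, for identity \eqref{eq:factorization.LaguerreII.sec.kind}, follows essentially the paper's route: compute $e_k(p_j)$, feed into the iterated additive formula \eqref{coeffAdditiveConv}, then convolve multiplicatively with $q^{(\alpha)}$ and match against \eqref{eq:TypeII.Laguerre.second}. Your observation about the indeterminate first form of $p_j$ is well taken; the paper simply writes down the intended $e_k(p_j)$ without comment.

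For identity \eqref{eq:factorization.LaguerreII.sec.kindBis} your route differs from the paper's. You compute the coefficients of both factors on the right-hand side directly and combine via \eqref{coeffMultConv}. The paper instead derives the second representation from the first: it factors $q^{(\alpha)}$ as $\HGF{1}{1}{-|\bm n|}{\alpha+1}{x}\boxtimes_{|\bm n|}\HGF{2}{0}{-|\bm n|,1}{\cdot}{x}$ via Theorem~\ref{thm:multiplicativeConvHG}, substitutes into \eqref{eq:factorization.LaguerreII.sec.kind}, and then checks by a coefficient computation that $\frac{1}{|\bm n|!}\HGF{2}{0}{-|\bm n|,1}{\cdot}{x}\boxtimes_{|\bm n|}(p_1\boxplus_{|\bm n|}\cdots\boxplus_{|\bm n|}p_r)$ equals $\prod_j(x-1/c_j)^{n_j}$. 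Your direct argument is shorter and more self-contained; the paper's detour has the minor advantage of exhibiting explicitly how the two representations are linked through the factorization of $q^{(\alpha)}$. Both the paper and you are a bit casual about the global sign $(-1)^{|\bm n|}$ (the paper's displayed $e_k$ values silently drop it), but this does not affect the argument.
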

\begin{proof}
For \eqref{eq:factorization.LaguerreII.sec.kind}, notice that in the notation \eqref{defFMULTCONV}, 
\begin{equation}\label{pcoefficient}
    e_k(p_j)=\begin{cases}
       \dfrac{\falling{|\bm n|}{k}\falling{n_j}{k}}{k!c_j^{k}} & \text{for } 0\leq k\leq n_j, \\
        0 & \text{for } n_j< k \leq |\bm n| .
    \end{cases}
\end{equation}
Therefore, by \eqref{coeffAdditiveConv},  
\begin{equation}\label{psum}
\begin{split}
    e_k(p_1\boxplus_{|\bm n|} p_2\boxplus_{|\bm n|} \cdots\boxplus_{|\bm n|} p_r)&=\falling{|\bm n|}{k}\sum_{k_1+\dots+k_r=k}\prod_{j=1}^{r} \frac{\falling{|\bm n|}{k_j}\falling{n_j}{k_j}}{\falling{|\bm n|}{k_j}k_j!c_j^{k_j}}\\
    &=\falling{|\bm n|}{k}\sum_{k_1+\dots+k_r=k}\prod_{j=1}^{r}\binom{n_j}{k_j}\frac{1}{c_j^{k_j}}.
\end{split}
\end{equation}
For the polynomial $q^{(\alpha)}$, we have
\begin{equation*}
    e_k(q^{(\alpha)})= \binom{|\bm n|}{k}\frac{\falling{|\bm n|+\alpha}{k}}{\falling{|\bm n|}{k}}.
\end{equation*}
By the formula \eqref{coeffMultConv}, for the coefficients of the finite free multiplicative convolution, we obtain after some simplifications
$$e_k(q^{(\alpha)} \boxtimes_{|\bm n|}(p_1\boxplus_{|\bm n|} p_2\boxplus_{|\bm n|}\dots \boxplus_{|\bm n|} p_r))=\falling{{|\bm n|}+\alpha}{k} \sum_{k_1+\dots+k_r=k}\prod_{j=1}^{r}\binom{n_j}{k}\frac{1}{c_j^{k_j}}.$$
It remains to compare these expressions with \eqref{eq:TypeII.Laguerre.second} to get \eqref{eq:factorization.LaguerreII.sec.kind}. 

For \eqref{eq:factorization.LaguerreII.sec.kindBis}, we use the decomposition for $q^{(\alpha)}$,
\begin{equation*}
q^{(\alpha)}= \frac{\falling{\alpha+|\bm n|}{|\bm n|}}{|\bm n|!}\HGF{1}{1}{-|\bm n|}{\alpha+1}{x}\boxtimes_{|\bm n|}\HGF{2}{0}{-|\bm n|,1}{\cdot}{x},
\end{equation*}
and substitute it in \eqref{eq:factorization.LaguerreII.sec.kind}:
\begin{equation}\label{stepdecomposition}
    \begin{split}
          \mathcal   L_{\bm{n}}^{(\alpha,\bm{c})}(x)=&\falling{\alpha+|\bm n|}{|\bm n|} \HGF{1}{1}{-|\bm n|}{\alpha+1}{x} \boxtimes_{|\bm n|}\\
          &\left[\frac{1}{|\bm n|!}\HGF{2}{0}{-|\bm n|,1}{\cdot}{x}\boxtimes_{|\bm n|}(p_1\boxplus_{|\bm n|}p_2\boxplus_{|\bm n|}\dots \boxplus_{|\bm n|} p_r)\right].
    \end{split}
\end{equation}
Observe that 
\begin{equation*}
    e_k\left(\frac{1}{|\bm n|!}\HGF{2}{0}{-|\bm n|,1}{\cdot}{x} \right)= \binom{|\bm n|}{k}\frac{1}{\falling{|\bm n|}{k}}.
\end{equation*}
Using it with \eqref{psum} and \eqref{coeffMultConv}, we get that
\begin{equation}
    e_k\left(\frac{1}{|\bm n|!}\HGF{2}{0}{-|\bm n|,1}{\cdot}{x}\boxtimes_{|\bm n|}(p_1\boxplus_{|\bm n|}p_2\boxplus_{|\bm n|}\dots \boxplus_{|\bm n|} p_r)\right)=\sum_{k_1+\dots+k_r=k}\prod_{j=1}^{r}\binom{n_j}{k_j}\frac{1}{c_j^{k_j}}.
\end{equation}
By \eqref{pcoefficient}, the non-vanishing coefficients of the sum in the right side  correspond to the indices such that $0\leq k_j\leq n_j$. Additionally,
\begin{equation*}
    e_k\left(\prod_{j=1}^r\frac{1}{c_j^{n_j}}\HGF{1}{0}{-n_j}{\cdot}{c_jx} \right)= \sum_{k_1+\dots+k_r=k}\prod_{j=1}^{r}\binom{n_j}{k_j}\frac{1}{c_j^{k_j}}.
\end{equation*}
Thus,
$$
\frac{1}{|\bm n|!}\HGF{2}{0}{-|\bm n|,1}{\cdot}{x}\boxtimes_{|\bm n|}(p_1\boxplus_{|\bm n|}p_2\boxplus_{|\bm n|}\dots \boxplus_{|\bm n|} p_r)=\prod_{j=1}^r\frac{1}{c_j^{n_j}}\HGF{1}{0}{-n_j}{\cdot}{c_jx},
$$
and it remains to use it in \eqref{stepdecomposition} to obtain \eqref{eq:factorization.LaguerreII.sec.kindBis}.
\end{proof}

\begin{theorem}
    Let a multi-index $\bm n\in\N^{r}$, $\alpha>-1$  and  $\bm c =(c_1,\dots,c_r)\in \R_{>0}^r$ be such that $c_i\not= c_j$ for $i\not= j$.  For  $0< t\leq 2$,      the following interlacing holds:
\begin{equation}
\mathcal   L_{\bm{n}}^{(\alpha,\bm{c})}(x)\preccurlyeq   \mathcal   L_{\bm{n}}^{(\alpha+t,\bm{c})}(x)        .
\end{equation}
\end{theorem}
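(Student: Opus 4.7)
The plan is to exploit the factorization established in Theorem~\ref{thm:factorization.LaguerreII.sec.kind}, specifically the representation \eqref{eq:factorization.LaguerreII.sec.kindBis}, which isolates the dependence on $\alpha$ into a single Laguerre-type factor:
$$
\mathcal{L}_{\bm n}^{(\alpha,\bm c)}(x)\simeq \HGF{1}{1}{-|\bm n|}{\alpha+1}{x}\boxtimes_{|\bm n|}\left(\prod_{j=1}^r\bigl(x-1/c_j\bigr)^{n_j}\right).
$$
Up to a positive scalar, the first factor is the classical Laguerre polynomial $L_{|\bm n|}^{(\alpha)}(x)$, which lies in $\P_{|\bm n|}(\R_{>0})$ for $\alpha>-1$, while the second factor, being a product of powers of linear factors with roots $1/c_j>0$, lies in $\P_{|\bm n|}(\R_{>0})$.

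The key intermediate step is to establish the $\alpha$-monotonicity of the Laguerre factor, i.e., for $0<t\leq 2$,
$$
\HGF{1}{1}{-|\bm n|}{\alpha+1}{x}\preccurlyeq \HGF{1}{1}{-|\bm n|}{\alpha+t+1}{x}.
$$
This is the classical interlacing of Laguerre polynomials with respect to their parameter and can be deduced in the finite-free framework along the same lines as \cite[Eq.~(61)]{martinez2023real} (the $_2F_1$ analog was the main input in the proof of Theorem~\ref{Properties.JP}); alternatively, it follows from the well-known fact that $\tfrac{d}{d\alpha}L_n^{(\alpha)}$ has its zeros interlacing those of $L_n^{(\alpha)}$. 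Either way, the statement is available and does not require the multi-index structure.

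With this interlacing in hand, I would apply Proposition~\ref{lem:preservinginterlacingMult} with the fixed non-negative polynomial $q(x)=\prod_{j=1}^r (x-1/c_j)^{n_j}\in\P_{|\bm n|}(\R_{\ge 0})$ as the pivot. Since multiplicative convolution by a polynomial with non-negative real roots preserves the interlacing relation, we obtain
$$
\HGF{1}{1}{-|\bm n|}{\alpha+1}{x}\boxtimes_{|\bm n|}q(x)\preccurlyeq \HGF{1}{1}{-|\bm n|}{\alpha+t+1}{x}\boxtimes_{|\bm n|}q(x),
$$
and because interlacing is invariant under multiplication of both polynomials by positive constants, this yields the desired relation $\mathcal{L}_{\bm n}^{(\alpha,\bm c)}\preccurlyeq\mathcal{L}_{\bm n}^{(\alpha+t,\bm c)}$.

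The principal obstacle is the first step: clean identification of the interlacing of $_1F_1$ polynomials with shifted lower parameter. Once that is in place, everything else is a direct application of the interlacing-preservation property of $\boxtimes_n$. A minor subtlety is the scalar $\falling{\alpha+|\bm n|}{|\bm n|}$ appearing in \eqref{eq:factorization.LaguerreII.sec.kindBis}, which is positive for $\alpha>-1$ and thus does not affect the order of the real roots; hence the $\simeq$ in \eqref{eq:factorization.LaguerreII.sec.kindBis} is compatible with interlacing comparisons between different values of the parameter.
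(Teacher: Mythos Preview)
Your proposal is correct and follows essentially the same approach as the paper: use the factorization \eqref{eq:factorization.LaguerreII.sec.kindBis}, invoke the known interlacing of the $_1F_1$ (Laguerre) factor under a shift of the lower parameter, and then apply Proposition~\ref{lem:preservinginterlacingMult} with the positive-rooted polynomial $\prod_j(x-1/c_j)^{n_j}$. The only minor discrepancy is the citation: the paper quotes \cite[Eq.~(55)]{martinez2023real} rather than Eq.~(61) for the Laguerre interlacing, but this is the same input and does not affect the argument.
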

\begin{proof}
   By \cite[Eq. (55)]{martinez2023real}, we have the following interlacing for $0<t\leq 2$,
\begin{equation}\label{interalcingLaguerre}
    \HGF{1}{1}{-|\bm n|}{\alpha+1}{x}\prec \HGF{1}{1}{-|\bm n|}{\alpha+1+t}{x},
\end{equation}
Since $c_i>0$ for each $i=1,\dots,r$,  
\begin{equation}
    \label{polync}
\prod_{j=1}^r\left(x-1/c_j \right)^{n_j}\in \P_n({\R_{>0}}).
\end{equation}
Hence, by Proposition \ref{lem:preservinginterlacingMult}, the multiplicative finite free convolution of the polynomials in \eqref{interalcingLaguerre} by \eqref{polync} preserves the interlacing, so that 
\begin{equation}
   \HGF{1}{1}{-|\bm n|}{\alpha+1}{x}\boxtimes_{|\bm n|}\prod_{j=1}^r\left(x-1/c_j \right)^{n_j}\preccurlyeq \HGF{1}{1}{-|\bm n|}{\alpha+1+t}{x}\boxtimes_{|\bm n|}\prod_{j=1}^r\left(x-1/c_j \right)^{n_j},
\end{equation}
which, by \eqref{eq:factorization.LaguerreII.sec.kindBis}, is equivalent to $\mathcal   L_{\bm{n}}^{(\alpha,\bm{c})}(x)\preccurlyeq   \mathcal   L_{\bm{n}}^{(\alpha+t,\bm{c})}(x)$.
\end{proof}

\subsubsection{Zero asymptotics of Type II Multiple Laguerre polynomials of the second kind}\

Let $\bm n =(n_1, \dots, n_r)\in \N^r$ and $\alpha_{\bm n} >-1$ be such that
\begin{equation}
    \label{asymptMLIIkind2assumptions}
    \lim_{|\bm n|\to \infty} \frac{\alpha_{\bm n}}{|\bm n|}=A\ge 0,    \quad \text{and} \quad \lim_{|\bm n|\to \infty} \frac{n_i}{|\bm n|}=\theta_i>0, \qquad i=1, 2.
\end{equation}
Moreover, as in in Section \ref{sec:asymptTypeILagueresecond}, assume that $\bm c_{\bm n}=\left(c_1^{(\bm n)}, \dots, c_r^{(\bm n)}\right)\in \R_{>0}^r$, with $c_i^{(\bm n)}\neq c_j^{(\bm n)}$ for $i\neq j$, depends on the multi-index $\bm n$ in such a way that the limit
\begin{equation}
    \label{asymptMLIIkind2assumptions2}
    \lim_{\bm n} \bm c_{\bm n} = \bm c =(c_1, \dots, c_r)\in \R_{>0}^r
\end{equation}
exists, with all $c_i\neq c_j$ for $i\neq j$. Recall (see Remark \ref{rem:scalingLaguerre}) that we can also easily handle the case when $\bm c_{\bm n}$ depend linearly on $|\bm n|$.

\begin{theorem}
Under the assumptions above, function $y=u \mathcal G_\mu(u)$, where $\mathcal G_\mu$ is the Cauchy transform  of the limiting distribution of zeros of the rescaled polynomials $ \mathcal   L_{\bm{n}}^{(\alpha_{\bm n},\bm{c}_{\bm n})}(|\bm n| x)$, satisfies the equation
\begin{equation}
    \label{eq:CauchyTypeIILagKind2}
\frac{y+A}{y-1} \sum_{j=1}^r\frac{\theta_j  }{c_j u - \left( y+A\right)   }=1.
\end{equation}
\end{theorem}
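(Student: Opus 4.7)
The plan is to build on the multiplicative factorization \eqref{eq:factorization.LaguerreII.sec.kindBis} and the asymptotic multiplicativity of the $S$-transform from Theorem~\ref{prop:finiteAsymptotics.updated}~(ii). Transferring the dilation $\dil{1/|\bm n|}$ onto the $_1F_1$ factor via the identity \eqref{identityMult3a} yields the rescaled decomposition
\begin{equation*}
\mathcal L_{\bm n}^{(\alpha_{\bm n},\bm c_{\bm n})}(|\bm n|x) \simeq \HGF{1}{1}{-|\bm n|}{\alpha_{\bm n}+1}{|\bm n|x} \boxtimes_{|\bm n|} \prod_{j=1}^{r}\left(x-1/c_j^{(\bm n)}\right)^{n_j}.
\end{equation*}
Both factors are real-rooted with positive roots for all sufficiently large $\bm n$, and their normalized root distributions are uniformly compactly supported (for the $_1F_1$ factor, by the classical estimates on the zeros of $L_n^{(\alpha)}$ when $\alpha/n\to A$; for the atomic factor, by \eqref{asymptMLIIkind2assumptions2}). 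Hence Theorem~\ref{prop:finiteAsymptotics.updated}~(ii) applies and $\mathcal S_\mu=\mathcal S_{\tilde\mu_1}\,\mathcal S_{\tilde\mu_2}$, where $\tilde\mu_1,\tilde\mu_2$ are the moment limits of the two factor sequences.

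The two factor transforms are easy to identify. Formula \eqref{Sfor1F1}, applied with the denominator parameter $\alpha_{\bm n}+1$ satisfying $(\alpha_{\bm n}+1)/|\bm n|\to A$, gives $\mathcal S_{\tilde\mu_1}(z)=1/(z+A+1)$. The second sequence has normalized zero-counting measure $\sum_j (n_j/|\bm n|)\,\delta_{1/c_j^{(\bm n)}}$ converging weakly, and hence in moments, to $\tilde\mu_2=\sum_{j=1}^r \theta_j\,\delta_{1/c_j}$. From \eqref{defMtransform} one computes $\mathcal M_{\tilde\mu_2}(z)=\sum_{j=1}^r \theta_j z/(c_j-z)$, so $v:=\mathcal M_{\tilde\mu_2}^{-1}(z)$ is characterized implicitly by $\sum_j \theta_j v/(c_j-v)=z$, and by \eqref{defStranform} we get $\mathcal S_{\tilde\mu_2}(z)=(z+1)v/z$. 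Multiplying the two transforms and applying \eqref{defStranform} once more yields $\mathcal M_\mu^{-1}(z)=v/(z+A+1)$.

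It remains to convert this implicit description of $\mathcal M_\mu^{-1}$ into the announced algebraic equation for $\mathcal G_\mu$. The relation $\mathcal M_\mu(1/u)=u\,\mathcal G_\mu(u)-1$ implies that, with $y=u\,\mathcal G_\mu(u)$, we have $1/u=\mathcal M_\mu^{-1}(y-1)=v/(y+A)$; equivalently, $v=(y+A)/u$. Substituting this into $\sum_j \theta_j v/(c_j-v)=y-1$ produces
\[ \sum_{j=1}^r \frac{\theta_j\,(y+A)}{c_j\,u-(y+A)}=y-1, \]
which, after dividing by $y-1$, is exactly \eqref{eq:CauchyTypeIILagKind2}. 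The most delicate point is verifying that the zero-counting measures of both factors remain in a fixed compact set so that Theorem~\ref{prop:finiteAsymptotics.updated} genuinely applies in the setting of complex-rooted polynomials; the rest of the argument is a chain of transform manipulations entirely in the spirit of Section~\ref{sec:Stransform}.
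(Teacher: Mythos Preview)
Your argument is correct and complete. It differs from the paper's proof, however, in the choice of factorization. The paper explicitly notes that either representation in Theorem~\ref{thm:factorization.LaguerreII.sec.kind} would work, but then deliberately uses \eqref{eq:factorization.LaguerreII.sec.kind} (the mixed additive--multiplicative one) ``to illustrate how to obtain asymptotics from an expression involving both multiplicative and additive finite convolutions.'' Concretely, the paper first computes the $R$-transforms of the scaled $p_{j}$ (each a Marchenko--Pastur law), sums them to get $\mathcal R_{\nu_p}$, passes to $\mathcal S_{\nu_p}$ via \eqref{RtransformStranform}, then multiplies by $\mathcal S_{\nu_q}$ for the $_2F_1$ factor $q^{(\alpha)}$ and unwinds to $\mathcal G_\mu$.

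Your route via \eqref{eq:factorization.LaguerreII.sec.kindBis} is shorter and uses only the multiplicative half of Theorem~\ref{prop:finiteAsymptotics.updated}: the discrete factor $\sum_j\theta_j\delta_{1/c_j}$ is handled directly at the level of $\mathcal M$, and the $_1F_1$ factor is covered by \eqref{Sfor1F1}. This is precisely the approach the paper attributes to Hardy~\cite{hardy2015}. What the paper's longer proof buys is a worked template for situations where a purely multiplicative decomposition is unavailable; what yours buys is economy. One small remark: your closing caveat about ``complex-rooted polynomials'' is unnecessary here, since both factors have positive real zeros and Proposition~\ref{prop:finiteAsymptotics} already suffices.
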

We can prove this assertion using any of the representations from Theorem~\ref{thm:factorization.LaguerreII.sec.kind}. Formula  \eqref{eq:factorization.LaguerreII.sec.kindBis} yields an expression in terms of a multiplicative convolution of a Marchenko-Pastur distribution with a discrete measure supported at the points $1/c_j$; this retrieves, in particular, the result of Hardy \cite[Theorem 3.7]{hardy2015} established using random matrix theory.

Below we use \eqref{eq:factorization.LaguerreII.sec.kind} to illustrate how to obtain asymptotics from an expression involving both multiplicative and additive finite convolutions. Our result is equivalent to \cite[Corollary 3.8]{hardy2015} after some algebraic manipulations.
\begin{proof}
By the representation \eqref{eq:factorization.LaguerreII.sec.kind},
$$
  \mathcal   L_{\bm{n}}^{(\alpha_{\bm n},\bm{c}_{\bm n})}(|\bm n| x)= q^{(\alpha_{\bm n})}(x) \boxtimes_{|\bm n|} \left[ (p_1\boxplus_{|\bm n|}p_2\boxplus_{|\bm n|}\dots \boxplus_{|\bm n|} p_r)(|\bm n | x) \right],
$$
while from \eqref{identityAdd3a} it follows that
$$
\dil{\frac{1}{|\bm n|}}(p_1\boxplus_{|\bm n|}p_2\boxplus_{|\bm n|}\dots \boxplus_{|\bm n|} p_r) \simeq (\dil{\frac{1}{|\bm n|}} p_1) \boxplus_{|\bm n|}(\dil{\frac{1}{|\bm n|}} p_2)\boxplus_{|\bm n|}\dots \boxplus_{|\bm n|} (\dil{\frac{1}{|\bm n|}} p_r).
$$
Now,
$$
[\dil{\frac{1}{|\bm n|}} p_j](x)= p_j(|\bm n | x)\simeq  \HGF{1}{1}{-|\bm n|}{n_j-|\bm n|+1}{ c_j^{(\bm n)} |\bm n | x}  ,
$$
or, equivalently, by \eqref{identityMult2},  
$$
p_j(|\bm n | x)\simeq  \left(x-\frac{1}{c_j^{(\bm n)}}\right)^{|\bm n|} \boxtimes_{|\bm n|}\HGF{1}{1}{-|\bm n|}{n_j-|\bm n|+1}{|\bm n|x}, \quad j=1, \dots, r.
$$
Under assumptions \eqref{asymptMLIIkind2assumptions}--\eqref{asymptMLIIkind2assumptions2}, the normalized zero-counting measure of the first polynomial in the right-hand side tends to $\delta_{1/c_j}$, whose $S$-transform is the constant $c_j\neq 0$. Thus, applying Theorem \ref{thm:StranfHyper2F1}, we conclude that the weak-* limit $\nu_j$ of the normalized zero-counting measures of the scaled polynomials $p_j(|\bm n | x)$ is a positive probability measure, compactly supported on the real line, for which 
\begin{equation}
   \mathcal S_{\nu_j}(z)= \frac{c_j}{z+\theta_j}.
\end{equation}
From \eqref{RtransformStranform} it follows that its $R$-transform is given by, 
$$
\mathcal R_{\nu_j}(z)=\frac{\theta_j}{c_j-z}.
$$
In the terminology of the free probability, $\nu_j$ is a free Poisson (or Marchenko-Pastur) distribution of rate $\theta_j$ and jump of size $\frac{1}{c_j}$ (see \cite[Definition 12.12]{MR2266879}).

Applying \eqref{def:additiveconvolutionmeasures}, we see that the normalized zero-counting measure of $(p_1\boxplus_{|\bm n|}p_2\boxplus_{|\bm n|}\dots \boxplus_{|\bm n|} p_r)(|\bm n | x) $ converges to a measure $\nu_p$, whose $R$-transform is\footnote{\, Notice that we can identify $\nu_p$ as compound free Poisson of rate $1$ and jump distribution $ \sum_{j=1}^r \theta_j \delta_{\frac{1}{c_j}}$,  see \cite[Definition 12.16]{MR2266879}.}
\begin{equation}
\label{eq:Rtrans.MLIIsecond}
\mathcal R_{\nu_p}(z)=\sum_{j=1}^r\frac{\theta_j}{c_j-z}.
\end{equation}
Using \eqref{RtransformStranform} again, we conclude that the $S$-transform $w=S_{\nu_p}(z)$ of $\nu_p$ satisfies the algebraic equation
\begin{equation}
    \label{eq:algebraicLag1}
1=\sum_{j=1}^r\frac{\theta_j w}{c_j-wz}.
\end{equation}

On the other hand, for the polynomial
$$
   q^{(\alpha_{\bm n})}(x)\simeq  \HGF{2}{1}{-|\bm n|,\ 1}{\alpha_{\bm n}+1}{x}
$$
we can use Proposition~\ref{prop:asymptotics2F1} to assure that for its limiting zero-counting measure $\nu_q$,
\begin{equation}
   \mathcal S_{\nu_q}(z)= \frac{z+1}{z+A+1}.
\end{equation}
Since the weak-* limit $\mu$ of the normalized zero-counting measure of the scaled polynomials $ \mathcal   L_{\bm{n}}^{(\alpha_{\bm n},\bm{c}_{\bm n})}(|\bm n| x)$ is given by
$$
\mu =\nu_p\boxtimes\nu_q,
$$
we have that its $S$-transform is 
\begin{equation}
   \mathcal S_{\mu}(z)= \mathcal S_{\nu_p}(z) \mathcal S_{\nu_q}(z)=  \frac{z+1}{ z+A+1} \, \mathcal S_{\nu_p}(z).
\end{equation}
Substituting this into \eqref{eq:algebraicLag1}, we get an algebraic equation for the $S$-transform $w=S_{\mu}(z)$:
\begin{equation}
    \label{eq:algebraicLag2}
1=(z+A+1)\sum_{j=1}^r\frac{\theta_j w}{c_j(z+1)-wz(z+A+1)}.
\end{equation}
With the definition \eqref{defStranform}, we can write it as
$$
1=\frac{z+A+1}{z} \sum_{j=1}^r\frac{\theta_j \, \mathcal M_\mu^{-1}(z)}{c_j-(z+A+1)  \mathcal M_\mu^{-1}(z)}.
$$
Recalling the definition \eqref{defMtransform} of the $M$-transform in terms of the Cauchy transform  $\mathcal G_\mu$ of $\mu$, we have that
$$
z=\frac{1}{u}  \mathcal G_\mu\left(\frac{1}{u}\right)-1;
$$
replacing $u\mapsto 1/u$ we can rewrite the equation above as
$$
1=\frac{u   \mathcal G_\mu\left(u\right)+A}{u   \mathcal G_\mu\left(u\right)-1} \sum_{j=1}^r\frac{\theta_j  }{c_j u - \left(u   \mathcal G_\mu\left(u\right)+A\right)   }.
$$
Denoting $y= u \mathcal G_\mu(u)$ we arrive at equation \eqref{eq:CauchyTypeIILagKind2}.
\end{proof}

\section{Further examples}

Recent research has revealed other (although not so many) families of multiple orthogonal polynomials that can be expressed in terms of generalized hypergeometric functions, to which the methodology explained here can be applied.

For instance, in \cite{MR4429042} Type II MOP with respect to a pair of weights ($r=2$) on $[0,1]$, 
$$
w_j(x)=\mathcal{W}(x ; a, b+j-1 ; c+j-1, d), \quad j=1, 2,
$$
where
$$
\mathcal{W}(x ; a, b ; c, d)=\frac{\Gamma(c) \Gamma(d)}{\Gamma(a) \Gamma(b) \Gamma(\delta)} x^{a-1}(1-x)^{\delta-1}{ }_2 F_1\left(\begin{array}{c}
c-b, d-b \\
\delta
\end{array} ; 1-x\right), \quad \delta=c+d-a-b.
$$
Note that $\mathcal{W}(x ; a, b ; c, d)$ is a positive function on the interval $(0,1)$, whenever $a, b, c, d \in \R_{>0}$ with $c>b, d>b$, and $\delta >0$. It was shown that the corresponding polynomial $P_n(x):=P_n(x ; a, b ; c, d)$, such that
$$
\int_0^1 x^k P_n(x) \mathcal{W}(x ; a, b+j ; c+j, d) d x  \begin{cases} = 0, & \text { if } n \geq 2 k+j+1, \\   \neq 0, & \text { if } n=2 k+j,\end{cases}
$$
is hypergeometric (see \cite[Theorem 6]{MR4429042}):
$$
P_n(x) \simeq { }_3 F_2\left(\begin{array}{c}
-n, c+\left\lfloor\frac{n}{2}\right\rfloor, d+\left\lfloor\frac{n-1}{2}\right\rfloor \\
a, b
\end{array} ; x\right).
$$
This means that we could use the arguments above to establish some monotonicity and interlacing properties of the zeros of $P_n$ (all real, simple, and on the interval $(0,1)$). As for the zero asymptotics, the authors in \cite{MR4429042} observe that $P_n$'s share it with the Type II Jacobi-Piñeiro polynomials on the step line, i.e., is given by \eqref{densityJPIIDiag}.

\medskip

Another example of Type II MOP appears in \cite{MR4154921}, this time with respect to two weights on $[0,+\infty)$ from the family
$$
w(x ; a, b ; c)=\frac{\Gamma(c)}{\Gamma(a) \Gamma(b)} \mathrm{e}^{-x} x^{a-1} U(c-b, a-b+1 ; x) ,
$$
expressed in terms of the confluent hypergeometric function of the second kind, $U(\alpha, \beta ; x)$, also known as the Tricomi function: for $\Re(\alpha)>0$ and $|\arg (x)|<\frac{\pi}{2}$,    
$$
U(\alpha, \beta ; x)=\frac{1}{\Gamma(\alpha)} \int_0^{\infty} t^{\alpha-1}(t+1)^{\beta-\alpha-1} e^{-t x} \mathrm{~d} t.
$$
Then, for two weights from this family, we can define Type II MOP as follows: for $a, b, c>0$ such that $c>\max \{a, b\}>0$ and $d \in\{0,1\}$, let $P_n=P_n^{[d]}$ satisfy
 $$
\int_0^{\infty} x^k P_n^{[d]}(x) w(x ; a, b ; c+d) \,d x  \begin{cases} =0, & \text { if } n \geq 2 k+1, \\  \neq 0, & \text { if } n=2 k,\end{cases}
$$
and
$$
\int_0^{\infty} x^k P_n^{[d]}(x) w(x ; a, b ; c+1-d) \, d x \begin{cases} =0, & \text { if } n \geq 2 k+2, \\   \neq 0, & \text { if } n=2 k+1.
\end{cases}
$$
We can identify $P_n$ with Type II MOP on the step-line by defining 
$$
P_{(n, n)}(x)= P_{2 n}(x) \quad \text { and } \quad P_{(n+1, n)}(x) = P_{2 n+1}(x).
$$
From \cite[Theorem 3.1]{MR4154921},
$$
P_n^{[d]}(x) \simeq \HGF{2}{2}{-n, c+\left\lfloor\frac{n+d}{2}\right\rfloor }{
a, b}{x}. 
$$

Hypergeometric polynomials arise also when considering multiple orthogonality, this time of both Type I and Type II, with respect to an exponential integral. Namely, \cite{vanassche} considered two weights on $[0, \infty)$,
$$
w_1(x)=x^\alpha e^{-x}, \quad w_2(x)=x^\alpha E_{\nu+1}(x) ,
$$
where
$$
E_\nu(x)=\int_1^{\infty} \frac{e^{-x t}}{t^\nu} d t,
$$
$\alpha>-1$ and $\alpha+\nu>-1$.  This pair of weights was shown to form a Nikishin system on $[0, \infty)$. 

If we consider the Type I MOP for $\left(w_1, w_2\right)$ that corresponds to the multi-index $\bm n =(n_1, n_2)$, that is, a vector of $r=2$ polynomials $\left(A_{\bm{n},1}, A_{\bm{n},2}\right)$ with $\deg A_{\bm{n},j} \leq n_j-1$, for
which the function \eqref{def:typeIfunction},
$$ 
    Q_{\bm{n}}(x) \isdef \sum_{j=1}^r A_{\bm{n},j}(x)w_j(x) 
$$
is orthogonal to all polynomials of degree $\leq |\bm{n}|-2$,
$$
  \int_0^{+\infty} x^{k}  Q_{\bm{n}}(x)\, dx =   0, \qquad 0 \leq k \leq |\bm{n}|-2,
$$
then it was proved in \cite[\S 2.2]{vanassche} that, provided that for $n_1+1 \geq n_2$ and $\nu \notin \mathbb{Z}$,
$$
A_{\bm{n},2}(x)  \simeq { }_2 F_2\left(\begin{array}{c}
-n_2+1, |\bm n|+\alpha+\nu \\
\nu+1, \alpha+\nu+1
\end{array} ;-x\right).
$$
On the other hand, for the corresponding Type II MOP $P_{\bm n}$ of degree $|\bm n|$, for the two weight functions $\left(w_1, w_2\right)$, satisfying  
\begin{align*}
\int_0^{\infty} P_{\bm n}(x) x^{k} w_j(x) \, d x & =0, \quad k=0, \ldots, n_j-1, \quad j=1,2, 
\end{align*}
it is established that 
\begin{equation}
    \label{VATypeI}
P_{n, m}(x) \simeq 
{}_2 F_2\left(\begin{array}{cc}
-|\bm n|, n_2+\alpha+\nu+1 \\
\alpha+1, \alpha+\nu+1 
\end{array} ; x\right),
\end{equation}
see \cite[\S 3.2]{vanassche}.

Finally, a general class of weights for which the moment-generating functions are hypergeometric series has been considered in the recent publication \cite{wolfs}. In particular, it was shown that in this setting, Type II MOPs have the general form
\begin{equation}
    \label{WolfypeII}
{}_{p+1} F_q\left(\begin{array}{cc}
-|\bm n|, \bm n+\bm \alpha+1 \\
\bm \beta+1 
\end{array} ; x\right),
\end{equation}
and thus, can be represented as the finite free multiplicative convolution of simpler building blocks.
 
In all these examples, the corresponding MOP are suitable for analyzing their zeros using this paper's methodology. For instance, the algebraic equation for the Cauchy transform of the limiting zero distribution of the appropriately rescaled polynomials \eqref{VATypeI} (see \cite[Lemma 6]{vanassche}) or \eqref{WolfypeII} (see \cite[Theorem 2.17 and Corollary 3.15]{wolfs}) are just a straightforward application of Theorem~\ref{cor:CauchyTransf} of this paper.

\appendix

\section{Outline of the proof of Theorem~\ref{prop:finiteAsymptotics.updated}} \label{sec:appendix}

Theorem \ref{prop:finiteAsymptotics.updated} is a key result for some of our applications and it was implicitly proved in \cite{arizmendi2018cumulants, arizmendi2021finite}. The connection between the results proved in those papers and the theorem written here uses the established theory of combinatorics in free probability that can be consulted in \cite{MR2266879}. The purpose of this section is to further clarify this connection.

First, recall from \cite[Remark 3.5]{arizmendi2018cumulants} that given a polynomial $p\in \pp_n$ one can define its degree $n$ finite free cumulants $(\kappa_j^{(n)}(p))_{j=1}^n$ as the values uniquely determined by the formulas:
$$e_j(p)=\frac{\falling{n}{j}}{d^j j!} \sum_{\pi\in \mathcal{P}(k)} n^{|\pi|} \mu(0_j,\pi) \kappa^{(n)}_\pi(p).$$
where $\mathcal{P}(j)$ is family of set partitions of $\{1,\dots,k\}$, $\mu$ is the Möbius function on the lattice $\mathcal{P}(k)$ (with the reversed refinement order), $|\pi|$ is the number of blocks in the partition $\pi$, and $\kappa^{(n)}_\pi(p):=\prod_{V\in\pi} \kappa^{(n)}_{|V|}(p)$. Finite free cumulants have the remarkable property that in the limit they tend to free cumulants, see \cite[Theorem 5.4]{arizmendi2018cumulants}, since the proof only relies in the combinatorial structure, the result can be readily updated as follows:

Let $(p_n)_{n\geq 0}$ be a sequence of polynomials such that $p_n\in \pp_n$ for $n\geq 0$. And let $(r_j)_{j\geq 1}$, $(m_j)_{j \geq 1}$ two sequences of complex numbers that satisfy the formulas:
\begin{equation}
 m_k= \sum_{\pi\in \mathcal{NC}(k)} r_\pi, \qquad \text{for } k=1,2,\dots,   
\end{equation}
where $\mathcal{NC}(k)$ is family of non-crossing set partitions of $\{1,\dots,k\}$, and $r_\pi:=\prod_{V\in\pi} r_{|V|}$.
Then, we have the equivalence:
$$\lim_{n\to\infty} m_j(p_n) = m_j, \quad \text{for }j=1,2,\dots \qquad \text{ if and only if } \qquad \lim_{n\to\infty} \kappa_j^{(n)}(p_n) = r_j\quad \text{for }j=1,2,\dots $$

Notice that the only difference with \cite[Theorem 5.4]{arizmendi2018cumulants} is that we do not require that $(m_j)_{j\geq 1}$ is the sequence of moments of some measure. If this was the case, then $(r_j)_{j\geq 1}$ would be the sequence of free cumulants of the given measure. 

Moreover, from \cite[Proposition 3.6]{arizmendi2018cumulants} we know that finite free cumulants linearize the finite free additive convolution:
$$\kappa^{(n)}_j(p\boxplus_nq)=\kappa^{(n)}_j(p)+\kappa^{(n)}_j(q) \qquad \text{for }j=1,\dots,n.$$
Furthermore, \cite[Theorem 1.2]{arizmendi2021finite} asserts that finite free cumulants of the multiplicative convolution satisfy the same relation as the free cumulants of a product in the limit: 
$$\kappa^{(n)}_j(p\boxtimes_nq)=\sum_{\pi\in \mathcal{NC}(j)} \kappa^{(n)}_\pi(\mathfrak{p})\kappa^{(n)}_{Kr(\pi)}(\mathfrak{q})+O(1/n)\qquad \text{for } j=1,2,\dots, $$
where, $Kr(\pi)$ denotes the Kreweras complement of a non-crossing partition and $O(1/n)$ is a term that tends to 0 when $n\to \infty$. When turning to the limiting behaviour, the previous results imply the following:

Let $\mathfrak{p}=(p_n)_{n\geq 0}$, $\mathfrak{q}=(p_n)_{n\geq 0}$ be a sequence of polynomials such that $p_n,q_n\in \pp_n$ for $n\geq 0$. And assume that
for all $j=1,2,\dots$ it holds that
$$\alpha_j:=\lim_{n\to\infty} m_j(p_n) \in\cc, \qquad \text{and}\qquad \beta_j:=\lim_{n\to\infty} m_j(q_n) \in\cc.$$
Then for all $j=1,2,\dots$ one has that
$$\gamma_j:=\lim_{n\to\infty} m_j(p_n\boxplus_n q_n)\in\cc\qquad \text{and,}$$
$$\theta_j:=\lim_{n\to\infty} m_j(p_n\boxtimes_n q_n)\in\cc.$$
Furthermore, the sequences $(\gamma_j)_{j\geq 1}$ and $(\theta_j)_{j\geq 1}$ can be computed as follows. For $\mu=\alpha,\beta, \gamma,\theta$ we let $(r_j(\mu))_{j\geq 1}$ be the sequences such that:
\begin{equation}
\label{eq:moment.cumulants}
 \mu_k= \sum_{\pi\in \mathcal{NC}(k)} r_\pi(\mu), \qquad \text{for } k=1,2,\dots.
\end{equation}
Then
\begin{align}
r_j(\gamma)&=r_j(\alpha)+ r_j(\beta) \qquad \text{ for } j=1,2,\dots, \text{ and } \label{eq:cumulants.additive} \\ 
r_j(\theta)&=\sum_{\pi\in \mathcal{NC}(j)} r_\pi(\alpha)r_{K(\pi)}(\beta)\qquad \text{for } j=1,2,\dots, r. \label{eq:cumulants.multiplicative}
\end{align}

It is a well-known fact that the previous relations between sequences are equivalent to the formal power series relations between the R-transform, S-transform and Cauchy transform. More specifically, 
Theorem \ref{prop:finiteAsymptotics.updated} follows from the following facts:
\begin{itemize}
\item Equation \eqref{eq:moment.cumulants} is true for every $k\geq 1$ if and only if the $r_j(\mu)$ are the coefficients of the $R$-transform series $\RR_\mu$ associated to the sequence $(\mu_j)_{j\geq 1}$, see \cite[Remark 16.18]{MR2266879} 
\item The equation $\RR_\gamma(z)=\RR_\alpha(z)+ \RR_\beta(z)$ at the level of formal power series is equivalent the same equality at each coefficient, namely Equation \eqref{eq:moment.cumulants}.
\item If $(\alpha_j)_{j\geq 1}, (\beta_j)_{j\geq 1},(\theta_j)_{j\geq 1}$ are sequences such that their associated $S$-transform satisfy $S_\theta(z)=S_\alpha(z) S_\beta(z)$, this is equivalent to the coefficients of their corresponding $R$-transforms satisfying Equation \eqref{eq:cumulants.multiplicative} for all $j\geq 1$, see \cite[Corollary 18.17]{MR2266879}.
\end{itemize}

 \section*{Acknowledgments}

The first author was partially supported by Simons Foundation Collaboration Grants for Mathematicians (grant 710499).
He also acknowledges the support of the project PID2021-124472NB-I00, funded by MCIN/AEI/10.13039/501100011033 and by ``ERDF A way of making Europe'', as well as the support of Junta de Andaluc\'{\i}a (research group FQM-229 and Instituto Interuniversitario Carlos I de F\'{\i}sica Te\'orica y Computacional). 

The third author was partially supported by the Simons Foundation via Michael Anshelevich's grant. He expresses his gratitude for the warm hospitality and stimulating atmosphere at Baylor University.  

This work has greatly benefited from our discussions with several colleagues, such as Octavio Arizmendi, Amilcar Branquinho, Juan E. F. Díaz, Ulises Fidalgo, Ana Foulqui\'e, Walter Van Assche, and Thomas Wolfs. We are also grateful to Grzegorz \'Swiderski, who brought the paper \cite{hardy2015} to our attention.
 
%\bibliographystyle{abbrv}
%\bibliography{References}

\end{document}